\documentclass[11pt,a4paper]{amsart}

\usepackage[english]{babel}
\usepackage[utf8x]{inputenc}
\usepackage[T1]{fontenc}
\usepackage{todonotes}
\usepackage{tikz}

\usepackage[a4paper,top=3cm,bottom=3cm,left=3cm,right=3cm,marginparwidth=1.75cm]{geometry}

\usepackage{graphicx}
\usepackage{amsmath,amsthm,amsfonts}
\usepackage{amssymb}
\usepackage{mathtools}
\usepackage{graphicx}
\usepackage{stmaryrd}
\usepackage{scalerel}
\usepackage{mathrsfs}
\usepackage{relsize}
\usepackage{enumerate}   
\usepackage{bm,dsfont} 
\usepackage{commath}

\usepackage{varioref}
\usepackage{hyperref}
\usepackage[nameinlink, capitalize, noabbrev]{cleveref}

\theoremstyle{plain} 
\newtheorem{thm}{Theorem}[section]
\newtheorem*{theorem*}{Theorem}
\newtheorem{lemma}[thm]{Lemma}
\newtheorem{prop}[thm]{Proposition}
\newtheorem{corl}[thm]{Corollary}

\newtheorem{conv}[thm]{Convention}

\theoremstyle{definition} 
\newtheorem{defn}[thm]{Definition}

\theoremstyle{remark} 
\newtheorem{rmk}[thm]{Remark}
\linespread{1.1}

\numberwithin{equation}{section}

\DeclareMathOperator{\End}{End}   
\DeclareMathOperator{\Id}{Id}     
\DeclareMathOperator{\linspan}{span} 
\DeclareMathOperator{\tr}{tr}     
\DeclareMathOperator{\blangle}{{}_{\bullet}\langle} 
\DeclareMathOperator{\brangle}{\rangle_{\bullet}} 
\DeclareMathOperator{\bracketb}{]_{\bullet}} 
\DeclareMathOperator{\bbracket}{{}_{\bullet}[} 

\newcommand{\A}{\mathcal{A}}		
\newcommand{\B}{\mathcal{B}}		
\newcommand{\R}{\mathbb{R}}			
\newcommand{\C}{\mathbb{C}}			
\newcommand{\Z}{\mathbb{Z}}			
\newcommand{\N}{\mathbb{N}}			
\newcommand{\E}{\mathcal{E}}        
\newcommand{\K}{\mathbb{K}}

\newcommand{\GS}{\mathcal{G}}		
\newcommand{\La}{\Lambda}     
\newcommand{\la}{\lambda}     
\newcommand{\Lao}{\Lambda^{\circ}}     
\newcommand{\lao}{\lambda^{\circ}}     


\def\lhs#1#2{{_\bullet\!\!}\left\langle #1,#2\right\rangle} 
\def\rhs#1#2{\left\langle #1,#2\right\rangle\!\!{_\bullet}}	
\def\ft{S} 
\def\modan{\Phi}
\def\modsy{\Psi}
\def\modft{\Theta}

\def\tfp#1{#1 \times \widehat{#1}} 
\def\La{\Lambda} 

\usepackage[foot]{amsaddr}

\author{Are Austad}
\author{Mads S. Jakobsen}
\author{Franz Luef}
\address[Are Austad, Franz Luef]{Norwegian University of Science and Technology, Department of Mathematical Sciences, Trondheim, Norway.}
\email{are.austad@ntnu.no, mja@weibel.dk, franz.luef@ntnu.no}
\address[Mads S. Jakobsen]{Weibel Scientific A/S, Solvang 30, 3450 Allerød, Denmark}

\title{Gabor Duality Theory for Morita Equivalent $C^*$-algebras}

\begin{document}
	\maketitle
	
	\begin{abstract}
		The duality principle for Gabor frames is one of the pillars of Gabor analysis. We establish a far-reaching generalization to Morita equivalent $C^*$-algebras where the equivalence bimodule is a finitely generated projective Hilbert $C^*$-module. These Hilbert $C^*$-modules are equipped  with some extra structure and are called Gabor bimodules. We formulate a duality principle for standard module frames for Gabor bimodules which reduces to the well-known Gabor duality principle for twisted group $C^*$-algebras of a lattice in phase space. We lift all these results to the matrix algebra level and in the description of the module frames associated to a matrix Gabor bimodule we introduce $(n,d)$-matrix frames, which generalize superframes and multi-window frames. Density theorems for $(n,d)$-matrix frames are established, which extend the ones for multi-window and super Gabor frames. Our approach is based on the localization of a Hilbert $C^*$-module with respect to a trace.
	\end{abstract}
	\tableofcontents 
	\section{Introduction} 
	Hilbert $C^*$-modules are well-studied objects in the theory of operator algebras and Rieffel made the crucial observation that they provide the correct framework for the extension of Morita equivalence of rings to $C^*$-algebras. In his seminal work \cite{ri74-2} he noted that the equivalence bimodules between two $C^*$-algebras are bimodules where the left and right Hilbert $C^*$-module structures are compatible and the respective $C^*$-valued inner products satisfy an associativity condition. We are interested in the features of these equivalence bimodules from the perspective of frame theory. In \cite{frla02} the notion of standard module frame was introduced for countably generated Hilbert $C^*$-modules. Rieffel has already in \cite{ri88} observed that finitely generated equivalence bimodules may be described in terms of finite standard module frames and used it in his study of Heisenberg modules, which is a class of projective Hilbert $C^*$-modules over twisted group $C^*$-algebras. In \cite{jalu18duality} the properties of standard module frames for Heisenberg modules have been studied from the perspective of duality theory, which was motivated by the observation in \cite{lu09} that these module frames are closely related to Gabor frames for an associated Hilbert space. 

Gabor frames have some additional features not shared by wavelets and shearlets that is due to the seminal contributions \cite{dalala95,ja95,rosh97}, where they developed the duality theory of Gabor frames. 
\begin{theorem*}[Duality Theorem]
  The Gabor system $\{ e^{2\pi i\beta lt}g(t-\alpha k)\}_{k,l\in\Z}$ generated by a function $g\in L^2(\mathbb{R})$ is a frame for $L^2(\mathbb{R})$ if and only if $\{ e^{2\pi ilt/\alpha }g(t-k/\beta)\}_{k,l\in\Z}$ is a Riesz sequence for the closed span of $\{ e^{2\pi ilt/\alpha }g(t-k/\beta)\}_{k,l\in\Z}$ in $L^2 (\R)$ Here $t \in \R$.
\end{theorem*}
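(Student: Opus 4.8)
The plan is to recast the two one-sided conditions as two-sided operator bounds and then to recognize the two operators as representations, on different Hilbert spaces, of a single self-adjoint element of the twisted convolution algebra of the adjoint lattice. Write $\pi(x,\eta)=M_\eta T_x$ for the time-frequency shift, so that the given system is $\{\pi(\lambda)g:\lambda\in\Lambda\}$ with $\Lambda=\alpha\Z\times\beta\Z$ and the second system is $\{\pi(\lao)g:\lao\in\Lao\}$ with adjoint lattice $\Lao=\tfrac1\beta\Z\times\tfrac1\alpha\Z$ and covolume $|\Lambda|=\alpha\beta$. First I would translate both properties into spectral statements. The first system is a frame exactly when the frame operator
\[
S_{g,\Lambda}f=\sum_{\lambda\in\Lambda}\langle f,\pi(\lambda)g\rangle\,\pi(\lambda)g
\]
satisfies $A\,\Id\le S_{g,\Lambda}\le B\,\Id$ on $L^2(\R)$; the second system is a Riesz sequence exactly when its Gram operator $G$ on $\ell^2(\Lao)$, with entries $G_{\lao,\mu}=\langle\pi(\mu)g,\pi(\lao)g\rangle$ for $\lao,\mu\in\Lao$, satisfies $C\,\Id\le G\le D\,\Id$, since $\langle G\mathbf c,\mathbf c\rangle=\|\sum_{\lao}c_{\lao}\pi(\lao)g\|^2$.

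The bridge between the two is the \emph{Janssen representation}. Granting for the moment that both systems are Bessel, the frame operator admits the strongly convergent expansion
\[
S_{g,\Lambda}=|\Lambda|^{-1}\sum_{\lao\in\Lao}\langle g,\pi(\lao)g\rangle\,\pi(\lao),
\]
and the coefficient sequence $a=(\langle g,\pi(\lao)g\rangle)_{\lao\in\Lao}$ is precisely the datum defining the Gram operator: a short computation with the commutation relations $\pi(\lao)^{*}\pi(\mu)=\overline{c(\lao,\mu-\lao)}\,\pi(\mu-\lao)$, together with the self-adjointness $a=a^{*}$, shows that $G$ is the image $\rho(a)$ of the \emph{same} element under the left regular projective representation $\rho$ of the twisted group $C^{*}$-algebra $C^{*}(\Lao)$. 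At the same time $S_{g,\Lambda}=|\Lambda|^{-1}\phi(a)$, where $\phi$ denotes the time-frequency representation $\lao\mapsto\pi(\lao)$ on $L^2(\R)$. Thus $S_{g,\Lambda}$ and $G$ are self-adjoint images of one self-adjoint element $a$ of $C^{*}(\Lao)$ under two different $*$-representations.

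Now I would extend $\phi$ and $\rho$ to $*$-representations of $C^{*}(\Lao)$ and invoke faithfulness. The regular representation $\rho$ is isometric by construction, and the crucial point is that the time-frequency representation $\phi$ is faithful as well; this is where I expect the real work to lie. For irrational $\alpha\beta$ the algebra $C^{*}(\Lao)$ is a simple noncommutative torus and every nonzero representation is automatically faithful, but for rational $\alpha\beta$ simplicity fails and one must argue differently — here faithfulness follows from the existence of a faithful trace $\tau$ on $C^{*}(\Lao)$ preserved (after normalization) by both $\phi$ and $\rho$, which is exactly the trace-localization mechanism on which the paper is built. Granting faithfulness, $\phi$ and $\rho$ are isometric and hence spectrum preserving, so
\[
|\Lambda|\,\spec(S_{g,\Lambda})=\spec(a)=\spec(G).
\]
Consequently $A\,\Id\le S_{g,\Lambda}\le B\,\Id$ holds if and only if $|\Lambda|A\,\Id\le G\le|\Lambda|B\,\Id$, i.e. the frame bounds $A,B$ of the first system correspond to the Riesz bounds $\alpha\beta A,\alpha\beta B$ of the second, which establishes the equivalence.

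The remaining ingredient, needed to run the argument for an arbitrary $g\in L^2(\R)$ rather than for $g$ in a convenient dense subspace, is the reduction to the Bessel case. In either direction the hypothesis already supplies an upper bound (a frame and a Riesz sequence are both Bessel), and one needs that the first system is Bessel if and only if the second is, with optimal bounds related by the covolume; this Bessel duality guarantees that $a$ genuinely lies in $C^{*}(\Lao)$ so that the spectral comparison applies, and it also underwrites the convergence of the Janssen series. I would isolate this as a preliminary lemma. Beyond it, the two genuine obstacles are the faithfulness of $\phi$ in the rational case and the careful justification of the Janssen representation for merely Bessel windows; a more computational alternative fiberizes both operators via the Zak transform into finite matrix-valued multiplication operators that are fiberwise adjoints of one another, with the irrational case handled by approximation, but the $C^{*}$-algebraic argument above avoids the case distinction and is closest to the trace-localization method of the present paper.
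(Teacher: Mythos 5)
Your strategy is, at its core, the same one the paper runs: both conditions are recast as invertibility of a single self-adjoint element of the twisted group algebra of the adjoint lattice, and the transfer between $L^2(\R)$ and $\ell^2(\Lao)$ is effected by two trace-compatible, faithful representations of that algebra. In the paper's notation your element $a=(\langle g,\pi(\lao)g\rangle)_{\lao\in\Lao}$ is, up to the covolume normalization, exactly $\langle g,g\brangle\in B=C^*(\Lao,\overline{c})$; your Janssen representation is the bimodule associativity $\blangle f,g\rangle g=f\langle g,g\brangle$; and your faithfulness-and-spectrum-preservation step is \cref{Localization norm preserved} combined with \cref{Proposition: C*-subalgebra inverse closed}, as deployed in \cref{Lemma: Localization frame operator extends} and \cref{Lemma: Localization of Riesz sequence extends to Riesz sequence}. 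For windows $g$ in the Heisenberg module $E$ (in particular for $g\in S_0(\R)$) your argument is correct and is essentially the paper's proof of \cref{Theorem: Duality for (n,d)-matrix frames} specialized to $n=d=1$.

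The gap opens exactly where you push the argument to an arbitrary $g\in L^2(\R)$. Besselness of $\GS(g;\La)$ gives $a\in\ell^2(\Lao)$ and places the frame operator in the \emph{von Neumann} algebra generated by $\{\pi(\lao)\}_{\lao\in\Lao}$ (it commutes with every $\pi(\la)$, $\la\in\La$), but it does \emph{not} place $a$ in the $C^*$-algebra $C^*(\Lao,\overline{c})$: an element of a twisted group von Neumann algebra with merely square-summable Fourier coefficients generically fails to lie in the reduced group $C^*$-algebra, and unconditional convergence of the Janssen series requires condition-(A)-type hypotheses that a Bessel window need not satisfy. So the sentence ``Bessel duality guarantees that $a$ genuinely lies in $C^{*}(\Lao)$'' is the step that fails, and with it the $C^*$-level comparison $\spec(\phi(a))=\spec(\rho(a))$. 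Two repairs are available: either restrict to $g\in E$, where $\langle g,g\brangle$ is honestly an element of $B$ --- this is what the paper does, and it is why its duality theorem is stated only for module windows, with the general $L^2$ statement quoted from the classical literature --- or run the entire comparison at the von Neumann algebra level, where both operators are images of one element under normal, trace-preserving, hence isometric isomorphisms of the twisted group von Neumann algebra of $\Lao$; that recovers the full $L^2$ theorem but is no longer the localization argument of this paper. Your Zak-transform fallback handles the rational case, but ``the irrational case by approximation'' is itself a nontrivial limiting argument that would have to be supplied.
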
	
	
Due to its far-reaching implications there have been attempts to extend the duality principle to other classes of frames \cite{ab10,grly09,barbieri2015riesz,barbieri2018invariant}, see \cite{cakula05,chxizh13,chst15,chst16} for the theory of R-duals and \cite{fan2016}.
	
	Motivated by the link between the duality theory of Gabor frames and the Morita equivalence of noncommutative tori \cite{lu09,jalu18duality} we extend the duality theory of Gabor frames to module frames for equivalence bimodules between Morita equivalent $C^*$-algebras. 
The setup for our duality theory has its roots in \cite{jalu18duality} and is as follows: Let $A$ and $B$ be $C^*$-algebras where $B$ is assumed to have a unit and be equipped with a faithful finite trace $\tr_B$.
	We define a \textit{left Gabor bimodule} to be a quadruple
	\begin{equation}
	(A, B, E, \tr_B)
	\end{equation}
	where $E$ is a Morita equivalence $A$-$B$-bimodule.
We show that module frames for Gabor bimodules admit a duality theorem and by localization with respect to a trace we are able to connect these module frame statements to results on frames in Hilbert spaces. Note that in \cite{fogr05} a different notion of localization of frames was introduced which constructs frames with additional regularity, which we also establish in our general setting.
	
The main application of our duality results is a concise treatment of Gabor frames for closed cocompact subgroups of locally compact abelian phase spaces. Our general approach to duality principles has led us to the introduction of $(n,d)$-matrix Gabor frames that is a joint generalization of multi-window superframes and Riesz bases and we prove that their Gabor dual systems are $(d,n)$-matrix Gabor frames.
	
Let $G$ be a second countable LCA group and let $\La$ be a closed subgroup of $G \times \widehat{G}$. For $g \in L^2 (G \times \Z_n \times \Z_d)$, let
\begin{equation}
\GS (g; \La) := \{\pi(\la) g_{i,j} \mid \la \in \La   \}_{i\in \Z_n, j\in \Z_d}.
\end{equation}
We say $\GS(g;\La)$ is an \textit{$(n,d)$-matrix Gabor frame for} $L^2 (G)$ if the collection of time-frequency shifts $\GS(g;\La)$ is a frame for $L^2 (G \times \Z_n \times \Z_d)$. 
	Equivalently, there exists $h \in L^2 (G \times \Z_n \times \Z_d)$ such that for all $f \in L^2 (G \times \Z_n \times \Z_d)$ we have
	\begin{equation}
	f_{r,s} = \sum_{k\in \Z_d} \sum_{l \in \Z_n} \int_{\La} \langle f_{r,k} , \pi (\la)g_{l,k}\rangle_{L^2 (G)}\pi (\la)h_{l,s} d\la,
	\end{equation}
	for all $r \in \Z_n$ and $s \in \Z_d$.
We develop the theory of these $(n,d)$-matrix Gabor frames and prove a duality theorem for this novel type of frames.

Let us summarize the content of this paper. In \cref{Section: Preliminaries}	we collect some facts about $C^*$-algebras, Hilbert $C^*$-modules, 
the localization of Hilbert $C^*$-modules and finitely generated projective Hilbert $C^*$-modules. In \cref{Section: Abstract Gabor Analysis} we introduce Gabor bimodules, 
and study the case when these have a single generator in terms of module frames. We establish the analog of the duality theorem for  Gabor frames for Gabor bimodules with one generator. In \cref{Subsection: Extending to several generators} we extend all of these results to the case of finitely many generators which leads one naturally to matrix-valued extensions of the statements and definitions in the preceding section. We also prove a density theorem for module frames. In the final section, \cref{Subsection: Implications for Gabor Analysis}, we discuss applications to Gabor frames for closed subgroups of the time-frequency plane of locally compact abelian groups.

\section{Preliminaries on $C^*$-algebras and Hilbert $C^*$-modules}
	\label{Section: Preliminaries}
   We assume basic knowledge about Banach $*$-algebras, $C^*$-algebras, and of Banach modules and Hilbert $C^*$-modules. 
	In this section we collect definitions and basic facts of concepts crucial for this paper, such as positivity in $C^*$-algebras, Morita equivalence of $C^*$-algebras, and localization of Hilbert $C^*$-modules. For these topics we refer to \cite{lance1995hilbert}, \cite{rawi98}, and \cite{murphy2014}.
	
	For a $C^*$-algebra $A$ and $a \in A$, we denote by $\sigma_A (a)$ the spectrum of $a$ in $A$. We will need the following important result.
	\begin{prop}[\cite{murphy2014}, Theorem 2.1.11]
		\label{Proposition: C*-subalgebra inverse closed}
		Let $A$ be a unital $C^*$-algebra and let $B$ be a $C^*$-subalgebra of $A$ containing the unit of $A$. If $b \in B$, then $\sigma_B (b) = \sigma_A (b)$. Equivalently, if $b$ is invertible in $A$, then $b^{-1} \in B$. 
	\end{prop}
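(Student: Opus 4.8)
The plan is to prove the two displayed formulations are equivalent and then establish the nontrivial inclusion. The inclusion $\sigma_A(b) \subseteq \sigma_B(b)$ is immediate: if $b - \lambda$ is invertible in $B$ then the same inverse witnesses invertibility in $A$. Hence the whole content reduces to the reverse inclusion $\sigma_B(b) \subseteq \sigma_A(b)$, which is precisely the statement that an element of $B$ invertible in $A$ has its inverse already in $B$ (apply the inclusion with $b - \lambda$ in place of $b$). So I would fix $b \in B$ invertible in $A$ and aim to show $b^{-1} \in B$.

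First I would treat the self-adjoint case, $b = b^*$. Since $B$ is itself a C$^*$-algebra containing $b$, the spectrum $\sigma_B(b)$ is a compact subset of $\R$ and therefore has empty interior in $\C$; consequently it coincides with its own topological boundary, $\sigma_B(b) = \partial\sigma_B(b)$. I would then invoke the general Banach-algebra fact that boundary points of the spectrum are \emph{permanent singularities}: if $\lambda \in \partial\sigma_B(b)$, choose $\lambda_n \to \lambda$ with $b - \lambda_n$ invertible in $B$; the resolvent norms $\lVert (b-\lambda_n)^{-1}\rVert$ must blow up (otherwise $b-\lambda$ would be invertible), so the normalized elements $x_n = (b-\lambda_n)^{-1}/\lVert (b-\lambda_n)^{-1}\rVert \in B$ satisfy $\lVert x_n\rVert = 1$ and $(b-\lambda)x_n \to 0$. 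Thus $b - \lambda$ is a topological zero divisor and cannot be invertible in \emph{any} larger unital algebra, in particular not in $A$, giving $\lambda \in \sigma_A(b)$. This yields $\sigma_B(b) = \partial\sigma_B(b) \subseteq \sigma_A(b)$, and combined with the easy inclusion, $\sigma_B(b) = \sigma_A(b)$ for self-adjoint $b$.

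Finally I would bootstrap to arbitrary $b$ using the $*$-structure. If $b \in B$ is invertible in $A$, then $b^*b \in B$ is self-adjoint and invertible in $A$, so by the self-adjoint case $0 \notin \sigma_B(b^*b)$, i.e. $(b^*b)^{-1} \in B$. Since $b^* \in B$ as well, the identity $b^{-1} = (b^*b)^{-1}b^*$ shows $b^{-1} \in B$, which is the desired conclusion and, as noted above, is equivalent to $\sigma_B(b) = \sigma_A(b)$ for every $b \in B$.

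The routine parts are the algebraic reductions and the passage from the self-adjoint to the general case via $b^{-1} = (b^*b)^{-1}b^*$. The main obstacle, and the only genuinely analytic ingredient, is the topological-zero-divisor argument showing $\partial\sigma_B(b) \subseteq \sigma_A(b)$; the crux there is the blow-up of the resolvent norm as $\lambda_n$ approaches the boundary, which is what forces $b-\lambda$ to remain singular in the overalgebra $A$.
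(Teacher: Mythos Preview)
Your argument is correct and is essentially the standard textbook proof of spectral permanence for $C^*$-subalgebras. Note, however, that the paper does not supply its own proof of this proposition: it is simply quoted from \cite{murphy2014}, Theorem~2.1.11, and used as a black box throughout. The proof you outline---reducing to self-adjoint elements via $b^{-1}=(b^*b)^{-1}b^*$, using that $\sigma_B(b)\subset\R$ has empty interior in $\C$ and hence equals its boundary, and then invoking that boundary spectral points are topological zero divisors and thus permanent singularities---is exactly the argument given in Murphy's book, so there is nothing to compare beyond observing that you have reproduced the cited source faithfully.
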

	\begin{defn}
		Let $A$ be a unital $C^*$-algebra and let $B \subset A$ be a Banach $*$-subalgebra of $A$ with the same unit. We say that $B$ is \textit{spectral invariant} in $A$ if whenever $b \in B$ is invertible with $b^{-1} \in A$, we have $b^{-1}\in B$. 
	\end{defn}
	Now recall that a selfadjoint element $a$ in a $C^*$-algebra with $\sigma_A (a) \subset [0,\infty)$ is called \textit{positive}. We state a useful characterization of positivity.
	\begin{prop}
		\label{Proposition: Positivity in C*-alg}
		Let $A$ be a $C^*$-algebra. For $a= a^* \in A$ we have
		$\sigma_A (a) \subset [0,\infty)$ if and only if $a=b^* b$ for some $b \in A$. 
	\end{prop}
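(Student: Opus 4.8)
The plan is to prove the two implications separately; the forward direction is routine via functional calculus, while the reverse direction (that $a = b^*b$ forces $\sigma_A(a) \subset [0,\infty)$) is the substantive one. When $A$ is non-unital I pass to its unitization $\widetilde{A}$, so that the continuous functional calculus and the spectral radius formula for self-adjoint elements are available; since spectra are defined there anyway, nothing is lost.

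For the easy direction, assume $a = a^*$ and $\sigma_A(a) \subset [0,\infty)$. The function $t \mapsto \sqrt{t}$ is continuous on $\sigma_A(a)$ and vanishes at $0$, so functional calculus yields $b := a^{1/2} \in C^*(a) \subseteq A$. As $\sqrt{t}$ is real-valued, $b = b^*$, and multiplicativity of the functional calculus gives $b^*b = b^2 = a$.

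For the reverse direction, suppose $a = b^*b$; then $a = a^*$, and I use functional calculus to split $a = a_+ - a_-$ into positive and negative parts with $a_\pm \geq 0$ and $a_+ a_- = a_- a_+ = 0$, reducing the claim to $a_- = 0$. The decisive step is to introduce $c := b a_-$ and compute
\[
c^*c = a_- b^*b\, a_- = a_- a\, a_- = a_-(a_+ - a_-)a_- = -a_-^3 \leq 0 .
\]
Thus $\sigma_A(c^*c) \subset (-\infty, 0]$. Invoking the spectral symmetry $\sigma_A(c^*c)\cup\{0\} = \sigma_A(cc^*)\cup\{0\}$, I conclude $cc^* \leq 0$ as well. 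On the other hand, writing $c = h + ik$ with $h, k$ self-adjoint gives $cc^* = 2h^2 + 2k^2 - c^*c$, a sum of positive elements (squares of self-adjoint elements are positive, and $-c^*c \geq 0$), so $cc^* \geq 0$. Hence $\sigma_A(cc^*) = \{0\}$, which forces $cc^* = 0$ and therefore $c = 0$ by the C*-identity $\|c\|^2 = \|cc^*\|$. Then $a_-^3 = -c^*c = 0$, and self-adjointness of $a_-$ forces $a_- = 0$, so $a = a_+$ has $\sigma_A(a) \subset [0,\infty)$.

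The main obstacle is the spectral symmetry lemma $\sigma_A(xy)\cup\{0\} = \sigma_A(yx)\cup\{0\}$, which is precisely what transports the inequality $c^*c \leq 0$ over to $cc^*$ so that it can collide with the algebraically forced $cc^* \geq 0$. I would prove it by checking that for $\lambda \neq 0$ the element $\lambda - yx$ is invertible whenever $\lambda - xy$ is, exhibiting the inverse explicitly in terms of $(\lambda - xy)^{-1}$. Beyond this, the genuinely clever point is the choice $c = b a_-$: the remainder of the argument is a short chain of deductions from the C*-identity, the cone property of positive elements, and the spectral mapping theorem.
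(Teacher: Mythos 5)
The paper does not prove this proposition; it is stated in the preliminaries as a known background fact (it is the classical Fukamiya--Kaplansky characterization of positivity, cf.\ Theorem 2.2.5 in the Murphy reference the paper cites). Your argument is correct and is precisely that standard proof: the choice $c = b a_-$, the computation $c^*c = -a_-^3 \le 0$, the transfer to $cc^* \le 0$ via $\sigma_A(xy)\cup\{0\} = \sigma_A(yx)\cup\{0\}$, and the identity $c^*c + cc^* = 2h^2 + 2k^2$ forcing $cc^* \ge 0$ are exactly the classical steps, and the concluding deductions ($cc^*=0 \Rightarrow c=0 \Rightarrow a_-^3=0 \Rightarrow a_-=0$) are all sound. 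The only prerequisite you lean on that deserves mention is that the positive cone is closed under addition, but that fact is proved independently of (and prior to) this theorem in the standard development, so there is no circularity.
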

	Denote by $A^{+}$ the set of positive elements in the $C^*$-algebra $A$. The positive elements form a cone. In particular, if $a \in A^{+}$ then $ka \in A^{+}$ for all $k \in [0,\infty)$, and if $a_1,a_2 \in A^{+}$ then $a_1 + a_2 \in A^{+}$. We also obtain a partial order on $A^{+}$ by $a \leq b$ if and only if $b-a \in A^{+}$. Note that not all elements of $A^{+}$ are comparable, but all elements are comparable to $1_A$ in the case $A$ is unital. 
	
	Central to our results in \cref{Section: Abstract Gabor Analysis} will be the localization of a Hilbert $C^*$-module. For this we need positive linear functionals.
	\begin{defn}
		A positive linear functional on a $C^*$-algebra $A$ is a linear functional $\phi$ such that $\phi (A^{+}) \subset [0,\infty)$. If $\Vert \phi \Vert = 1$ we say $\phi$ is a \textit{state}.
	\end{defn}
	\begin{rmk}
	    If $\phi \colon A \to \C$ is a positive linear functional and $A$ is unital, it is known that $\phi$ is a state if and only if $\phi (1_A) = 1$.
	\end{rmk}

	We will denote the set of adjointable operators on the Hilbert $A$-module $E$ by $\End_A (E)$, and the set of compact module  operators by $\K(E)$. The following two results will be of great importance in our approach to duality theorems. 
	\begin{prop}[\cite{lance1995hilbert}, Proposition 1.1]
		\label{Proposition: Middle positivity}
		Let $A$ be a $C^*$-algebra. If $E$ is an inner product $A$-module and $f,g \in E$, then
		\begin{equation*}
		{}_{A}\langle g,f \rangle {}_{A}\langle f,g \rangle \leq \Vert {}_{A}\langle f,f\rangle \Vert {}_{A}\langle g,g \rangle,
		\end{equation*}
		where ${}_{A}\langle\cdot,\cdot\rangle$ is the $A$-valued inner product.
		Also, whenever $c \geq 0$ in $A$, we have $a^* c a \leq \Vert c \Vert a^*a$ for all $a \in A$. 
	\end{prop}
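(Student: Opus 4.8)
The statement has two parts, and the plan is to prove the second (auxiliary) inequality first, then deploy it together with a scalar Cauchy--Schwarz inequality coming from states to establish the module Cauchy--Schwarz inequality.

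First I would dispatch the auxiliary claim that $a^* c a \leq \Vert c \Vert a^* a$ whenever $c \geq 0$. Passing to the unitization $\widetilde{A}$ if $A$ is non-unital, the element $\Vert c \Vert 1 - c$ is self-adjoint with spectrum contained in $[0,\Vert c\Vert] \subset [0,\infty)$, hence positive; by \cref{Proposition: Positivity in C*-alg} we may write $\Vert c\Vert 1 - c = d^* d$. Then $a^*(\Vert c\Vert 1 - c)a = (da)^*(da) \geq 0$, which rearranges to the claim. Since positivity is preserved under the inclusion $A \hookrightarrow \widetilde{A}$, the inequality holds in $A$.

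For the main inequality, set $X := {}_{A}\langle g,f\rangle\,{}_{A}\langle f,g\rangle$ and $a := {}_{A}\langle g,f\rangle$, so that $X = aa^*$ is self-adjoint, as is ${}_{A}\langle g,g\rangle$. To prove $X \leq \Vert {}_{A}\langle f,f\rangle\Vert\,{}_{A}\langle g,g\rangle$ it suffices, by the standard fact that a self-adjoint element is positive if and only if every state is nonnegative on it (see \cite{murphy2014}), to show $\tau(X) \leq \Vert {}_{A}\langle f,f\rangle\Vert\,\tau({}_{A}\langle g,g\rangle)$ for every state $\tau$ on $A$. For fixed $\tau$, the form $\langle \xi,\eta\rangle_\tau := \tau({}_{A}\langle \xi,\eta\rangle)$ is a positive semidefinite sesquilinear form on $E$ (by positivity of $\tau$ and of the module inner product), so the ordinary scalar Cauchy--Schwarz inequality applies to it.

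The computation proceeds by putting $h := {}_{A}\langle g,f\rangle f = af$. Using $A$-linearity of the inner product one finds ${}_{A}\langle h,g\rangle = a\,{}_{A}\langle f,g\rangle = aa^* = X$ and ${}_{A}\langle h,h\rangle = a\,{}_{A}\langle f,f\rangle\,a^*$, and the auxiliary inequality bounds the latter by $\Vert {}_{A}\langle f,f\rangle\Vert\,aa^* = \Vert {}_{A}\langle f,f\rangle\Vert\,X$. Scalar Cauchy--Schwarz then yields $\tau(X)^2 = |\langle h,g\rangle_\tau|^2 \leq \langle h,h\rangle_\tau\,\langle g,g\rangle_\tau \leq \Vert {}_{A}\langle f,f\rangle\Vert\,\tau(X)\,\tau({}_{A}\langle g,g\rangle)$. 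Since $\tau(X) = \tau(aa^*) \geq 0$, dividing when $\tau(X) > 0$ (and observing the bound is trivial when $\tau(X) = 0$) gives the required estimate for $\tau$, and hence the inequality. The main obstacle is the self-referential appearance of $\tau(X)$ on both sides: the argument closes only because the conjugated quantity ${}_{A}\langle h,h\rangle$ is controlled by $X$ itself through the auxiliary inequality, so one must order the two parts correctly and treat the degenerate case $\tau(X)=0$ separately; minor care for non-unital $A$ is handled throughout by working in the unitization.
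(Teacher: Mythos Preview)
Your proof is correct and is essentially the standard argument from Lance's book, which is precisely the reference the paper cites for this proposition. The paper itself does not supply a proof of this statement; it records it as a preliminary result from \cite{lance1995hilbert}, so there is nothing further to compare.
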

	\begin{prop}[\cite{rawi98}, Corollary 2.22]
		\label{Proposition: Inner product positive operator}
		Let $A$ be a $C^*$-algebra. If $E$ is a Hilbert $A$-module and $T \in \End_A (E)$, then for any $f \in E$
		\begin{equation*}
		{}_{A}\langle Tf , Tf \rangle \leq \Vert T \Vert^2 {}_{A}\langle f,f \rangle
		\end{equation*}
		as elements of the $C^*$-algebra $A$, where ${}_{A}\langle\cdot,\cdot\rangle$ is the $A$-valued inner product.
	\end{prop}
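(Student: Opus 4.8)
The plan is to reduce the inequality to the positivity axiom of the Hilbert module inner product by exploiting that $\End_A (E)$ is itself a unital $C^*$-algebra, with unit the identity operator $\Id$. First I would observe that $T^* T$ is a positive element of $\End_A (E)$, being of the form $b^* b$, and that the $C^*$-identity gives $\Vert T^* T \Vert = \Vert T \Vert^2$. Since every positive element $a$ of a unital $C^*$-algebra satisfies $a \leq \Vert a \Vert \, \Id$ (all elements being comparable to the unit, as noted above), it follows that
\begin{equation*}
\Vert T \Vert^2 \Id - T^* T \geq 0
\end{equation*}
in $\End_A (E)$.

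Next, by \cref{Proposition: Positivity in C*-alg} applied inside the $C^*$-algebra $\End_A (E)$, I can write $\Vert T \Vert^2 \Id - T^* T = S^* S$ for some $S \in \End_A (E)$. The argument then proceeds by expanding the difference, using additivity and $A$-linearity of the inner product in its first slot together with the defining property of the adjoint:
\begin{align*}
\Vert T \Vert^2 \, {}_{A}\langle f,f \rangle - {}_{A}\langle Tf, Tf \rangle
&= {}_{A}\langle (\Vert T \Vert^2 \Id - T^* T) f, f \rangle \\
&= {}_{A}\langle S^* S f, f \rangle
= {}_{A}\langle Sf, Sf \rangle.
\end{align*}
The right-hand side is positive by the Hilbert module axiom ${}_{A}\langle g,g \rangle \geq 0$, and rearranging yields the claimed inequality ${}_{A}\langle Tf, Tf \rangle \leq \Vert T \Vert^2 \, {}_{A}\langle f,f \rangle$.

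The only genuinely delicate point is the two passages ${}_{A}\langle T^* T f, f \rangle = {}_{A}\langle Tf, Tf \rangle$ and ${}_{A}\langle S^* S f, f \rangle = {}_{A}\langle Sf, Sf \rangle$, which rest on the adjoint relation ${}_{A}\langle Rf, g \rangle = {}_{A}\langle f, R^* g \rangle$ together with $(S^*)^* = S$; here one must keep careful track of the left-module convention under which ${}_{A}\langle\cdot,\cdot\rangle$ is $A$-linear in the first variable, so that the scalar $\Vert T \Vert^2$ and the operator $T^*T$ both pull out of the first slot correctly. The conceptual crux is establishing that $\Vert T \Vert^2 \Id - T^* T$ is genuinely \emph{positive}, and not merely self-adjoint; this is what permits the factorization $S^* S$, and it is immediate once $\End_A (E)$ is recognized as a $C^*$-algebra so that spectral comparison with the unit via \cref{Proposition: Positivity in C*-alg} becomes available.
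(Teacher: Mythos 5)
Your proof is correct. The paper does not prove this statement at all --- it is quoted from Raeburn--Williams (Corollary 2.22) --- and your argument (comparing $T^*T$ with $\Vert T\Vert^2\Id$ inside the $C^*$-algebra $\End_A(E)$, factoring the positive difference as $S^*S$, and then invoking positivity of ${}_{A}\langle Sf,Sf\rangle$) is exactly the standard proof given in that reference, so there is nothing to add.
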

	Suppose $\phi$ is a positive linear functional on a $C^*$-algebra $B$, and that $E$ is a right Hilbert $B$-module.  We define an inner product
	\begin{equation*}
	\begin{split}
	\langle \cdot , \cdot\rangle_{\phi}: E \times E &\to \C \\
	(f,g) &\mapsto \phi (\langle g,f \rangle_{B} ),
	\end{split}
	\end{equation*}
	where $\langle \cdot , \cdot\rangle_B$ is the $B$-valued inner product. We may have to factor out the subspace
	\begin{equation*}
	N_{\phi} := \{ f \in E \mid \langle f,f \rangle_{B} = 0\},
	\end{equation*}
	and complete $E/N_{\phi}$ with respect to $\langle\cdot , \cdot\rangle_{\phi}$. This yields a Hilbert space which we will denote by $H_E$. This is known as the localization of $E$ in $\phi$. There is a natural map $\rho_{\phi} : E\to H_E$ which induces a map $\rho_{\phi} :\End_B (E) \to \mathbb{B} (H_E)$. We will focus entirely on the case in which $\phi$ is a faithful positive linear functional, that is, when $b\in B^{+}$ and $\phi (b)=0$ implies $b=0$. In that case $N_{\phi} = \{0\}$ and we have the following useful result from \cite[p. 57-58]{lance1995hilbert}.
	\begin{prop}
		\label{Localization norm preserved}
		Let $A$ be a $C^*$-algebra equipped with a faithful positive linear functional $\phi: A \to \C$, and let $E$ be a left Hilbert $A$-module. Then the map $\rho_{\phi}: \End_A (E) \to \mathbb{B}(H_E)$ is an injective $*$-homomorphism.
	\end{prop}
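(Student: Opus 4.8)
The plan is to exhibit $\rho_{\phi}$ explicitly on the dense image of $E$ and extend it by continuity, then verify the algebraic identities on that dense subspace and push them through the completion. Since $\phi$ is faithful we have $N_{\phi} = \{0\}$, so the canonical map $\rho_{\phi}\colon E \to H_E$ is injective with dense range, and the Hilbert space inner product is given by $\langle \rho_{\phi}(f), \rho_{\phi}(g)\rangle_{\phi} = \phi({}_{A}\langle f, g\rangle)$. For $T \in \End_A(E)$ I would define $\rho_{\phi}(T)$ on the dense subspace $\rho_{\phi}(E)$ by $\rho_{\phi}(T)\rho_{\phi}(f) := \rho_{\phi}(Tf)$. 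The first point to settle is that this is bounded: by \cref{Proposition: Inner product positive operator} we have ${}_{A}\langle Tf, Tf\rangle \leq \Vert T\Vert^2\, {}_{A}\langle f, f\rangle$ in $A$, and since $\phi$ is positive it is order-preserving, so applying $\phi$ yields $\langle \rho_{\phi}(Tf), \rho_{\phi}(Tf)\rangle_{\phi} \leq \Vert T\Vert^2 \langle \rho_{\phi}(f), \rho_{\phi}(f)\rangle_{\phi}$. Hence $\rho_{\phi}(T)$ is well-defined and bounded with $\Vert \rho_{\phi}(T)\Vert \leq \Vert T\Vert$ on $\rho_{\phi}(E)$, and therefore extends uniquely to an element of $\mathbb{B}(H_E)$.

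Next I would verify that $\rho_{\phi}$ is a $*$-homomorphism. Linearity and multiplicativity are immediate on $\rho_{\phi}(E)$; for instance $\rho_{\phi}(ST)\rho_{\phi}(f) = \rho_{\phi}(STf) = \rho_{\phi}(S)\rho_{\phi}(Tf) = \rho_{\phi}(S)\rho_{\phi}(T)\rho_{\phi}(f)$, and such identities extend to all of $H_E$ by density together with the boundedness just established. The $*$-property is the identity that genuinely uses the module structure: for $f, g \in E$ and $T \in \End_A(E)$, adjointability gives ${}_{A}\langle Tf, g\rangle = {}_{A}\langle f, T^*g\rangle$, so applying $\phi$ produces $\langle \rho_{\phi}(T)\rho_{\phi}(f), \rho_{\phi}(g)\rangle_{\phi} = \langle \rho_{\phi}(f), \rho_{\phi}(T^*)\rho_{\phi}(g)\rangle_{\phi}$. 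As $\rho_{\phi}(E)$ is dense, this identifies $\rho_{\phi}(T)^* = \rho_{\phi}(T^*)$, so $\rho_{\phi}$ preserves adjoints.

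Finally, injectivity is where faithfulness of $\phi$ is essential. Suppose $\rho_{\phi}(T) = 0$. Then for every $f \in E$ we have $\rho_{\phi}(Tf) = 0$ in $H_E$, that is $\phi({}_{A}\langle Tf, Tf\rangle) = 0$. Because ${}_{A}\langle Tf, Tf\rangle$ is a positive element of $A$ and $\phi$ is faithful, this forces ${}_{A}\langle Tf, Tf\rangle = 0$, and by definiteness of the $A$-valued inner product on the Hilbert module $E$ we conclude $Tf = 0$. Since $f$ was arbitrary, $T = 0$.

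I expect the substantive ingredients to be the operator inequality of \cref{Proposition: Inner product positive operator}, which is exactly what makes the boundedness estimate and hence the definition of $\rho_{\phi}(T)$ on all of $H_E$ possible, and the faithfulness of $\phi$, which is what delivers injectivity. The remaining algebraic verifications are routine, the only care needed being to confirm that each identity, first checked on the dense subspace $\rho_{\phi}(E)$, genuinely survives the passage to $H_E$ by continuity.
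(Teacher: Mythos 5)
Your proof is correct and is essentially the standard argument (the one in Lance's book, pp.~57--58, which the paper cites in lieu of giving its own proof): define $\rho_{\phi}(T)$ on the dense image of $E$, get boundedness from \cref{Proposition: Inner product positive operator} plus positivity of $\phi$, check the $*$-homomorphism identities on the dense subspace, and derive injectivity from faithfulness of $\phi$ together with definiteness of the $A$-valued inner product. No gaps; the only point worth keeping in mind is the convention mismatch between the left-module statement here and the right-module formula $\langle f,g\rangle_{\phi}=\phi(\langle g,f\rangle_B)$ given earlier in the paper, which you have handled consistently.
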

	
	The Hilbert $C^*$-modules of interest will be of a very particular form in that they will be $A$-$B$-equivalence bimodules for $C^*$-algebras $A$ and $B$. We will denote the $A$-valued inner product by $\blangle\cdot , \cdot\rangle$, and the $B$-valued inner product by $\langle\cdot , \cdot\brangle$. 
	\begin{defn}
		\label{Definition: Equivalence bimodule}
		Let $A$ and $B$ be $C^*$-algebras. A \emph{Morita equivalence bimodule between} $A$ and $B$, or an \emph{$A$-$B$-equivalence bimodule}, is a Hilbert $C^*$-module $E$ satisfying the following conditions.
		\begin{enumerate}
			\item $\overline{\blangle E,E\rangle}=A$  and $\overline{\langle E , E \brangle} = B$, where $\blangle E,E\rangle = \linspan_\C \{ \blangle f,g \rangle \mid f,g\in E\}$ and likewise for $\langle E,E\brangle$.
			\item For all $f,g \in E$, $a \in A$ and $b \in B$,
			\begin{equation*}
			\text{$\langle a f, g\brangle = \langle f, a^* g \brangle$  and $\blangle fb, g\rangle = \blangle f,gb^*\rangle$.}
			\end{equation*}
			\item For all $f,g,h \in E$,
			\begin{equation*}
			\blangle f,g \rangle h = f\langle g,h \brangle.
			\end{equation*}
		\end{enumerate}
		
		Now let $\A \subset A$ and $\B \subset B$ be dense Banach $*$-subalgebras such that the enveloping $C^*$-algebra of $\A$ is $A$, and the enveloping $C^*$-algebra of $\B$ is $B$. Suppose further there is a dense $\A$-$\B$-inner product submodule $\E \subset E$ such that the conditions above hold with $\A,\B,\E$ instead of $A,B,E$. In that case we say $\E$ is an \textit{$\A$-$\B$-pre-equivalence bimodule}.
	\end{defn}
	We will make repeated use of the following fact in the sequel without mention.
	\begin{prop}[\cite{rawi98}, Proposition 3.11]
		Let $A$ and $B$ be $C^*$-algebras and let $E$ be an $A$-$B$-equivalence bimodule. Then $\Vert \blangle f,f \rangle \Vert = \Vert \langle f,f \brangle \Vert$ for all $f\in E$.
	\end{prop}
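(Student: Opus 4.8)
The plan is to reduce the equality of norms to the single associativity identity of condition (3) of \cref{Definition: Equivalence bimodule}, together with the fact that the left $A$-action on $E$ is contractive for the right Hilbert $B$-module structure. Fix $f \in E$ and write $a := \blangle f,f\rangle \in A^{+}$ and $b := \langle f,f\brangle \in B^{+}$. Applying the associativity relation $\blangle f,g\rangle h = f\langle g,h\brangle$ with $g=h=f$ yields the key bridge between the two module structures,
\begin{equation*}
af = \blangle f,f\rangle f = f\langle f,f\brangle = fb.
\end{equation*}

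Next I would compute the $B$-valued inner product $\langle af,f\brangle$ explicitly. Using condition (2) of \cref{Definition: Equivalence bimodule} together with $a=a^*$, then the identity $af=fb$, and finally right $B$-linearity of $\langle\cdot,\cdot\brangle$ in its second slot, one obtains
\begin{equation*}
\langle af, f\brangle = \langle f, af\brangle = \langle f, fb\brangle = \langle f,f\brangle\, b = b^2.
\end{equation*}
Thus $\|b\|^2 = \|b^2\| = \|\langle af,f\brangle\|$, where the first equality uses $b\in B^{+}$.

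The heart of the argument is to bound $\|\langle af,f\brangle\|$ by $\|a\|\,\|b\|$. First, condition (2) shows that left multiplication $L_a\colon x\mapsto ax$ is an adjointable operator on the right Hilbert $B$-module $E$ with $L_a^* = L_{a^*}$, and that $a\mapsto L_a$ is a $*$-homomorphism $A\to\End_B(E)$; being a $*$-homomorphism of $C^*$-algebras it is contractive, so $\|L_a\|\le\|a\|$. The right-module analogue of \cref{Proposition: Inner product positive operator} then gives $\langle af,af\brangle\le\|L_a\|^2\langle f,f\brangle\le\|a\|^2\,b$, whence $\|\langle af,af\brangle\|\le\|a\|^2\|b\|$. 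Combining this with the Cauchy--Schwarz inequality coming from \cref{Proposition: Middle positivity} (applied to the $B$-valued inner product), I obtain
\begin{equation*}
\|b\|^2 = \|\langle af,f\brangle\| \le \|\langle af,af\brangle\|^{1/2}\,\|\langle f,f\brangle\|^{1/2} \le \|a\|\,\|b\|^{1/2}\,\|b\|^{1/2} = \|a\|\,\|b\|,
\end{equation*}
and hence $\|b\|\le\|a\|$. Since the defining conditions of an equivalence bimodule are symmetric in $A$ and $B$ — the associativity relation reads identically after interchanging the two inner products — the same computation with the roles of $a$ and $b$ exchanged (using right multiplication $R_b\colon x\mapsto xb$ in place of $L_a$) gives $\blangle fb,f\rangle = a^2$ and $\|a\|\le\|b\|$. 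Therefore $\|a\|=\|b\|$, that is, $\|\blangle f,f\rangle\| = \|\langle f,f\brangle\|$.

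I expect the only genuine obstacle to be the norm estimate $\|\langle af,af\brangle\|\le\|a\|^2\|b\|$, i.e.\ the contractivity of the left action. Every computation that stays on the $B$-side alone is circular — for instance $\|af\|=\|fb\|$ is controlled purely by $b$ — so the inequality $\|b\|\le\|a\|$ and its mirror can only enter through the cross-structure, namely that multiplication by an element of one algebra is norm-decreasing for the Hilbert-module norm of the other. Once this contractivity is in hand (a standard property of $*$-homomorphisms into $\End_B(E)$), the remainder is the elementary identity $\langle af,f\brangle=b^2$ and Cauchy--Schwarz.
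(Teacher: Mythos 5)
Your argument is correct: the identity $\langle \blangle f,f\rangle f, f\brangle = \langle f,f\brangle^2$ obtained from associativity, combined with the contractivity of the left action $a\mapsto L_a$ as a $*$-homomorphism $A\to\End_B(E)$ and Cauchy--Schwarz, gives $\|\langle f,f\brangle\|\le\|\blangle f,f\rangle\|$, and the symmetric computation gives the reverse inequality. The paper does not prove this statement but imports it from \cite{rawi98}, and your proof is essentially the standard argument given there, so there is nothing further to compare.
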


	It is a well-known result that if $E$ is an $A$-$B$-equivalence bimodule, then $B \cong \K_A (E)$ through the identification $\Theta_{f,g}\mapsto \langle f,g \brangle$. Here $\Theta_{f,g}$ is the compact module operator $\Theta_{f,g}: h \mapsto \blangle h, f\rangle g$. We make particular note of the case when $E$ is a finitely generated Hilbert $A$-module. 
	\begin{prop}
		\label{Proposition: Fin gen proj iff unital}
		Let $E$ be an $A$-$B$-equivalence bimodule. Then $E$ is a finitely generated projective $A$-module if and only if $B$ is unital.
	\end{prop}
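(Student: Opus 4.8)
The plan is to transport the whole question across the isomorphism $B \cong \K_A(E)$ recalled just above, which sends $\langle f,g\brangle$ to $\Theta_{f,g}$, where $\Theta_{f,g}(h) = \blangle h,f\rangle g$. Since this is a $C^*$-isomorphism, $B$ is unital precisely when $\K_A(E)$ is, so it suffices to prove that $\K_A(E)$ is unital if and only if $E$ is finitely generated projective over $A$. First I would pin down what the unit of $\K_A(E)$ must be: if $P$ is such a unit, then using the elementary identity $T\Theta_{f,g} = \Theta_{f,Tg}$ (valid for any $T \in \End_A(E)$ by left $A$-linearity of $T$) we get $\Theta_{f,Pg} = P\Theta_{f,g} = \Theta_{f,g}$ for all $f,g$, and evaluating at $h$ gives $\blangle h,f\rangle(Pg-g)=0$. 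Because $\overline{\blangle E,E\rangle}=A$ acts nondegenerately on $E$, this forces $Pg=g$ for all $g$, i.e. $P=\Id_E$. Thus "$\K_A(E)$ is unital'' is equivalent to "$\Id_E \in \K_A(E)$''.

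For the implication that a finitely generated projective $E$ yields $\Id_E \in \K_A(E)$, I would realize $E \cong pA^n$ for a self-adjoint projection $p \in M_n(A) \cong \End_A(A^n)$ and transport the standard generators $\delta_1,\dots,\delta_n$ of $A^n$ to $e_i := p\delta_i \in E$. These form a finite standard module frame, giving the reconstruction $h = \sum_{i} \blangle h,e_i\rangle e_i$ for every $h \in E$, which is exactly the statement $\Id_E = \sum_{i=1}^n \Theta_{e_i,e_i}$. Hence $\Id_E$ lies in $\K_A(E)$ and, being the identity of an ideal of $\End_A(E)$ that contains it, makes $\K_A(E) = \End_A(E)$ unital, so $B$ is unital.

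The substantive direction is the converse: from $\Id_E \in \K_A(E)$ I must produce a finite projective presentation of $E$. Since the finite-rank operators are dense in $\K_A(E)$, I would choose $F = \sum_{i=1}^n \Theta_{f_i,g_i}$ with $\Vert \Id_E - F\Vert < 1$; then $F = \Id_E - (\Id_E - F)$ is invertible in $\End_A(E)$ with inverse $T$ given by the Neumann series. Using $T\Theta_{f,g} = \Theta_{f,Tg}$ I would rewrite $\Id_E = TF = \sum_{i=1}^n \Theta_{f_i,Tg_i}$, which yields the exact reconstruction $h = \sum_i \blangle h,f_i\rangle (Tg_i)$. Finally I would define the $A$-module maps $\Phi \colon E \to A^n$, $\Phi(h) = (\blangle h,f_i\rangle)_i$, and $\Psi \colon A^n \to E$, $\Psi((a_i)_i) = \sum_i a_i (Tg_i)$; checking $A$-linearity from the inner product axioms, the reconstruction gives $\Psi\Phi = \Id_E$. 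Then $\Phi\Psi \in \End_A(A^n)$ is an idempotent whose range is isomorphic to $E$, exhibiting $E$ as a direct summand of the free module $A^n$, hence finitely generated projective.

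The main obstacle is precisely the upgrade from "$\Id_E$ is a norm-limit of finite-rank operators'' to "$\Id_E$ is an \emph{exact} finite sum of rank-one operators,'' which is supplied by the approximation-plus-Neumann-series argument; here one must verify that the inverse $T$ is genuinely adjointable (so that each $\Theta_{f_i,Tg_i}$ is again a legitimate rank-one operator) and that $\Phi,\Psi$ are $A$-linear rather than merely additive. The identification of the unit of $\K_A(E)$ with $\Id_E$ via nondegeneracy, which underlies the reduction, is the other point that should be stated carefully.
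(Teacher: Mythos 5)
Your proof is correct and takes essentially the same route as the paper's: both directions reduce to the identification $B \cong \K_A(E)$ and culminate in the same pair of $A$-module maps $E \to A^n \to E$ composing to $\Id_E$, exhibiting $E$ as a direct summand of $A^n$. The only substantive difference is that you obtain the exact finite reconstruction by approximating $\Id_E$ with a finite-rank operator and correcting via a Neumann-series inverse on the $\K_A(E)$ side, which makes explicit the step the paper glosses over when it asserts from density alone that one can choose $f_i, g_i$ with $\sum_i \langle f_i, g_i \brangle = 1_B$; likewise your forward direction via the standard frame of $pA^n$ is, if anything, more complete than the paper's brief argument that $\End_A(E) = \K_A(E)$ for finitely generated $E$.
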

	\begin{proof}
		Suppose first $A$ is finitely generated and projective as a Hilbert $A$-module. As $E$ is finitely generated, any $A$-endomorphism on $E$ is determined by its action on a finite set of generators. Hence $\End_A (E) = \K_A (E)$, and the former is unital. Since $B \cong \K_A (E)$, have that $B$ is unital.
		
		Conversely, we assume that $B$ is unital. As $B \cong \K_A (E)$, and the latter is an ideal in $\End_A (E)$, it follows that $\K_A (E) = \End_A (E)$. As $B$ is unital and $\langle E,E \brangle$ is dense in $B$, we can find elements $f_1 , \ldots , f_n , g_1 , \ldots , g_n \in E$ such that $\sum_{i=1}^n \langle f_i , g_i \brangle = 1_B$. The maps
		\begin{equation*}
		\begin{split}
		s: E &\to A^n \\
		h &\mapsto (\blangle h , f_i \rangle)_{i=1}^n,
		\end{split}
		\end{equation*}
		and 
		\begin{equation*}
		\begin{split}
		r: A^n &\to E\\
		(a_i)_{i=1}^n &\mapsto \sum_{i=1}^n a_i g_i,
		\end{split}
		\end{equation*}
		are $A$-module maps satisfying
		\begin{equation*}
		r\circ s (z) = \sum_{i=1}^n \blangle h,f_i\rangle g_i = \sum_{i=1}^n h \langle f_i , g_i \brangle = h \sum_{i=1}^n \langle f_i , g_i \brangle = h \cdot 1_B = h
		\end{equation*}
		for all $h \in E$. It follows that $E$ is a finitely generated projective $A$-module. 
	\end{proof}
	Note that the systems $\{f_1,...,f_n\}$ and $\{g_1,...,g_n\}$ are not necessarily $A$-linearly independent, but they still provide a reconstruction formula: $z= \sum_{i=1}^n \blangle h,f_i\rangle g_i$. Motivated by spanning sets in finite-dimensional vector spaces, also called frames, we call the system 
$\{f_1,...,f_n\}$ a {\it 
	module frame} for $E$ and $\{g_1,...,g_n\}$ is referred to as a {\it dual module frame}. The properties of 
module frames for equivalence bimodules are the main objective of this work.

The following two results concern properties of module frames consisting of a single element, though we do not formally introduce module frames until \cref{Def:Single-module-frame}. For our setup it will turn out that it is enough to consider module frames consisting of only one element, see \cref{Section: Abstract Gabor Analysis}. The results will come into play when we relate module frames and Gabor frames in \cref{Subsection: Implications for Gabor Analysis}.
\begin{lemma}
	\label{Lemma: Modular frame bounds reciprocal}
	Let $A$ be any $C^*$-algebra, and let $E$ be a (left) Hilbert $A$-module. Suppose $T \in \End_A (E)$ is such that there exist $C,D > 0$ such that
	\begin{equation}
	\label{Equation: Positive invertible operator inner product inequality}
	C \blangle f,f \rangle \leq \blangle Tf,f \rangle \leq D \blangle f,f\rangle,
	\end{equation}
	for all $f \in E$. Then $T$ is invertible, and 
	\begin{equation*}
	\frac{1}{D} \blangle f,f \rangle \leq \blangle T^{-1} f,f \rangle \leq \frac{1}{C} \blangle f,f\rangle,
	\end{equation*}
	for all $f \in E$.
\end{lemma}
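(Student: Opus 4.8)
The plan is to transport everything into the unital $C^*$-algebra $\End_A (E)$ and to read the hypotheses in \eqref{Equation: Positive invertible operator inner product inequality} as the operator inequalities $C\Id \leq T \leq D\Id$ there. The legitimacy of this translation rests on the following equivalence, which I regard as the technical heart of the argument: for any $S \in \End_A (E)$ one has $\blangle Sf,f\rangle \geq 0$ in $A$ for all $f \in E$ if and only if $S = S^*$ and $S \geq 0$ in $\End_A (E)$. Granting this, I would note that $\blangle (T - C\Id)f,f\rangle = \blangle Tf,f\rangle - C\blangle f,f\rangle \geq 0$ and $\blangle (D\Id - T)f,f\rangle = D\blangle f,f\rangle - \blangle Tf,f\rangle \geq 0$ for every $f$, so the equivalence gives $T - C\Id \geq 0$ and $D\Id - T \geq 0$; in particular $T$ is self-adjoint (a free byproduct) and $C\Id \leq T \leq D\Id$ in $\End_A (E)$.

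For the equivalence itself, the easy direction is immediate, since $S = R^*R$ gives $\blangle Sf,f\rangle = \blangle Rf,Rf\rangle \geq 0$. The hard direction is where the work lies, and I expect it to be the main obstacle. I would first recover self-adjointness by a complex polarization argument: the form $(f,g) \mapsto \blangle Sf,g\rangle - \blangle f,Sg\rangle = \blangle (S-S^*)f,g\rangle$ is sesquilinear over $\C$ and vanishes on the diagonal, because each $\blangle Sf,f\rangle$ is positive and hence self-adjoint; polarization then forces the form to vanish identically, so $S = S^*$. With $S$ self-adjoint I would decompose $S = S_+ - S_-$ by functional calculus in $\End_A (E)$, with $S_\pm \geq 0$ and $S_+ S_- = 0$, and test the hypothesis on the vector $f = S_- g$. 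Since $S S_- = -S_-^2$, this yields $0 \leq \blangle S(S_- g), S_- g\rangle = -\blangle S_-^{1/2}(S_- g), S_-^{1/2}(S_- g)\rangle \leq 0$, so $S_-^{1/2}(S_- g) = 0$ and therefore $S_-^2 g = 0$ for all $g$; hence $S_-^2 = 0$ and $S_- = 0$ by the $C^*$-identity, giving $S = S_+ \geq 0$.

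Once $C\Id \leq T \leq D\Id$ is in hand, the rest is standard in the unital $C^*$-algebra $\End_A (E)$. Because $T \geq C\Id$ with $C > 0$, the spectrum of $T$ is contained in $[C,\infty)$, so $0$ lies outside it and $T$ is invertible with $T^{-1} \in \End_A (E)$. The order-reversing property of inversion on positive invertible elements (if $0 \leq a \leq b$ with $a$ invertible, then $b^{-1} \leq a^{-1}$), applied to $C\Id \leq T$ and to $T \leq D\Id$, then yields $\tfrac{1}{D}\Id \leq T^{-1} \leq \tfrac{1}{C}\Id$. Feeding this back through the easy direction of the equivalence, applied to $T^{-1} - \tfrac{1}{D}\Id \geq 0$ and $\tfrac{1}{C}\Id - T^{-1} \geq 0$, gives $\tfrac{1}{D}\blangle f,f\rangle \leq \blangle T^{-1}f,f\rangle \leq \tfrac{1}{C}\blangle f,f\rangle$ for all $f$, which is the claim. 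The self-adjointness of $T$ and the reciprocal bounds come almost for free once the problem is phrased inside $\End_A (E)$, so the whole difficulty is concentrated in the hard direction of the equivalence above.
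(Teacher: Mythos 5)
Your proof is correct and follows essentially the same route as the paper: both translate the hypothesis into the operator inequality $C\Id_E \leq T \leq D\Id_E$ in the unital $C^*$-algebra $\End_A(E)$, invert there, and translate back. The only differences are that you supply a full proof of the equivalence between $\blangle Sf,f\rangle \geq 0$ for all $f$ and $S \geq 0$ in $\End_A(E)$ (which the paper treats as a standard fact and does not spell out), and that you deduce $\tfrac{1}{D}\Id_E \leq T^{-1} \leq \tfrac{1}{C}\Id_E$ from the order-reversing property of inversion rather than from multiplying the inequalities by the commuting positive operator $T^{-1}$ --- both steps are standard and both are correct.
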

\begin{proof}
	By \eqref{Equation: Positive invertible operator inner product inequality}, we see that $T$ is positive and invertible with $C \Id_E \leq T \leq D \Id_E$. Positivity is preserved when multiplying by positive commuting operators, so it follows that $C T^{-1} \leq T^{-1} T \leq DT^{-1}$, from which we get $\frac{1}{D} \Id_E \leq T^{-1} \leq \frac{1}{C} \Id_E$.
\end{proof}
\begin{lemma}
	\label{Lemma: Optimal frame bounds for modular frame}
	Let $A$ be any $C^*$-algebra, and let $E$ be a (left) Hilbert $A$-module. Let $T \in \End_A (E)$ be such that there exist $C,D > 0$ such that
	\begin{equation}
	C \blangle f,f \rangle \leq \blangle Tf,f \rangle \leq D \blangle f,f\rangle.
	\end{equation}
	for all $f \in E$.
	Then the smallest possible value of $D$ is $\Vert T \Vert$, and the largest possible value for $C$ is $\Vert T^{-1} \Vert^{-1}$.
\end{lemma}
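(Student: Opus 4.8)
The plan is to pass from the module-frame inequalities to operator inequalities in the unital $C^*$-algebra $\End_A(E)$ and then read off the optimal constants directly from the spectrum of $T$. As observed in the proof of \cref{Lemma: Modular frame bounds reciprocal}, the hypothesis already forces $T$ to be positive and invertible, and the inequality $C\blangle f,f\rangle \leq \blangle Tf,f\rangle \leq D\blangle f,f\rangle$ holding for all $f \in E$ is equivalent to the operator inequality $C\Id_E \leq T \leq D\Id_E$. The implication from the module-frame form to the operator form is exactly what is used there; for the converse I would only need that a positive $P \in \End_A(E)$ satisfies $\blangle Pf,f\rangle \geq 0$ for every $f$, which follows by writing $P = S^*S$ and using adjointability to get $\blangle Pf,f\rangle = \blangle Sf,Sf\rangle \geq 0$, applied to $P = T - C\Id_E$ and $P = D\Id_E - T$. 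Determining the optimal constants therefore reduces to finding the smallest $D$ with $T \leq D\Id_E$ and the largest $C$ with $C\Id_E \leq T$.

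For the upper bound I would invoke the spectral description of the order on positive elements. Since $T = T^* \geq 0$ in the $C^*$-algebra $\End_A(E)$, its spectrum satisfies $\sigma(T) \subset [0,\Vert T\Vert]$ with $\Vert T\Vert = \max \sigma(T)$. The relation $T \leq D\Id_E$ is equivalent to $\sigma(D\Id_E - T)\subset[0,\infty)$, i.e.\ to $\sigma(T)\subset(-\infty,D]$, and this holds exactly when $D \geq \Vert T\Vert$. As $T \leq \Vert T\Vert\,\Id_E$ always holds, the value $D = \Vert T\Vert$ is admissible and is the smallest possible bound.

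For the lower bound I would argue analogously, now using the invertibility of $T$. The relation $C\Id_E \leq T$ is equivalent to $\sigma(T)\subset[C,\infty)$, hence to $C \leq \min\sigma(T)$. Because $T$ is positive and invertible, $T^{-1}$ is positive with $\sigma(T^{-1}) = \{\lambda^{-1} : \lambda \in \sigma(T)\}$, so $\Vert T^{-1}\Vert = \max\sigma(T^{-1}) = (\min\sigma(T))^{-1}$ and thus $\min\sigma(T) = \Vert T^{-1}\Vert^{-1}$. Hence the largest admissible $C$ is $\Vert T^{-1}\Vert^{-1}$; that this value is attained follows by inverting the order-reversing inequality $T^{-1}\leq \Vert T^{-1}\Vert\,\Id_E$ for positive invertible operators to obtain $\Vert T^{-1}\Vert^{-1}\Id_E \leq T$.

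The spectral facts used here -- that the order on positive elements is detected by the spectrum, and the image of the spectrum under inversion -- are standard for unital $C^*$-algebras. The only point needing genuine care is the equivalence between the module-frame inequalities and the operator inequalities, in particular its converse direction; but this rests solely on the elementary observation that positive operators induce positive $A$-valued forms, so I anticipate no substantial obstacle beyond this bookkeeping.
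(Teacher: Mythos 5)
Your proposal is correct, but it reaches the optimal constants by a somewhat different mechanism than the paper. The paper's proof stays at the level of the $A$-valued forms: it uses the variational formula $\Vert T\Vert=\sup_{\Vert f\Vert=1}\Vert\blangle Tf,f\rangle\Vert$ for positive $T$ to identify the smallest $D$, and then obtains the largest $C$ by applying the same argument to $T^{-1}$ via \cref{Lemma: Modular frame bounds reciprocal}, which converts the bounds for $T$ into reciprocal bounds for $T^{-1}$. You instead reduce the module inequalities to the operator inequalities $C\Id_E\leq T\leq D\Id_E$ in the unital $C^*$-algebra $\End_A(E)$ and read off both constants from $\sigma(T)$, using $\max\sigma(T)=\Vert T\Vert$ and $\min\sigma(T)=\Vert T^{-1}\Vert^{-1}$ via spectral mapping under inversion. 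Both routes rest on the same underlying fact (that a self-adjoint $P\in\End_A(E)$ is positive if and only if $\blangle Pf,f\rangle\geq 0$ for all $f$, which the paper also uses implicitly in \cref{Lemma: Modular frame bounds reciprocal}); your version makes the equivalence with the operator order explicit and avoids the variational norm formula, at the cost of invoking slightly more spectral machinery, while the paper's version is shorter because it delegates the inversion step to the already-proved reciprocal lemma. One small remark: your final appeal to order-reversal of inversion is not needed in general form here, since $T^{-1}$ and $\Vert T^{-1}\Vert\Id_E$ commute, so the attainment of $C=\Vert T^{-1}\Vert^{-1}$ already follows from your own spectral inclusion $\sigma(T)\subset[\min\sigma(T),\infty)$.
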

\begin{proof}
	Since $T$ is positive we have $\Vert T \Vert = \sup_{\Vert f \Vert = 1} \{ \Vert\blangle Tf,f\rangle\Vert \}$. It follows that the smallest value for $D$ is $\Vert T \Vert$. By \cref{Lemma: Modular frame bounds reciprocal} we see by the same argument that the minimal value for $\frac{1}{C}$ is $\Vert T^{-1} \Vert$. Hence the largest value for $C$ is $\Vert T^{-1} \Vert^{-1}$.
\end{proof}


It is an interesting question when the property of being finitely generated projective passes to dense subalgebras and corresponding dense submodules. 
\begin{prop}
	\label{Proposition: Fin gen proj passes to subalg when spectral invariant}
	Let $E$ be an $A$-$B$-equivalence bimodule as in \cref{Definition: Equivalence bimodule}, with $B$ unital. Suppose there are dense Banach $*$-subalgebras $\A \subset A$ and $\B \subset B$, where $\B$ is spectral invariant in $B$ and has the same unit as $B$. Suppose further that $\E \subset E$ is an $\A$-$\B$-pre-equivalence bimodule. If $E$ is a finitely generated projective $A$-module, then $\E$ is a finitely generated projective $\A$-module.
\end{prop}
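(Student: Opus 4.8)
The plan is to reproduce the argument of \cref{Proposition: Fin gen proj iff unital} entirely inside the dense subalgebras: I want to produce $\A$-module maps $s \colon \E \to \A^n$ and $r \colon \A^n \to \E$ whose composite $r \circ s$ is the identity on $\E$, which exhibits $\E$ as a direct summand of the free module $\A^n$ and hence as a finitely generated projective $\A$-module. In the $C^*$-level proof the essential ingredient was a finite family with $\sum_i \langle f_i, g_i \brangle = 1_B$, available because $\langle E, E\brangle$ is dense in the unital algebra $B$. At the subalgebra level we cannot expect to hit $1_\B$ exactly, so the idea is to hit it only approximately and then correct by an inverse that we must keep inside $\B$.

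Concretely, I would first use that $\langle \E, \E\brangle$ is dense in $\B$, together with density of $\B$ in $B$ for the $C^*$-norm, to conclude that $\langle \E, \E\brangle$ is dense in $B$. Since $1_B = 1_\B$ lies in $B$, I can then choose finitely many $h_1, \dots, h_n, k_1, \dots, k_n \in \E$ such that $b := \sum_{i=1}^n \langle h_i, k_i \brangle$ satisfies $\lVert 1_B - b \rVert < 1$, whence the Neumann series shows $b$ is invertible in $B$. This is the step I expect to be the main obstacle, and the only place the hypotheses are genuinely used: $b$ lies in $\B$ and is invertible in $B$, so spectral invariance of $\B$ in $B$ guarantees that $b^{-1} \in \B$ as well.

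With $b^{-1} \in \B$ in hand, define $s \colon \E \to \A^n$ by $s(h) = (\blangle h b^{-1}, h_i \rangle)_{i=1}^n$ and $r \colon \A^n \to \E$ by $r((a_i)_{i=1}^n) = \sum_{i=1}^n a_i k_i$. Both are $\A$-linear: for $r$ this is immediate, and for $s$ it follows from $(ah)b^{-1} = a(hb^{-1})$ together with left $\A$-linearity of $\blangle \cdot, \cdot \rangle$. These maps are well defined because $hb^{-1} \in \E$ (as $\E$ is a right $\B$-module), so each entry $\blangle hb^{-1}, h_i \rangle$ lies in $\blangle \E, \E\rangle \subset \A$, while each $k_i \in \E$.

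Finally I would verify the reconstruction identity. For $h \in \E$,
\[
r \circ s (h) = \sum_{i=1}^n \blangle hb^{-1}, h_i\rangle k_i = \sum_{i=1}^n (hb^{-1}) \langle h_i, k_i \brangle = (hb^{-1}) \sum_{i=1}^n \langle h_i, k_i\brangle = h b^{-1} b = h,
\]
where the second equality is the bimodule associativity axiom $\blangle f, g\rangle h = f \langle g, h\brangle$. Thus $r \circ s = \Id_{\E}$, and exactly as in \cref{Proposition: Fin gen proj iff unital} this realizes $\E$ as a direct summand of $\A^n$ (the range of the idempotent $s \circ r$ on $\A^n$), so $\E$ is finitely generated and projective over $\A$. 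Apart from the spectral invariance step, the only minor care needed is the chaining of the two density statements to ensure that $b$ can be found within $C^*$-distance $1$ of the unit.
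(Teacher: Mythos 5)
Your proposal is correct and is exactly the adaptation the paper has in mind: the paper's proof is only a pointer to \cite[Proposition 3.7]{ri88} and to the second half of the proof of \cref{Proposition: Fin gen proj iff unital}, and your argument (approximate $1_B$ in $C^*$-norm by a finite sum $b=\sum_i \langle h_i,k_i\brangle$ with $h_i,k_i\in\E$, invert $b$ in $B$ by the Neumann series, use spectral invariance to get $b^{-1}\in\B$, and then correct the maps $r,s$ so that $r\circ s=\Id_{\E}$) is precisely that adaptation. All the individual steps check out, including $hb^{-1}\in\E$, the $\A$-linearity of $s$, and the use of the associativity axiom in the reconstruction identity.
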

\begin{proof}
	The latter part of the proof of \cref{Proposition: Fin gen proj iff unital} can be adapted to this situation. The full proof can be found in \cite[Proposition 3.7]{ri88}.
\end{proof}
Since we aim to mimic the situation of Gabor analysis, which we will treat in \cref{Subsection: Implications for Gabor Analysis}, the positive linear functional which we localize our Morita equivalence bimodule with respect to will have a particular form. In particular it will be a faithful trace. For unital Morita equivalent $C^*$-algebras $A$ and $B$ Rieffel showed in \cite{ri81c} that there is a bijection between non-normalized finite traces on $A$ and non-normalized finite traces on $B$ under which to a trace $\tr_B$ on $B$ there is an associated trace $\tr_A$ on $A$ satisfying
\begin{equation}
\label{Equation: Trace relation}
\tr_A (\blangle f,g \rangle ) = \tr_B (\langle g,f \brangle)
\end{equation}
for all $f,g \in E$. Here $E$ is the Morita equivalence bimodule. We will in the sequel almost always consider $A$ or $B$ unital, and so instead we will suppose the existence of a finite faithful trace on one $C^*$-algebra (the unital one) and induce a possibly unbounded trace on the other $C^*$-algebra. 
The following was proved in \cite[Proposition 2.7]{AuEn19} and ensures that this procedure works.
\begin{prop}\label{prop:bimodule_localization}
	Let $E$ be an $A$-$B$-equivalence bimodule, and suppose $\tr_B$ is a faithful finite trace on $B$. 
	Then the following hold:
	\begin{enumerate}
		\item [i)] 
		There is a unique lower semi-continuous trace on $A$, denoted $\tr_A$, for which
		\begin{equation}
		\tr_A(\lhs{f}{g}) = \tr_B(\rhs{g}{f}) \label{eq:trace_compatible_unbounded}
		\end{equation}
		for all $f,g \in E$. Moreover, $\tr_A$ is faithful, and densely defined since it is finite on $\linspan \{ \lhs{f}{g} : f,g \in E \}$. Setting
		\begin{equation}
		\langle f,g \rangle_{\tr_A} = \tr_A(\lhs{f}{g}), \quad \langle f,g \rangle_{\tr_B} = \tr_B (\langle g,f\brangle), \label{eq:inner_product_unbounded}
		\end{equation}
		for $f,g \in E$ defines inner products on $E$, with $\langle f,g \rangle_{\tr_A} = \langle f,g \rangle_{\tr_B}$ for all $f,g \in E$. Consequently, the Hilbert space obtained by completing $E$ in the norm $\| f \|' = \tr_A(\lhs{f}{f})^{1/2}$ is just the localization of $E$ with respect to $\tr_B$.
		\item [ii)] If $E$ and $F$ are equivalence $A$-$B$-bimodules, then every adjointable $A$-linear operator $E \to F$ has a unique extension to a bounded linear operator $H_{E} \to H_{F}$. Furthermore, the map $\End_A(E,F) \to \End(H_{E},H_{F})$ given by sending $T$ to its unique extension is a norm-decreasing linear map of Banach spaces. Finally, if $E = F$, the map $\End_A(E) \to \mathbb{B}(H_{E})$ is an isometric $*$-homomorphism of $C^*$-algebras.
	\end{enumerate}
\end{prop}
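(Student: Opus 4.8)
The strategy is to build $\tr_A$ first on the dense two-sided ideal $I=\linspan\{\lhs{f}{g}\mid f,g\in E\}\subset A$ by setting $\tr_A(\lhs{f}{g})=\tr_B(\rhs{g}{f})$ and extending linearly, and only then to promote it to a lower semicontinuous trace on all of $A^{+}$. The heart of part i) is the \emph{well-definedness} of this assignment: one must show that $\sum_i\lhs{f_i}{g_i}=0$ forces $\sum_i\tr_B(\rhs{g_i}{f_i})=0$. I would use that $A$, being full, has an approximate identity built from positive elements of $I$ of the form $u=\sum_j\lhs{h_j}{h_j}$ which also acts as an approximate identity on $E$, so $u_\lambda f\to f$ for all $f\in E$. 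For such a $u$ and $a=\sum_i\lhs{f_i}{g_i}$, the associativity axiom $\lhs{f}{g}h=f\rhs{g}{h}$ together with the cyclicity of $\tr_B$ rewrites $\sum_i\tr_B(\rhs{g_i}{u f_i})$ as $\sum_j\tr_B(\rhs{h_j}{a\,h_j})$; setting $a=0$ kills the right-hand side, and letting $u=u_\lambda\to 1_A$ gives $\sum_i\tr_B(\rhs{g_i}{f_i})=\lim_\lambda\sum_i\tr_B(\rhs{g_i}{u_\lambda f_i})=0$ by continuity of $\tr_B$ and of the inner products. This is the step I expect to be the main obstacle; everything else is comparatively mechanical.

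The same rearrangement shows $\tr_A$ is tracial on generators, since both $\tr_A(\lhs{f}{g}\lhs{f'}{g'})$ and $\tr_A(\lhs{f'}{g'}\lhs{f}{g})$ collapse to $\tr_B(\rhs{g'}{f}\rhs{g}{f'})$ after using cyclicity of $\tr_B$; and $\tr_A$ is positive on $I^{+}$ because for $a=c^{*}c$ one has $\rhs{h}{a h}=\rhs{ch}{ch}\ge 0$. I would then extend $\tr_A$ to $A^{+}$ by the standard recipe $\tr_A(a)=\sup\{\tr_A(x)\mid x\in I^{+},\,x\le a\}\in[0,\infty]$, which is lower semicontinuous, agrees with the original functional on $I$, and is unique among lower semicontinuous traces obeying \eqref{eq:trace_compatible_unbounded} since any such trace is pinned down on the dense ideal $I$. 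Faithfulness follows by a bootstrapping argument: if $a\ge 0$ with $\tr_A(a)=0$, then \cref{Proposition: Middle positivity} gives $\lhs{a^{1/2}f}{a^{1/2}f}=a^{1/2}\lhs{f}{f}a^{1/2}\le\|\lhs{f}{f}\|\,a$, so $\tr_A(\lhs{a^{1/2}f}{a^{1/2}f})=0$; as this equals $\tr_B(\rhs{a^{1/2}f}{a^{1/2}f})$ and $\tr_B$ is faithful, $a^{1/2}f=0$ for every $f$, and fullness of $E$ then forces $a^{1/2}=0$, i.e.\ $a=0$. Finally, $\langle f,g\rangle_{\tr_A}=\tr_A(\lhs{f}{g})=\tr_B(\rhs{g}{f})=\langle f,g\rangle_{\tr_B}$ is immediate from \eqref{eq:trace_compatible_unbounded}, faithfulness makes both positive definite so $N_{\tr_B}=\{0\}$, and hence completing $E$ in $\|f\|'=\tr_A(\lhs{f}{f})^{1/2}$ produces exactly the localization of $E$ with respect to $\tr_B$.

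For part ii) the boundedness of the extension is the only real input. Given an adjointable $T\colon E\to F$, the operator inequality (\cref{Proposition: Inner product positive operator}, valid for adjointable maps between modules) gives $\lhs{Tf}{Tf}\le\|T\|^{2}\lhs{f}{f}$ in $A$, and applying the monotone trace $\tr_A$, finite on $I$, yields $\|Tf\|'^{2}\le\|T\|^{2}\|f\|'^{2}$. Thus $T$ is $\|\cdot\|'$-bounded with norm at most $\|T\|$ and extends uniquely to $\widetilde T\colon H_E\to H_F$ with $\|\widetilde T\|\le\|T\|$; uniqueness and linearity are forced by the density of $E$ in $H_E$, giving the norm-decreasing linear map $\End_A(E,F)\to\End(H_E,H_F)$. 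When $E=F$, multiplicativity $\widetilde{ST}=\widetilde S\,\widetilde T$ holds on the dense subspace and the adjoint relation follows from $\langle\widetilde T f,g\rangle=\tr_A(\lhs{Tf}{g})=\tr_A(\lhs{f}{T^{*}g})=\langle f,\widetilde{T^{*}}g\rangle$ via adjointability, so $\rho_{\tr_A}\colon\End_A(E)\to\mathbb{B}(H_E)$ is a $*$-homomorphism. It is injective: $\widetilde T=0$ gives $\tr_A(\lhs{Tf}{Tf})=0$ for all $f$, whence $Tf=0$ by the faithfulness above (which on these elements is just faithfulness of $\tr_B$), so $T=0$. Since an injective $*$-homomorphism of $C^{*}$-algebras is automatically isometric, the map is isometric, completing the proof; this last step is the weight-valued analogue of \cref{Localization norm preserved}.
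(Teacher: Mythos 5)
The paper does not actually prove this proposition: it is imported verbatim from \cite[Proposition~2.7]{AuEn19}, so there is no in-text argument to compare against. Your reconstruction is correct and follows the standard route (which is essentially that of the cited reference): define $\tr_A$ on the dense ideal $I=\linspan\{\lhs{f}{g}\}$ via the compatibility formula, verify well-definedness and traciality by combining the associativity axiom $\lhs{f}{g}h=f\rhs{g}{h}$ with cyclicity of $\tr_B$ and an approximate unit of the form $\sum_j\lhs{h_j}{h_j}$, and then extend to a lower semicontinuous trace on $A^+$. You correctly identify well-definedness as the one nontrivial step, and your identity $\sum_i\tr_B(\rhs{g_i}{u f_i})=\sum_j\tr_B(\rhs{h_j}{a h_j})$ does the job. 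The only places where you lean on standard but nontrivial facts without detail are (a) the existence of an approximate unit of $A$ of that special form (standard for full Hilbert modules, via polarization $I=\linspan\{\lhs{h}{h}\}$), and (b) the additivity and lower semicontinuity of the supremum extension $\tr_A(a)=\sup\{\tr_A(x):x\in I^+,\,x\le a\}$ together with the density argument $\tau(a)=\lim_\lambda\tau(u_\lambda^{1/2}au_\lambda^{1/2})$ that pins down uniqueness; both are covered by the classical theory of densely defined traces. Part ii) is exactly as you say: \cref{Proposition: Inner product positive operator} gives boundedness of the extension, and injectivity plus the automatic isometry of injective $*$-homomorphisms gives the final claim, consistent with \cref{Localization norm preserved}.
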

 If both $C^*$-algebras are unital then the induced trace is also a finite trace as in \cite{ri81c}, see \cite[p. 8]{AuEn19}. 
 
 \begin{conv}
 We have the following as a standing assumption for the rest of the manuscript unless otherwise specified: Suppose we have a faithful trace $\tr_B$ on a unital $C^*$-algebra $B$, and an $A$-$B$-equivalence bimodule $E$. If $A$ is a unital $C^*$-algebra, we pick the normalization of $\tr_B$ such that $\tr_A$ becomes a state. That is, we normalize the trace on the $C^*$-algebra on the left in the Morita equivalence.
 \end{conv}


\section{ Gabor bimodules}
\label{Section: Abstract Gabor Analysis}
\subsection{The single generator case}
\label{Subsection: The Single Generator Case}
Throughout this section we discuss properties of equivalence bimodules of the following type.
\begin{defn}
	\label{Definition: Gabor bimodule}
Let $A$ and $B$ be $C^*$-algebras where $B$ is assumed to have a unit and is equipped with a faithful finite trace $\tr_B$.
	We define a \textit{left Gabor bimodule} to be a quadruple
	\begin{equation}
	\label{Equation: Gabor bimodule}
	(A, B, E, \tr_B)
	\end{equation}
    where $E$ is an $A$-$B$-equivalence bimodule.
	 
We define a \textit{right Gabor bimodule} analogously, that is, it is a quadruple
    \begin{equation}
	(A, B, E, \tr_A)
	\end{equation}
	where $A$ is unital, $\tr_A$ is a faithful finite trace on $A$, and $E$ is an $A$-$B$-equivalence bimodule.
\end{defn}
\begin{rmk}
    By \cref{prop:bimodule_localization} we may always induce a possibly unbounded trace $\tr_A$ on $A$ given a left Gabor bimodule $(A, B, E, \tr_B)$. Indeed, this will be of great importance in the sequel, and we will use $\tr_A$ without mentioning that it is induced by $\tr_B$. Likewise with $\tr_B$ if we treat the case of right Gabor bimodules.
\end{rmk}
We are also interested in Gabor bimodules possessing some regularity, see \cref{Subsection: Implications for Gabor Analysis}.
\begin{defn}
	A \textit{left Gabor bimodule with regularity} is a 
	septuple
	\begin{equation}
	\label{Equation: Gabor bimodule with regularity}
	(A, B, E, \tr_B , \A, \B, \E),
	\end{equation}
	such that
	\begin{enumerate}
		\item $(A, B, E, \tr_B)$ is a left Gabor bimodule.
		\item $\A \subset A$ and $\B \subset B$ are dense Banach $*$-subalgebras.
		\item $\E \subset E$ is an $\A$-$\B$-pre-equivalence bimodule.
		\item $\B$ is spectral invariant in $B$ with the same unit.
	\end{enumerate}
	We define a \textit{right Gabor bimodule with regularity} analogously. 
\end{defn}
The rest of this section will be devoted to exploring properties of Gabor bimodules, mostly left Gabor bimodules. 
In \cref{Subsection: Implications for Gabor Analysis} we show that Gabor bimodules over twisted group $C^*$-algebras for LCA groups are Rieffel's Heisenberg modules and provide a different approach to Gabor analysis. We start with some basic definitions from \cite{frla02}. We restrict to a single generator in this section, and extend the results to finitely many generators in \cref{Subsection: Extending to several generators}. Indeed, we will see in that section that even the case of finitely many generators can be reduced to the case of a single generator of an associated Morita equivalence bimodule. 
\begin{defn}
	\label{Definition: Modular synthesis and analysis}
	For $g\in E$ we define the \textit{analysis operator} by
	\begin{equation}
	\begin{split}
	\modan_{g}: E &\to A \\
	f &\mapsto \blangle f,g\rangle,
	\end{split}
	\end{equation}
	and the \textit{synthesis operator}:
	\begin{equation}
	\begin{split}
	\modsy_{g} : A &\to E\\
	a &\mapsto a\cdot g.
	\end{split}
	\end{equation}
\end{defn}
	An elementary computation shows that $\modan_{g}^* = \modsy_{g}$.

\begin{rmk}
	As $E$ is an $A$-$B$-bimodule, we could just as well have defined the analysis operator and the synthesis operator with respect to the $B$-valued inner product. Indeed we will need this later, but it will then be indicated by writing $\modan_g^{B}$. Unless otherwise indicated the analysis operator and synthesis operator will be with respect to the left inner product module structure.
\end{rmk}
\begin{defn}
	\label{Definition: Modular frame operator}
	For $g,h \in E$ we define the \textit{frame-like operator} $\modft_{g,h}$ to be
	\begin{equation}
	\begin{split}
	\modft_{g, h} : E &\to E \\
	f &\mapsto  \blangle f,g \rangle h.
	\end{split}
	\end{equation}
	In other words, $\modft_{g,h} = \modsy_{h} \modan_{g} = \modan_{h}^*\modan_{g}$. The \textit{frame operator of} $g$ is the operator
	\begin{equation}
	\begin{split}
	\modft_{g} := \modft_{g, g} = \modan_{g}^* \modan_{g}: E &\to E \\
	f &\mapsto  \blangle f,g \rangle g.
	\end{split}
	\end{equation}
\end{defn}
\begin{rmk}
	The module frame operator $\modft_{g}$ is a positive operator since $\modft_{g} = \modan_{g}^* \modan_{g}$.
\end{rmk}
\begin{defn}
\label{Def:Single-module-frame}
 We say $g\in E$ generates a \textit{(single) module frame for $E$} if $\modft_{g}$ is an invertible operator $E\to E$. Equivalently, there exist constants $C,D >0$ such that
	\begin{equation}
	\label{Equation: Modular frame inequality}
	C\blangle f,f \rangle \leq  \blangle f,g \rangle \blangle g ,f\rangle \leq D\blangle f,f \rangle,
	\end{equation}
	holds for all $f \in E$. 
\end{defn}
\begin{rmk}
	When $\{g\}$ is a module frame for $E$, $\modft_{g}$ is a positive invertible operator on $E$.
\end{rmk}
What follows will largely be a study of $g$ and $h$ in $E$ such that $\modft_{g, h}$ is invertible, and how this relates to inner product inequalities for a localization $H_E$ of $E$. Our interest in this question is due to the fact that in certain cases module frames can be localized to obtain Hilbert space frames, see \cref{Subsection: Implications for Gabor Analysis}. We begin with a result which generalizes the Wexler-Raz biorthogonality condition for Gabor frames, which we also look at in \cref{Subsection: Implications for Gabor Analysis}.
\begin{prop}[Wexler-Raz for Gabor modules]
	\label{Proposition: Gabor module Wexler-Raz}
	Let $g,h$ in $E$. Then $f= \modft_{g,h} f = \modft_{h,g} f$ for all $f \in E$ if and only if $B$ is unital and $ \langle g,h\brangle = \langle h,g\brangle= 1_B$.
\end{prop}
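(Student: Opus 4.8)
The plan is to convert the two operator identities into a single algebraic statement inside $B$ via the associativity axiom, and then to exploit the density $\overline{\langle E, E\brangle} = B$ together with the $C^*$-involution to recognize a genuine unit.

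First I would rewrite the frame-like operators. By condition (3) of \cref{Definition: Equivalence bimodule} we have $\blangle f, g\rangle h = f\langle g, h\brangle$ for all $f,g,h \in E$, so for every $f \in E$,
\[
\modft_{g,h} f = \blangle f, g\rangle h = f\langle g, h\brangle, \qquad \modft_{h,g} f = \blangle f, h\rangle g = f\langle h, g\brangle .
\]
Thus the hypothesis $f = \modft_{g,h} f = \modft_{h,g} f$ for all $f$ is precisely
\[
f\langle g, h\brangle = f \quad\text{and}\quad f\langle h, g\brangle = f \qquad\text{for all } f \in E .
\]
Writing $b = \langle g, h\brangle$ and noting $\langle h, g\brangle = b^*$, it is enough to analyze the relation $fb = f$ on $E$.

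Next I would promote this relation to all of $B$. For arbitrary $x, y \in E$, the compatibility of the right action with the $B$-valued inner product gives $\langle x, y\brangle b = \langle x, yb\brangle$, and since $yb = y$ this equals $\langle x, y\brangle$. Hence $\beta b = \beta$ for every $\beta$ in $\linspan\{\langle x, y\brangle : x,y \in E\}$, which is dense in $B$; continuity of right multiplication then yields $\beta b = \beta$ for all $\beta \in B$, so $b$ is a right unit. Taking adjoints shows $b^*$ is a left unit, and then $b = b^*b = b^*$ forces $b$ to be a self-adjoint two-sided unit. Therefore $B$ is unital with $1_B = b = \langle g, h\brangle$, and consequently $\langle h, g\brangle = b^* = 1_B$. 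The converse is immediate: if $B$ is unital with $\langle g, h\brangle = \langle h, g\brangle = 1_B$, then $\modft_{g,h} f = f1_B = f$ and likewise $\modft_{h,g} f = f$.

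The step I expect to be the main obstacle is this passage from a relation valid only after acting on module elements to an honest unit of $B$: the density of the inner products is what spreads $fb = f$ across all of $B$, while the $C^*$-involution is what upgrades the resulting one-sided unit into a two-sided one (and hence forces $B$ to be unital in the first place). Conceptually the whole equivalence can also be read off from the isomorphism $B \cong \K_A(E)$, $\langle g, h\brangle \leftrightarrow \Theta_{g,h} = \modft_{g,h}$; the hypothesis then says $\Id_E \in \K_A(E)$, and since $\K_A(E)$ is an ideal in $\End_A(E)$ this forces $\K_A(E) = \End_A(E)$ to be unital, whence $B$ is unital with $\langle g, h\brangle$ corresponding to $1_B$.
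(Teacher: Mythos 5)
Your proof is correct, but the forward direction takes a genuinely different route from the paper's. The paper first establishes that $B$ is unital by an indirect argument: since $E$ is generated by $g$, it is finitely generated projective over $A$, hence $B \cong \K_A(E) = \End_A(E)$ is unital; only then does it use faithfulness of the $B$-action on $E$ to identify $\langle g,h\brangle$ with the (already known to exist) unit $1_B$. You instead produce the unit directly: from $f\langle g,h\brangle = f$ on $E$ and the compatibility $\langle x, y\brangle b = \langle x, yb\brangle$ you get that $b = \langle g,h\brangle$ is a right unit on the dense subalgebra $\linspan\langle E,E\brangle$, hence on all of $B$, and the involution upgrades it to a self-adjoint two-sided unit via $b = b^*b = b^*$. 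Your argument is more elementary and self-contained — it needs only the Hilbert-module axioms and density, not \cref{Proposition: Fin gen proj iff unital} or the identification $B \cong \K_A(E)$ — and it yields unitality and the identity $\langle g,h\brangle = 1_B$ in one stroke rather than two. The paper's route, on the other hand, fits the proposition into the recurring finitely-generated-projective theme that the rest of the section exploits. Your closing remark about $\Id_E \in \K_A(E)$ forcing $\K_A(E) = \End_A(E)$ is essentially the paper's argument, so you have in effect recorded both proofs.
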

\begin{proof} 
	Suppose $f= \modft_{h,g} f = \modft_{g,h} f$ for all $f \in E$. Then $E$ is a finitely generated projective $A$-module as it is generated by $g$, and we can use $g$ and $h$ to make the maps $r$ and $s$ from the proof of \cref{Proposition: Fin gen proj iff unital}. Hence $B \cong \K_A (E) = \End_A (E)$, and as $\End_A (E)$ is unital, we deduce $B$ is unital. By Morita equivalence
	\begin{equation*}
	f= \modft_{g,h} f = \blangle f,g \rangle h = f\langle g , h \brangle = f  \langle g , h \brangle 
	\end{equation*}
	for all $f \in E$. Since $B$ acts faithfully on $E$ we deduce $\langle g , h\brangle  = 1_B$. Then also $$
	\langle h , g \brangle  =  \langle g , h \brangle^* = 1_B^* = 1_B.
	$$ 
	
	Conversely, suppose $B$ is unital and $\langle g , h \brangle =  \langle h , g \brangle = 1_B$. Then
	\begin{equation*}
	f = f 1_B = f  \langle g , h \brangle = \blangle f,g \rangle h = \modft_{g, h} f,
	\end{equation*}
	and
	\begin{equation*}
	f = f1_B = f  \langle h , g \brangle = \blangle f, h \rangle g = \modft_{h, g} f,
	\end{equation*}
	which finishes the proof.
\end{proof}
The following result showcases a duality between certain $A$-submodules of $E$ and $B$-submodules of $E$, which is very particular to our setting of Morita equivalence bimodules.
\begin{prop}
	\label{Proposition: Dual atom reconstruction subspaces} 
	For any $g,h \in E$ the following two statements are equivalent. 
	\begin{enumerate}
		\item[(i)] $\blangle f, h \rangle g = f$ for all $f \in \overline{Ag}$.
		\item[(ii)] $f = g \langle h,f\brangle$ for all $f \in \overline{gB}$.
	\end{enumerate}
\end{prop}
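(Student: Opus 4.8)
The plan is to show that conditions (i) and (ii) are \emph{both} equivalent to a single self-reproducing identity for $g$, namely
$\blangle g,h\rangle g = g$, which by the associativity axiom in \cref{Definition: Equivalence bimodule}(3) is literally the same element as $g\langle h,g\brangle$. Once both conditions are collapsed to this common statement, their equivalence is immediate.

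First I would record two preliminary facts about the equivalence bimodule $E$: the $A$- and $B$-actions are nondegenerate, and consequently $g \in \overline{Ag}$ and $g \in \overline{gB}$. The clean route is the standard one-sided approximate-unit computation: setting $b_n = (\langle g,g\brangle + \tfrac1n)^{-1}\langle g,g\brangle$, which lies in $B$ by continuous functional calculus, one checks that $\langle g - gb_n, g - gb_n\brangle = (1-b_n)^*\langle g,g\brangle(1-b_n)$ has norm tending to $0$, so $gb_n \to g$ and hence $g \in \overline{gB}$; the symmetric computation with the $A$-valued inner product gives $g \in \overline{Ag}$. These memberships are exactly what lets me feed $f=g$ into either hypothesis.

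Next I would translate each condition. Using associativity in the orientation $\blangle f,h\rangle g = f\langle h,g\brangle$, condition (i) reads $f\langle h,g\brangle = f$ for all $f \in \overline{Ag}$; specializing to $f=g$ yields $g\langle h,g\brangle = g$, and conversely, if $g\langle h,g\brangle = g$ then for $f = ag$ one has $\blangle ag,h\rangle g = a\,\blangle g,h\rangle g = ag = f$, which extends to all of $\overline{Ag}$ by continuity of $f \mapsto \blangle f,h\rangle g$. Thus (i) is equivalent to $g\langle h,g\brangle = g$. The identical bookkeeping handles (ii): feeding $f=g$ into (ii) gives $g\langle h,g\brangle = g$ outright, while the converse uses $g\langle h,gb\brangle = (g\langle h,g\brangle)b = gb$ together with continuity on $\overline{gB}$. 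Chaining the equivalences closes the argument, since $\blangle g,h\rangle g = g\langle h,g\brangle$ makes the two central identities the same.

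The genuinely substantive point — the main obstacle if one is careless — is justifying $g \in \overline{Ag}$ and $g \in \overline{gB}$; everything else is a short manipulation with the associativity axiom and a density/continuity argument. The only other thing to watch is the exact orientation of the inner-product linearities (conjugate-linear on opposite sides), so that the associativity relation is applied with $g$ and $h$ in the correct slots.
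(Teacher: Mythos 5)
Your proof is correct and follows essentially the same route as the paper's: both arguments pivot on extracting the identity $g\langle h,g\brangle = g$ by specializing to $f=g$ (justified by $g\in\overline{Ag}$, resp.\ $g\in\overline{gB}$) and then recovering the other reconstruction formula on $gB$ (resp.\ $Ag$) by the module action and a density/continuity argument. The only cosmetic difference is that you factor both conditions explicitly through the common identity and construct the approximating elements by functional calculus, where the paper simply invokes an approximate unit and declares the converse direction analogous.
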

\begin{proof}
	Suppose first $\blangle f,h\rangle g = f$ for all $f\in \overline{Ag}$. By Morita equivalence of $A$ and $B$,
	\begin{equation*}
	f = f \langle h,g\brangle
	\end{equation*}
	for all $f \in \overline{Ag}$, hence $\langle h,g\brangle$ fixes all elements in $\overline{Ag}$. In particular, since $A$ has an approximate unit, $g\in \overline{Ag}$, so $g\langle h,g\brangle = g$. Now let $f' \in gB$. We then write $f' = gb$ for some $b \in B$, and so we deduce 
	\begin{equation*}
	g\langle h,f'\brangle = g\langle h,gb\brangle = g\langle h,g \brangle b = gb = f',
	\end{equation*}
	since $g\langle h,g \brangle = g$ by the above. We extend the reconstruction formula to all of $\overline{gB}$ by continuity. The proof of the converse is completely analogous.
\end{proof}
In the special case where \cref{Proposition: Dual atom reconstruction subspaces} (i) holds for all $f\in E$ we get another reconstruction formula. Note the (subtle) difference in the placement of $g$ and $h$ in the statement compared to statement (ii)  in the preceding proposition.
\begin{prop}
	\label{Proposition: $E$ to $hB$ duality}
	Let $g,h \in E$ be so that $\blangle f, h \rangle g = f$ for all $f \in E$. Then
	$$f = h \langle g,f\brangle \quad\text{for all $f \in \overline{hB}.$}$$
\end{prop}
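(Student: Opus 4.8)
The plan is to transport the hypothesis through the associativity axiom \cref{Definition: Equivalence bimodule}(3) and then identify the resulting element of $B$ as its unit. I would first observe that the asserted formula can hold only on $\overline{hB}$ for a trivial reason: the right-hand side $h\langle g,f\brangle$ always lies in $hB$, so restricting the claim to $\overline{hB}$ is forced. Next, applying associativity $\blangle f,g'\rangle h' = f\langle g',h'\brangle$ with $g'=h$ and $h'=g$ to the hypothesis $\blangle f,h\rangle g = f$ (i.e.\ $\modft_{h,g}=\Id_E$) rewrites it as
\begin{equation*}
f\langle h,g\brangle = f \quad \text{for all } f \in E.
\end{equation*}
Thus $x := \langle h,g\brangle$ acts as a right identity on all of $E$.

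The main step, and the principal obstacle, is to upgrade this to $x = 1_B$, which in particular forces $B$ to be unital. Since $x$ fixes every $f\in E$ on the right, for any $f_1,f_2\in E$ we get $\langle f_1,f_2\brangle x = \langle f_1, f_2 x\brangle = \langle f_1,f_2\brangle$ by right $B$-linearity in the second slot; by fullness $\overline{\langle E,E\brangle} = B$ and continuity of multiplication this gives $bx=b$ for all $b\in B$, so $x$ is a right unit of $B$. A right unit of a $C^*$-algebra is automatically the self-adjoint two-sided unit (from $bx=b$ with $b=x^*$ one gets $x^*x=x^*$, hence after taking adjoints $x=x^*$, and then $x$ is also a left unit), so $B$ is unital and $\langle h,g\brangle = 1_B$. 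Alternatively, the maps $s\colon f\mapsto \blangle f,h\rangle$ and $r\colon a\mapsto ag$ satisfy $r\circ s=\Id_E$, exhibiting $E$ as a finitely generated projective $A$-module, whence $B$ is unital by \cref{Proposition: Fin gen proj iff unital} and $\langle h,g\brangle = 1_B$ follows by faithfulness of the $B$-action exactly as in the proof of \cref{Proposition: Gabor module Wexler-Raz}. Either way, taking adjoints yields $\langle g,h\brangle = \langle h,g\brangle^* = 1_B$.

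To finish, for $f = hb \in hB$ I would compute $h\langle g, hb\brangle = h\langle g,h\brangle b = h\,1_B\,b = hb = f$, and then extend the identity $f = h\langle g,f\brangle$ from the dense subset $hB$ to $\overline{hB}$ by continuity, using that $f\mapsto h\langle g,f\brangle$ is bounded since $\|h\langle g,f\brangle\|\le\|h\|\|g\|\|f\|$. The subtlety worth flagging, already noted in the remark preceding the statement, is the swap of $g$ and $h$ relative to \cref{Proposition: Dual atom reconstruction subspaces}(ii): there the hypothesis held only on $\overline{Ag}$, so one could deduce only $g\langle h,g\brangle = g$ and reconstruct with $g$ on the left, whereas here the hypothesis holds on all of $E$, which is precisely what promotes $\langle h,g\brangle$ to the genuine unit $1_B$ and thereby licenses reconstruction with $h$ on the left.
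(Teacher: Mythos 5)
Your proof is correct and follows essentially the same route as the paper: rewrite the hypothesis via associativity as $f\langle h,g\brangle = f$ on all of $E$, deduce $\langle h,g\brangle = \langle g,h\brangle = 1_B$, verify the formula on $hB$, and extend by continuity. Your primary justification that $\langle h,g\brangle$ is the unit (via fullness and the fact that a right unit of a $C^*$-algebra is the two-sided unit) is a valid self-contained variant of the paper's argument, which you also correctly reproduce as your alternative.
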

\begin{proof}
	Suppose that $\blangle f,h\rangle g = f$ for all $f \in E$. Then $E$ is finitely generated and projective as an $A$-module as before, since it is singly generated by $g$, and we may use $g$ and $h$ to make the maps $r$ and $s$ from the proof of \cref{Proposition: Fin gen proj iff unital}. Hence $B \cong \K_A (E) = \End_A (E)$ and $B$ is unital. We may rewrite the equality to $f = f\langle h,g \brangle$ for all $f \in E$, which implies $\langle h,g \brangle = 1_B$ as $B$ acts faithfully on $E$. But then 
	$$
	\langle g,h\brangle = \langle h,g\brangle^* = 1_B^* = 1_B
	$$
	as well. Then if we let $f \in hB$ we may write $f=hb$ for some $b\in B$, and we get
	\begin{equation*}
	\begin{split}
	h\langle g,f \brangle = h\langle g,hb \brangle = h \langle g,h \brangle b = h 1_B b = hb = f.
	\end{split}
	\end{equation*}
	We extend the reconstruction formula to $\overline{hB}$ by continuity.
\end{proof}
Note that in the setting of \cref{Proposition: Dual atom reconstruction subspaces} we may of course interchange $g$ and $h$. However the subspaces $\overline{Ah}$ and $\overline{Ag}$ do not need to coincide. We may however guarantee $\overline{Ag} = \overline{Ah}$ when $h$ has a special form. 
\begin{lemma}
	\label{Lemma: Atom and canonical dual same span}
	Let $g\in E$ be such that $\modft_g\vert_{\overline{Ag}}$ is invertible as a map $\overline{Ag}\to \overline{Ag}$. For $h= \modft_g\vert_{\overline{Ag}}^{-1} g $ we have $\overline{Ag} = \overline{Ah}$.
\end{lemma}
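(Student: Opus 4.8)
The plan is to exploit that the frame operator $\modft_g$ is a left $A$-module map, so that both its restriction to $\overline{Ag}$ and the inverse of that restriction intertwine the module action, and then to transport the closed submodule $\overline{Ag}$ onto $\overline{Ah}$ via the homeomorphism $\modft_g|_{\overline{Ag}}^{-1}$.

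First I would record two preliminary facts. Since the module action is continuous and $A$ is an algebra, $Ag$ is an $A$-submodule of $E$, and hence $\overline{Ag}$ is a closed $A$-submodule; moreover $g \in \overline{Ag}$, because $A$ admits an approximate unit $(e_\lambda)$ and $(1-e_\lambda)\blangle g,g\rangle(1-e_\lambda) \to 0$ forces $e_\lambda g \to g$ (this is the same observation already used in \cref{Proposition: Dual atom reconstruction subspaces}). Writing $S := \modft_g|_{\overline{Ag}}$, the hypothesis that $S$ is invertible on the Banach space $\overline{Ag}$ means that $S$ is a linear homeomorphism of $\overline{Ag}$ onto itself, the boundedness of $S^{-1}$ being supplied by the open mapping theorem.

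Next I would verify the crucial algebraic property: $\modft_g$ is $A$-linear, since $\modft_g(af) = \blangle af,g\rangle g = a\blangle f,g\rangle g = a\,\modft_g(f)$. Restricting, $S$ is $A$-linear, and a one-line computation shows the inverse is too: from $S(aS^{-1}v) = a\,S(S^{-1}v) = av$ one reads off $S^{-1}(av) = a\,S^{-1}v$ for $v \in \overline{Ag}$ and $a \in A$. In particular, with $h = S^{-1}g \in \overline{Ag}$, we have $ah = a\,S^{-1}g = S^{-1}(ag)$ for every $a \in A$, so that $Ah = S^{-1}(Ag)$.

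Finally I would conclude by a closure argument. Because $S^{-1}$ is a homeomorphism of $\overline{Ag}$ onto itself, it preserves closures taken inside $\overline{Ag}$, whence
\[
\overline{Ah} = \overline{S^{-1}(Ag)} = S^{-1}\bigl(\overline{Ag}\bigr) = \overline{Ag},
\]
the last equality holding since $S^{-1}$ is surjective onto $\overline{Ag}$. (The inclusion $\overline{Ah}\subseteq\overline{Ag}$ is in any case immediate from $h \in \overline{Ag}$.) The only point that requires care — and the \emph{main, though mild, obstacle} — is the bookkeeping with the module action: one must confirm that $S^{-1}$ genuinely intertwines the $A$-action, so that $Ah = S^{-1}(Ag)$, and that passing to closures commutes with the homeomorphism $S^{-1}$. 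Once these are in place, the equality of the two submodules follows at once.
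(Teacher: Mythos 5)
Your proposal is correct and follows essentially the same route as the paper: the key point in both is that $\modft_g\vert_{\overline{Ag}}^{-1}$ is again an $A$-module map, combined with $g\in\overline{Ag}$ via an approximate unit. The only cosmetic difference is that the paper exhibits the inclusion $\overline{Ag}\subset\overline{Ah}$ by the explicit formula $f=\blangle f,g\rangle h$, whereas you transport the dense submodule $Ag$ through the homeomorphism $S^{-1}$; both arguments are sound.
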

\begin{proof}
	Let $f \in \overline{Ag}$. As $\modft_g$, and hence also $\modft_g \vert_{\overline{Ag}}$, is an $A$-module operator, so is $\modft_g \vert_{\overline{Ag}}^{-1}$. Thus we get
	\begin{equation*}
	f =  \modft_g \vert_{\overline{Ag}}^{-1} \modft_g  f = \modft_g \vert_{\overline{Ag}}^{-1} (\blangle f,g\rangle g) = \blangle f,g\rangle \modft_g \vert_{\overline{Ag}}^{-1} g = \blangle  f,g \rangle h \in \overline{Ah}.
	\end{equation*}
	Hence we have $\overline{Ag} \subset \overline{Ah}$. Also $g \in \overline{Ag}$ as $A$ has a left approximate unit, and as $\modft_g $ is invertible as a map $\overline{Ag} \to \overline{Ag}$ it follows that $h = \modft_g\vert_{\overline{Ag}}^{-1}g \in \overline{Ag}$. Hence $\overline{Ag} = \overline{Ah}$.
\end{proof}
In the remaining part of the article we focus mostly on the case where $\modft_{g,h}$ is invertible as a map $E\to E$.
\begin{defn}
	If $g \in E$ is such that $\modft_g$ is invertible, then $h = \modft_g^{-1}g$ is called the \textit{canonical dual atom of $g$}. 
\end{defn}
\begin{rmk}
	Note that if $g$ is such that $\modft_g: E \to E$ is invertible, then $\overline{Ag}=E$. To see this, let $f\in E$. Then 
	\begin{equation*}
	f = \modft_g \modft_g^{-1}f = \blangle \modft_g^{-1}f , g \rangle g \in \overline{Ag}.
	\end{equation*}
\end{rmk}
Given $g\in E$ such that $\modft_g:E \to E$ is invertible and with $h = \modft_{g}^{-1} g$ the canonical dual atom, one may ask what the canonical dual atom of $h$ is. The following lemma tells us that it is exactly what one would expect from Hilbert space frame theory.
\begin{lemma} 
	\label{Lemma: Frame operator of dual}
	Let $g\in E$ be such that $\modft_{g}:E\to E$ is invertible, and let $h = \modft_{g}^{-1}g$. Then $\modft_h g = h$. Moreover, $\modft_h:E  \to E$ is invertible with $\modft_h^{-1} = \modft_g $, and the canonical dual atom of $h$ is $g$.
\end{lemma}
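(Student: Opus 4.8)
The plan is to reduce all three assertions to the single identity $\modft_h = \modft_g^{-1}$. Once this is established, everything follows at once: we get $\modft_h g = \modft_g^{-1} g = h$; the operator $\modft_h$ is invertible with $\modft_h^{-1} = \modft_g$ by definition of the inverse; and the canonical dual atom of $h$ is $\modft_h^{-1} h = \modft_g h = \modft_g \modft_g^{-1} g = g$, exactly as the statement predicts. So the whole proof hinges on the computation of $\modft_h$.

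To establish $\modft_h = \modft_g^{-1}$, I would fix an arbitrary $f \in E$, write $\modft_h f = \blangle f, h \rangle h$, and substitute $h = \modft_g^{-1} g$. Two properties of $\modft_g^{-1}$ are what make the computation work. First, $\modft_g^{-1}$ is an adjointable $A$-module operator, being the inverse of the adjointable $A$-module operator $\modft_g$. Second, $\modft_g^{-1}$ is self-adjoint, since $\modft_g = \modan_g^* \modan_g$ is self-adjoint and hence so is its inverse. Using self-adjointness and the adjoint relation for the left $A$-valued inner product, I would rewrite $\blangle f, \modft_g^{-1} g \rangle = \blangle \modft_g^{-1} f, g \rangle$, and then pull the coefficient $\blangle \modft_g^{-1} f, g \rangle \in A$ through the $A$-linear map $\modft_g^{-1}$ to obtain
\[
\modft_h f = \blangle \modft_g^{-1} f, g \rangle \, \modft_g^{-1} g = \modft_g^{-1}\bigl( \blangle \modft_g^{-1} f, g \rangle \, g \bigr) = \modft_g^{-1}\bigl( \modft_g \modft_g^{-1} f \bigr) = \modft_g^{-1} f,
\]
where the penultimate step recognizes $\blangle \modft_g^{-1} f, g \rangle \, g = \modft_g(\modft_g^{-1} f)$. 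As $f$ was arbitrary, this gives $\modft_h = \modft_g^{-1}$.

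The only delicate point is the adjoint manipulation $\blangle f, \modft_g^{-1} g \rangle = \blangle \modft_g^{-1} f, g \rangle$, so the main care needed is bookkeeping with the inner-product conventions. Concretely, one uses that an adjointable operator $T$ satisfies $\blangle T f, g \rangle = \blangle f, T^* g \rangle$, together with $\blangle x, y \rangle^* = \blangle y, x \rangle$, to deduce $\blangle f, T g \rangle = \blangle T^* f, g \rangle$, and then applies this with $T = \modft_g^{-1} = (\modft_g^{-1})^*$. Since $\modft_g$ is a positive invertible adjointable $A$-module operator, both the self-adjointness and the $A$-linearity of $\modft_g^{-1}$ are automatic, so there is no genuine obstacle beyond keeping track of which slot the inner product is linear in.
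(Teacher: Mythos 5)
Your proof is correct, and it is organized more efficiently than the paper's. The essential ingredients are identical in both arguments: one exploits that $\modft_g^{-1}$ is an adjointable, self-adjoint, $A$-linear operator to move it across the inner product via $\blangle f,\modft_g^{-1}g\rangle = \blangle \modft_g^{-1}f,g\rangle$ and to pull the resulting coefficient out. The difference is in the decomposition. The paper first verifies $\modft_h g = h$ by a direct computation, then checks $\modft_g\modft_h f = f$ and $\modft_h\modft_g f = f$ only for $f = ag$ in the dense submodule $Ag$ (invoking $\overline{Ag}=E$ from the preceding lemma) and extends both identities by continuity. You instead prove the single operator identity $\modft_h = \modft_g^{-1}$ directly for \emph{arbitrary} $f\in E$, which makes the density-and-continuity step unnecessary and yields all three assertions of the lemma (including $\modft_h g = h$, which the paper computes separately) as one-line corollaries. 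Your route is the cleaner one; the only point worth stating explicitly in a final write-up is the standard fact that the inverse of an invertible adjointable operator is adjointable with $(\modft_g^{-1})^* = (\modft_g^*)^{-1} = \modft_g^{-1}$, which justifies both the self-adjointness and the $A$-linearity you use --- the paper itself relies on the same fact elsewhere (e.g.\ in the proof of \cref{Proposition: Canonical dual atom is dual atom}), so this is not a gap.
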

\begin{proof}
	With $h=\modft_{g}^{-1}g$ the identity $\modft_h g = h$ is established as follows:
	\begin{equation*}
	\begin{split}
	\modft_h g &= \blangle g,h\rangle h = \blangle g, \modft_{g}^{-1} g \rangle \modft_{g}^{-1} g \\
	&= \modft_{g}^{-1} (\blangle \modft_{g}^{-1} g, g \rangle g ) = \modft_{g}^{-1} \modft_g \modft_{g}^{-1} g \\
	&= \modft_{g}^{-1} g = h.
	\end{split}
	\end{equation*}
	We proceed to show that $\modft_{h}$ is invertible. By \cref{Lemma: Atom and canonical dual same span} $\overline{Ag}=\overline{Ah}$, and we know $\overline{Ag}=E$. 
	Now let $f \in Ag$, and write $f=ag$ for some $a \in A$. We then have
	\begin{equation*}
	\begin{split}
	\modft_g \modft_h f &= \modft_g (\blangle f,h\rangle h) = \modft_g (\blangle ag, h \rangle h ) \\
	&= a \modft_g (\blangle g, \modft_{g}^{-1} g \rangle \modft_{g}^{-1} g )
	= a \modft_g \modft_{g}^{-1} (\blangle \modft_{g}^{-1} g, g \rangle g ) \\
	&=a \blangle \modft_{g}^{-1} g, g \rangle g
	= a \modft_g \modft_{g}^{-1} g \\
	&= ag = f.
	\end{split}
	\end{equation*}
	Likewise we have
	\begin{equation*}
	\begin{split}
	\modft_h \modft_g f &= \modft_h \modft_g (ag) = a \modft_h \modft_g (g) \\
	&= a \modft_h (\blangle g,g\rangle g) = a \blangle \blangle g,g \rangle g, h \rangle h \\
	&= a \modft_{g}^{-1} ( \blangle  \modft_{g}^{-1} (\blangle g,g \rangle g), g \rangle g) = a \modft_{g}^{-1} (\blangle \modft_{g}^{-1} \modft_g g,g \rangle g) \\
	&= a \modft_{g}^{-1} (\blangle g,g \rangle g) = a \modft_{g}^{-1} \modft_g g \\
	&= ag = f.
	\end{split}
	\end{equation*}
	$Ag$ is dense in $E$, and by extending the reconstruction formulas to all of $E$ by continuity it follows
	that $\modft_h^{-1} = \modft_g $. Then the canonical dual atom of $h$ is 
	\begin{equation*}
	\modft_h^{-1} h = \modft_g h = \blangle h,g\rangle g = \modft_g \modft_g^{-1}g = g,
	\end{equation*} which proves the result. 
\end{proof}
The following proposition tells us that $g$ and $\modft_g^{-1} g$ then indeed have the desired properties as described in \cref{Proposition: Dual atom reconstruction subspaces}.
\begin{prop}
	\label{Proposition: Canonical dual atom is dual atom}
	Let $g \in E$ be such that $\modft_g$ is invertible and let $h = \modft_g^{-1} g$. Then
	\begin{enumerate}
		\item $\blangle f, g \rangle h = f$ for all $f \in E = \overline{Ah} = \overline{Ag}$.
		\item $f'= h \langle g,f'\brangle$ for all $f' \in \overline{hB}$
		\item $\blangle f,h \rangle g = f$ for all $f \in E = \overline{Ah} = \overline{Ag}$.
	\end{enumerate}
\end{prop}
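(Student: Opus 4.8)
The plan is to obtain each of the three reconstruction formulas by reducing to results already established for the canonical dual atom. I would start with item (1), which I expect to come from a one-line computation. Since $\modft_g$ is invertible it is adjointable (being positive), so $\modft_g^{-1}$ is an adjointable, hence left-$A$-linear, operator on $E$. Using $\modft_g f = \blangle f,g\rangle g$ together with $h = \modft_g^{-1} g$, for any $f \in E$ I would write
\[
\blangle f,g\rangle h = \blangle f,g\rangle \modft_g^{-1} g = \modft_g^{-1}\big(\blangle f,g\rangle g\big) = \modft_g^{-1}\modft_g f = f,
\]
where the crucial middle equality is precisely the $A$-linearity of $\modft_g^{-1}$. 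For the accompanying span identifications, the remark observing that invertibility of $\modft_g$ forces $\overline{Ag}=E$ handles one equality, and \cref{Lemma: Atom and canonical dual same span} (applicable because $\modft_g\vert_{\overline{Ag}} = \modft_g$ is invertible) gives $\overline{Ag}=\overline{Ah}$; hence $E=\overline{Ag}=\overline{Ah}$.

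Next I would deduce item (3) by symmetry rather than by a fresh calculation. By \cref{Lemma: Frame operator of dual}, $\modft_h$ is invertible with $\modft_h^{-1}=\modft_g$, and the canonical dual atom of $h$ is $g$, i.e.\ $g=\modft_h^{-1}h$. Thus the pair $(h,g)$ satisfies exactly the hypotheses that $(g,h)$ satisfied in item (1), with the two atoms interchanged, so applying item (1) to $h$ and its canonical dual atom $g$ yields $\blangle f,h\rangle g = f$ for all $f\in E$. Equivalently, one may simply repeat the computation above in the swapped roles, $\blangle f,h\rangle g = \blangle f,h\rangle \modft_h^{-1} h = \modft_h^{-1}(\blangle f,h\rangle h)=\modft_h^{-1}\modft_h f = f$.

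Finally, item (2) should fall out of the left--right module duality that has already been packaged for us. Item (3) establishes exactly the hypothesis of \cref{Proposition: $E$ to $hB$ duality}, namely $\blangle f,h\rangle g = f$ for all $f\in E$, and the conclusion of that proposition is precisely $f' = h\langle g,f'\brangle$ for all $f'\in\overline{hB}$. So item (2) follows immediately once item (3) is in hand.

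I do not anticipate a genuine obstacle: all three formulas are consequences of machinery already in place, and the proof is essentially a matter of invoking \cref{Lemma: Frame operator of dual} and \cref{Proposition: $E$ to $hB$ duality} in the right order. The only points needing a little care are the bookkeeping of the left-$A$- versus right-$B$-module structure when passing to item (2), and ensuring that the span identifications in item (1) are justified by the invertibility of $\modft_g$ on all of $E$ rather than merely on a proper submodule.
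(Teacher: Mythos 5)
Your proof is correct, and it leans on the same toolbox as the paper (\cref{Lemma: Atom and canonical dual same span}, \cref{Lemma: Frame operator of dual}, and the left--right duality propositions), but the routing through the items differs in a few places. For item (1) the paper restricts to $f=ah\in Ah$, computes $\blangle ah,g\rangle h = a\modft_h g = ah$ using $\modft_h g=h$, and then extends by continuity; your one-line argument via the $A$-linearity of $\modft_g^{-1}$ (which is legitimate, since the inverse of an adjointable $A$-module operator is again an $A$-module operator, as the paper itself notes in the proof of \cref{Lemma: Atom and canonical dual same span}) is more direct and avoids the density-and-continuity step. For item (3) the paper gives a fresh computation using the self-adjointness of $\modft_g^{-1}$, namely $\blangle f,\modft_g^{-1}g\rangle g = \blangle \modft_g^{-1}f,g\rangle g = \modft_g\modft_g^{-1}f=f$, whereas you obtain it by the symmetry packaged in \cref{Lemma: Frame operator of dual}; both are fine, and your version makes the $g\leftrightarrow h$ duality more transparent at the cost of relying on that lemma. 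For item (2) the paper deduces it from item (1) via \cref{Proposition: Dual atom reconstruction subspaces} (with the roles of $g$ and $h$ interchanged), while you deduce it from item (3) via \cref{Proposition: $E$ to $hB$ duality}; since the hypothesis of the latter is exactly your item (3) and its conclusion is exactly item (2), this is an equally valid and arguably more literal invocation. No gaps.
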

\begin{proof} 
	Note first that $E = \overline{Ag}= \overline{Ah}$ by \cref{Lemma: Atom and canonical dual same span}. Now,
	let $f \in Ah$ and write $f=ah$ for some $a \in A$. Then we have
	\begin{equation*}
	\begin{split}
	\blangle f,g\rangle h &= \blangle ah,g\rangle h = a \blangle h,g\rangle h = a \blangle \modft_g^{-1} g,  g \rangle h \\
	&= a \blangle g,\modft_g^{-1} g \rangle h  = a \blangle g,h\rangle h = a \modft_h g = ah = f,
	\end{split}
	\end{equation*}
	where we have used $\modft_h g= h$, which holds by \cref{Lemma: Frame operator of dual}. We extend the reconstruction formula by continuity so it is valid for all $f \in E$. By \cref{Proposition: Dual atom reconstruction subspaces} this also implies $f'= h \langle g,f'\brangle$ for all $f' \in \overline{hB}$. Hence items (1) and (2) are true. To show (3), let $f \in E$. Then
	\begin{equation*}
	\begin{split}
	\blangle f,h \rangle g &= \blangle f , \modft_g^{-1} g \rangle g = \blangle \modft_g^{-1} f , g\rangle g 
	= \modft_g \modft_g^{-1} f = f,
	\end{split}
	\end{equation*}
	so item (3) is also true. 
\end{proof}
We may also prove the following additional reconstruction formula when $h$ is the canonical dual atom. Note the (subtle) difference in where $h$ and $g$ are in the reconstruction formula compared to \cref{Proposition: Dual atom reconstruction subspaces}.
\begin{prop}
	Let $g \in E$ be such that $\modft_g$ is invertible, and let $h = \modft_g^{-1}g$. Then $f = h \langle g,f\brangle$ for all $f \in \overline{gB}$ and $f' = g\langle h,f'\brangle$ for all $f' \in \overline{hB}$. As a consequence, $\overline{gB} = \overline{hB}$.
\end{prop}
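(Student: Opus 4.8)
The plan is to prove each of the two reconstruction formulas by a direct computation on the dense submodules $gB$ and $hB$ respectively, and then to extend by continuity; the equality $\overline{gB}=\overline{hB}$ falls out afterwards by mutual inclusion. The essential tool is the associativity axiom $\blangle f_1,f_2 \rangle f_3 = f_1\langle f_2,f_3 \brangle$ from \cref{Definition: Equivalence bimodule}(3), which converts the right-hand inner product expressions into left $A$-actions, together with the two identities already recorded in \cref{Lemma: Frame operator of dual}. Writing these out, $\modft_h g = \blangle g,h\rangle h = h$ and $\modft_g h = \blangle h,g\rangle g = g$, the latter being precisely the assertion there that the canonical dual atom of $h$ is $g$.

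For the first formula I would take $f \in gB$ and write $f = gb$. Using associativity, $h\langle g,f \brangle = \blangle h,g\rangle f$, and since the left $A$-action commutes with the right $B$-action this equals $\blangle h,g\rangle(gb) = (\blangle h,g\rangle g)b = gb = f$, where the penultimate step invokes $\blangle h,g\rangle g = g$. This establishes $f = h\langle g,f \brangle$ on $gB$, and I extend it to $\overline{gB}$ by continuity. The second formula is entirely symmetric: for $f' = hb \in hB$, associativity gives $g\langle h,f' \brangle = \blangle g,h\rangle f' = (\blangle g,h\rangle h)b = hb = f'$, now using $\blangle g,h\rangle h = h$, and again I extend to $\overline{hB}$ by continuity.

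For the consequence I would observe that the first formula exhibits every $f \in gB$ as $f = h\langle g,f \brangle \in hB$, so $gB \subseteq hB$ and hence $\overline{gB} \subseteq \overline{hB}$; the second formula symmetrically yields $hB \subseteq gB$, hence $\overline{hB} \subseteq \overline{gB}$, and the two inclusions combine to give $\overline{gB} = \overline{hB}$.

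The only subtlety — and the reason this does not follow immediately from \cref{Proposition: Dual atom reconstruction subspaces} — is the crossed placement of $g$ and $h$ relative to the subspace one works over: that proposition hands me $f = h\langle g,f \brangle$ on $\overline{hB}$ and $f = g\langle h,f \brangle$ on $\overline{gB}$, whereas here I need the first identity on $\overline{gB}$ and the second on $\overline{hB}$. This is exactly where the crossed identities $\blangle h,g\rangle g = g$ and $\blangle g,h\rangle h = h$ become indispensable, since they are what make the $A$-action $\blangle h,g\rangle$ (resp. $\blangle g,h\rangle$) act as the identity on the generator $g$ (resp. $h$), and therefore on the entire right submodule it generates. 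Beyond correctly bookkeeping these two identities and performing the continuity extension, I expect no genuine obstacle.
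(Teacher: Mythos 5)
Your proof is correct; the only inputs you use, namely $\blangle g,h\rangle h = \modft_h g = h$ and $\blangle h,g\rangle g = \modft_g h = g$, are both explicitly available from \cref{Lemma: Frame operator of dual}, the associativity axiom converts $h\langle g,f\brangle$ into $\blangle h,g\rangle f$, and left multiplication by a fixed element of $A$ is bounded, so the extension by continuity from $gB$ to $\overline{gB}$ is legitimate. The route differs from the paper's in its organization. The paper first invokes \cref{Proposition: Dual atom reconstruction subspaces} to obtain the \emph{uncrossed} formula $f = g\langle h,f\brangle$ on $\overline{gB}$, and then swaps $g$ and $h$ inside the left inner product via the self-adjointness of $\modft_g^{-1}$, writing $\blangle g,\modft_g^{-1}g\rangle = \blangle \modft_g^{-1}g, g\rangle$; the second formula is then obtained by interchanging the roles of $g$ and $h$ using \cref{Lemma: Atom and canonical dual same span} and \cref{Lemma: Frame operator of dual}, and the inclusion $\overline{gB}=\overline{hB}$ is proved separately by exhibiting $g = h\langle g,g\brangle$ and $h = g\langle h,h\brangle$. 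You instead prove the crossed formulas directly by evaluating $\blangle h,g\rangle$ and $\blangle g,h\rangle$ on the generators, which makes the argument self-contained modulo \cref{Lemma: Frame operator of dual} alone, and it has the pleasant side effect that $\overline{gB}=\overline{hB}$ drops out of the reconstruction formulas themselves rather than requiring a separate computation. Both arguments ultimately rest on the same fact -- that a fixed element of $A$ acts as the identity on a cyclic right $B$-submodule once it fixes the generator -- so the difference is one of packaging rather than substance, but your version is arguably the more economical of the two.
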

\begin{proof}
Suppose $f$ in $\overline{gB}$. For $g$ and $h$ as stated, we have $f = \blangle f,h\rangle g$ for all $f\in E$. By \cref{Proposition: Dual atom reconstruction subspaces} we then have $f = g\langle h,f\brangle$ for all $f \in \overline{gB}$. Then
	\begin{equation*}
	\begin{split}
	f&= g\langle h,f\brangle = \blangle g,h \rangle f = \blangle g, \modft_g^{-1}g \rangle f \\
	&= \blangle \modft_g^{-1}g ,g \rangle f = h \langle g,f \brangle.
	\end{split}
	\end{equation*}
	The second statement follows from noting that our assumptions imply $\overline{Ag} = \overline{Ah}=E$ by \cref{Lemma: Atom and canonical dual same span} and the fact that the canonical dual of $h$ is $g$ by \cref{Lemma: Frame operator of dual}. Then we may simply interchange $g$ and $h$ in the argument for the first assertion. 
	
	Lastly we prove $\overline{gB} = \overline{hB}$. We know $g \in \overline{gB}$ as $B$ has an approximate unit, so
	\begin{equation*}
	g = \modft_g h = h\langle g,g\brangle \in \overline{hB}.
	\end{equation*}
	Likewise, $h\in \overline{hB}$ and so
	\begin{equation*}
	h = \modft_h g = g\langle h,h\brangle \in \overline{gB}.
	\end{equation*}
	This finishes the proof.
\end{proof}
There is a correspondence between projections in Morita equivalent $C^*$-algebras, see for example \cite{ri88}. We formulate the following variant. Let $E$ be an $A$-$B$-equivalence bimodule, and let $B$ be unital. Then there is a way of constructing idempotents in $A$. This is the content of the following proposition.
\begin{prop}
	\label{Proposition: Identity induce idempotent}
	Let $E$ be an $A$-$B$-equivalence bimodule between a $C^*$-algebra $A$ and a unital $C^*$-algebra $B$. If $g,h \in E$ are such that $\langle g,h \brangle = 1_B$, then $\blangle g,h \rangle$ is an idempotent in $A$. In particular, the canonical dual atom $h = \modft_g^{-1} g$ yields a projection $\blangle g,h \rangle$ in $A$.
\end{prop}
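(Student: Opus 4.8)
The plan is to establish the two assertions separately, the first being a direct algebraic consequence of the three defining identities of an equivalence bimodule in \cref{Definition: Equivalence bimodule}. Writing $p := \blangle g,h \rangle$, I would compute $p^2$ as follows. Since the $A$-valued inner product is $A$-linear in its first slot, $p^2 = \blangle g,h\rangle \blangle g,h \rangle = \blangle \blangle g,h\rangle g , h \rangle$. The associativity identity $\blangle f,g\rangle h = f\langle g,h\brangle$, applied to $\blangle g,h\rangle g$, rewrites the inner argument as $g\langle h,g\brangle$. Applying the compatibility relation $\blangle fb,g\rangle = \blangle f, gb^*\rangle$ with $b = \langle h,g\brangle$, and using $\langle h,g\brangle^* = \langle g,h\brangle$, I would obtain $p^2 = \blangle g, h\langle g,h\brangle \rangle$. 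The hypothesis $\langle g,h\brangle = 1_B$ then collapses this to $p^2 = \blangle g,h\rangle = p$, so $p$ is idempotent.

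For the final assertion I would first record that the canonical dual atom $h = \modft_g^{-1} g$ satisfies $\langle g,h\brangle = \langle h,g \brangle = 1_B$. Indeed, \cref{Proposition: Canonical dual atom is dual atom} gives $\modft_{g,h} f = \modft_{h,g} f = f$ for all $f \in E$ (its items (1) and (3)), and \cref{Proposition: Gabor module Wexler-Raz} then yields precisely these biorthogonality relations. The first part therefore shows that $p = \blangle g,h\rangle$ is an idempotent, and it only remains to verify $p = p^*$. Using $\blangle g,h\rangle^* = \blangle h,g\rangle$ together with the adjointability identity $\blangle Tf,g\rangle = \blangle f, T^*g\rangle$ and the fact that $\modft_g$ --- hence $\modft_g^{-1}$ --- is self-adjoint, being positive and invertible, I would compute
\begin{equation*}
p^* = \blangle h,g\rangle = \blangle \modft_g^{-1} g, g\rangle = \blangle g, \modft_g^{-1} g\rangle = \blangle g,h\rangle = p.
\end{equation*}
Thus $p$ is a self-adjoint idempotent, that is, a projection.

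The computation is short, and I expect the only real difficulty to be bookkeeping: keeping track of which slot each inner product is linear in and applying the associativity identity $\blangle f,g\rangle h = f\langle g,h\brangle$ with the correct assignment of its three arguments. The one genuinely structural input, beyond this algebra, is the self-adjointness of $\modft_g^{-1}$, which is guaranteed since $\modft_g = \modan_g^* \modan_g$ is positive.
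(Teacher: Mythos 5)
Your proof is correct and follows essentially the same route as the paper's: the idempotency is the same computation $\blangle g,h\rangle\blangle g,h\rangle = \blangle \blangle g,h\rangle g, h\rangle = \blangle g\langle h,g\brangle, h\rangle$ (the paper just substitutes $\langle h,g\brangle = 1_B^* = 1_B$ directly, where you first shuffle the factor into the second slot), and the self-adjointness step via $\blangle g,\modft_g^{-1}g\rangle = \blangle \modft_g^{-1}g, g\rangle$ is identical. Your explicit check that the canonical dual atom satisfies $\langle g,h\brangle = 1_B$ is a detail the paper leaves implicit, and is a reasonable addition.
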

\begin{proof}
	From $\langle g,h \brangle = 1_B = 1_B^* = \langle h,g \brangle$, we get
	\begin{equation*}
	\blangle g,h \rangle \blangle g,h \rangle = \blangle \blangle g,h \rangle g, h\rangle  = \blangle g \langle h,g \brangle , h \rangle = \blangle g \cdot 1_B , h \rangle = \blangle g,h\rangle,
	\end{equation*}
	so $\blangle g,h \rangle$ is an idempotent in $A$. If $h = \modft_g^{-1} g$, we also have
	\begin{equation*}
	\blangle g, h \rangle = \blangle g, \modft_g^{-1} g \rangle = \blangle \modft_g^{-1} g, g \rangle = \blangle h,g \rangle = \blangle g,h \rangle^*,
	\end{equation*}
	so $\blangle g,h \rangle$ is a projection in $A$.
\end{proof}
One of the cornerstones of Gabor analysis is the duality principle, see for example \cite{dalala95,ja95,rosh97}. One of the main intentions of this investigation is a reformulation of this duality principle in our module framework. To this end we introduce the following operator. For an element $g \in E$ we define the $B$-coefficient operator by
\begin{equation}
\begin{split}
\modan_g^B : E &\to B \\
f &\mapsto \langle g,f\brangle.
\end{split}
\end{equation}
Note that this operator is $B$-adjointable with adjoint
\begin{equation}
(\modan_g^{B})^* b \mapsto g\cdot b.
\end{equation}
We are now in the position to state and prove the module version of the duality principle.
\begin{prop}[Module Duality Principle]
	\label{Proposition: Modular Ron-Shen Duality}
	Let $g \in E$. The following are equivalent.
	\begin{enumerate}
		\item $\modft_g : E \to E$ is invertible.
		\item $\modan_g^B (\modan_g^{B})^*: B\to B$ is an isomorphism.
	\end{enumerate}
\end{prop}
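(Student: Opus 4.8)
The plan is to identify both operators appearing in the proposition with multiplication by the single positive element $c := \langle g,g\brangle \in B$, and then to read off that each of (1) and (2) is equivalent to invertibility of $c$. The decisive structural input is that throughout this section $B$ is unital, so by \cref{Proposition: Fin gen proj iff unital} the module $E$ is finitely generated projective and $\End_A(E) = \K_A(E)$; moreover the identification $\Theta_{f,g}\mapsto \langle f,g\brangle$ is a $*$-isomorphism of \emph{unital} $C^*$-algebras $\End_A(E) \cong B$. I would state this at the outset and use it to transport invertibility back and forth.

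For (1), I would note that $\modft_g = \modan_g^*\modan_g = \Theta_{g,g}$, so under the isomorphism above $\modft_g$ corresponds exactly to $\langle g,g\brangle = c$. Since a $*$-isomorphism of unital $C^*$-algebras preserves invertibility, $\modft_g$ is invertible in $\End_A(E)$ if and only if $c$ is invertible in $B$. This disposes of statement (1).

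For (2), I would first compute the operator explicitly. Using the adjoint $(\modan_g^B)^* b = g\cdot b$ together with the right $B$-module structure of the inner product, a direct computation gives
\[
\modan_g^B(\modan_g^B)^* b = \langle g, g\cdot b\brangle = cb, \qquad b \in B,
\]
so that $\modan_g^B(\modan_g^B)^*$ is simply left multiplication $L_c$ by $c$ on $B$. With $B$ unital, $L_c$ is a bijection (equivalently, an invertible operator) if and only if $c$ is invertible: surjectivity yields $v$ with $cv = 1_B$, and applying injectivity to $c(vc - 1_B)=0$ gives $vc = 1_B$. Hence (2) holds if and only if $c$ is invertible in $B$.

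Combining the two reductions, both (1) and (2) are equivalent to invertibility of $\langle g,g\brangle$ in $B$, which establishes the proposition. The step I expect to require the most care is not any single computation but the passage between invertibility of $\modft_g$ \emph{in $\End_A(E)$} and the scalar condition on $c\in B$: this is precisely where the standing unitality of $B$, and the resulting coincidence $\End_A(E) = \K_A(E)\cong B$, is essential. Without that hypothesis one would first have to show that either condition already forces $B$ to be unital — for (2), for instance, by observing that $u := L_c^{-1}(c)$ satisfies $cu=c$, whence $c(ub-b)=0$ and injectivity of $L_c$ makes $u$ a left unit, with $u^*$ a right unit by adjunction, so $u=u^*$ is a two-sided unit.
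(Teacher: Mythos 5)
Your proposal is correct and follows essentially the same route as the paper: both conditions are reduced to invertibility of $\langle g,g\brangle$ in $B$ via the identities $\modft_g f = f\langle g,g\brangle$ and $\modan_g^B(\modan_g^B)^*b = \langle g,g\brangle b$. The only difference is cosmetic — the paper does not presuppose $B$ unital but derives it from each condition (for (1) via finite generation and projectivity of $E$, for (2) via $B$ being an ideal in $\End_B(B)$), which is precisely the point you flag and resolve in your closing paragraph.
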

\begin{proof}
	We show that both statements are equivalent to $\langle g,g \brangle$ being invertible in $B$. Suppose $\modft_g$ is invertible. Then $E$ is finitely generated and projective as an $A$-module, as we can make the maps $r$ and $s$ from the proof of \cref{Proposition: Fin gen proj iff unital} using $g$ and $\modft_g^{-1}g$. Thus $B$ is unital. As
	\begin{equation*}
	\modft_g f = f \langle g,g \brangle,
	\end{equation*}
	statement (1) is equivalent to $\langle g,g\brangle$ being invertible in $B$. On the other hand,
	\begin{equation}
	\modan_g^B (\modan_g^{B})^* b = \modan_g^B (g\cdot b) = \langle g,g\cdot b\brangle = \langle g,g \brangle b.
	\end{equation}
	Since $\modan_g^B (\modan_g^{B})^* \in \End_B (B)$ and $B$ is an ideal in $\End_B (B)$, statement (2) implies that $B$ is unital and the statement is equivalent to $\langle g,g\brangle$ being invertible in $B$. 
\end{proof}
In Gabor analysis one is often concerned with the regularity of the atoms generating a Gabor frame, see \cref{Subsection: Implications for Gabor Analysis}. In case $g$ is so that $\modft_{g}$ is invertible on all of $E$ with $g \in \E$, and $\B \subset B$ is spectral invariant Banach $*$-subalgebra with the same unit as $B$, the canonical dual atom has the following important property.
\begin{prop}
	\label{Proposition: Canonical dual of regular is regular}
	Let $E$ be an $A$-$B$-equivalence bimodule, with an $\A$-$\B$-pre-equivalence bimodule $\E \subset E$. Suppose $\B \subset B$ is spectral invariant with the same unit. If $g \in \E$ is such that $\Theta_g : E \to E$ is invertible, then the canonical dual $\Theta_g^{-1} g$ is in $\E$ as well.
\end{prop}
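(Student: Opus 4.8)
The plan is to identify the frame operator $\modft_g$ with right multiplication by the $B$-inner product $\langle g,g\brangle$ and then let spectral invariance do all the work. First I would observe that by the associativity axiom in \cref{Definition: Equivalence bimodule}(3), for every $f \in E$ we have
\begin{equation*}
\modft_g f = \blangle f,g\rangle g = f\langle g,g\brangle,
\end{equation*}
so that $\modft_g$ is precisely right multiplication by the element $\langle g,g\brangle \in B$. Since $g \in \E$ and $\E$ is an $\A$-$\B$-pre-equivalence bimodule, the $\B$-valued inner product $\langle g,g\brangle$ in fact lies in $\B$.

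Next I would use the hypothesis that $\modft_g$ is invertible. By \cref{Proposition: Modular Ron-Shen Duality} (concretely, by the computation in its proof) the invertibility of $\modft_g$ forces $B$ to be unital and forces $\langle g,g\brangle$ to be invertible in $B$. This is where the hypotheses converge: $\langle g,g\brangle$ is an element of the dense subalgebra $\B$ that is invertible in the larger algebra $B$. Invoking the spectral invariance of $\B$ in $B$ — and here it is essential that $\B$ contains the same unit as $B$, so that the notion of invertibility matches — I conclude $\langle g,g\brangle^{-1} \in \B$.

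Finally, right multiplication by $\langle g,g\brangle^{-1}$ is a genuine two-sided inverse of $\modft_g$ (a one-line check using $B$ unital), whence $\modft_g^{-1} f = f\langle g,g\brangle^{-1}$ for all $f \in E$. In particular the canonical dual atom is
\begin{equation*}
h = \modft_g^{-1} g = g\,\langle g,g\brangle^{-1},
\end{equation*}
a product of $g \in \E$ with an element of $\B$; since $\E$ is a right $\B$-module, $h \in \E$, as desired.

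The only genuine obstacle is the spectral invariance step. Without it, one can still conclude $\langle g,g\brangle^{-1} \in B$ and hence $h \in E$, but there is no mechanism to pull the inverse back into the dense subalgebra $\B$; everything else in the argument is purely formal once $\modft_g$ is recognized as right multiplication by a $\B$-element. I would therefore present the proof with the spectral invariance invocation as its decisive point.
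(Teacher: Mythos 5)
Your proof is correct and follows essentially the same route as the paper's: identify $\modft_g$ with right multiplication by $\langle g,g\brangle \in \B$, deduce invertibility of $\langle g,g\brangle$ in $B$, apply spectral invariance to get $\langle g,g\brangle^{-1} \in \B$, and conclude $\modft_g^{-1}g = g\langle g,g\brangle^{-1} \in \E$ since $\E\B \subset \E$. The only cosmetic difference is that you re-derive the unitality of $B$ from the invertibility of $\modft_g$, whereas the hypothesis that $\B$ is spectral invariant in $B$ with the same unit already presupposes it.
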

\begin{proof}
For $f \in E$ we have
	\begin{equation*}
	\modft_{g} f = \blangle f,g \rangle g = f \langle g,g\brangle.
	\end{equation*}
	We deduce that $\langle g,g\brangle$ is invertible in $B$ and
	$$
	\modft_{g}^{-1} g = g  \langle g,g\brangle^{-1}.
	$$
	But as $g \in \E$ we have $\langle g,g \brangle \in \B$. By spectral invariance of $\B$ in $B$ it follows that $ \langle g,g\brangle^{-1} \in \B$. Then, since $\E \B \subset \E$, it follows that
	\begin{equation*}
	\modft_{g}^{-1} g = g  \langle g,g\brangle^{-1} \in \E,
	\end{equation*}
	which is the desired assertion. 
\end{proof}
\subsection{Extending to several generators} 
\label{Subsection: Extending to several generators}
We extend the above theory to several generators. Indeed we will lift the $A$-$B$-equivalence bimodule $E$ to an $M_n (A)$-$M_d (B)$-equivalence bimodule, for $d,n\in \N$, and consider a 
type of module frame in this matrix setting. We will see in \cref{Subsection: Implications for Gabor Analysis} that this generalizes $n$-multiwindow $d$-super Gabor frames of \cite{jalu18duality}.  

Note that we will index an $n\times d$-matrix by $(i,j)$, $i \in \Z_n$, $j \in \Z_d$, that is, we start indexing at $0$. The reason for this is that in \cref{Subsection: Implications for Gabor Analysis} we will need to incorporate the groups $\Z_k$, $k \in \N$. Here $\Z_k$ denotes the group $\Z/(k\Z)$.

We will consider $M_{n,d}(E)$ as an $M_n (A)$-$M_d (B)$-bimodule. Define an $M_{n} (A)$-valued inner product on $M_{n,d}(E)$ by
\begin{equation}
\begin{split}
\bbracket-,-]&: M_{n,d}(E) \times M_{n,d}(E) \to M_{n} (A) \\
(f,g)&\mapsto \sum_{k\in \Z_d} \begin{pmatrix}
\blangle f_{0,k} , g_{0,k} \rangle & \blangle f_{0,k} , g_{1,k} \rangle & \ldots & \blangle f_{0,k} , g_{n-1,k} \rangle \\
\blangle f_{1,k} , g_{0,k} \rangle & \blangle f_{1,k} , g_{1,k} \rangle & \ldots & \blangle f_{1,k} , g_{n-1,k} \rangle \\
\vdots & \vdots & \ddots & \vdots \\
\blangle f_{n-1,k} , g_{0,k} \rangle & \blangle f_{n-1,k} , g_{1,k} \rangle & \ldots & \blangle f_{n-1,k} , g_{n-1,k} \rangle
\end{pmatrix}.
\end{split}
\end{equation}
The action of $M_{n} (A)$ on $M_{n,d}(E)$ is defined in the natural way, that is
\begin{equation}
(af)_{i,j} = \sum_{k\in \Z_n} a_{i,k}f_{k,j},
\end{equation}
for $a\in M_n (A)$ and $f \in M_{n,d} (E)$.
Likewise we define an $M_{d}(B)$-valued inner product on $M_{n,d}(E)$ in the following way
\begin{equation}
\begin{split}
[-,-\bracketb&: M_{n,d}(E) \times M_{n,d}(E) \to M_{d}(B) \\
(f,g) &\mapsto \sum_{k\in \Z_n} \begin{pmatrix}
\langle f_{k,0} , g_{k,0} \brangle & \langle f_{k,0} , g_{k,1} \brangle & \ldots & \langle f_{k,0} , g_{k,d-1} \brangle \\
\langle f_{k,1} , g_{k,0} \brangle & \langle f_{k,1} , g_{k,1} \brangle & \ldots & \langle f_{k,1} , g_{k,d-1} \brangle \\
\vdots & \vdots & \ddots & \vdots \\
\langle f_{k,d-1} , g_{k,0} \brangle & \langle f_{k,d-1} , g_{k,1} \brangle & \ldots & \langle f_{k,d-1} , g_{k,d-1} \brangle
\end{pmatrix}.
\end{split}
\end{equation}
The right action of $M_{d}(B)$ on $M_{n,d}(E)$ is defined by
\begin{equation}
(fb)_{i,j} = \sum_{k\in \Z_d} f_{i,k}b_{k,j}
\end{equation}
for $f \in M_{n,d}(E)$ and $b\in M_d (B)$.

With this setup, $M_{n,d}(E)$ becomes an $M_n (A)$-$M_d(B)$-equivalence bimodule. Indeed it is not hard to verify the three conditions of \cref{Definition: Equivalence bimodule}. Verifying conditions ii) and iii) is a matter of verifying the statements in each matrix element using that $E$ is an $A$-$B$-equivalence bimodule. Verifying condition i) is a matter of getting density in each matrix entry by choosing elements of $M_{n,d} (E)$ in the correct way. Namely, if we want to get the elements in place $(i,j)$ in $M_n (A)$, then we may for example pair elements of $M_{n,d} (E)$ with nonzero entry only in place $(i,k)$ with elements of $M_{n,d} (E)$ with nonzero entry only in place $(j,k)$, for some $k \in \Z_d$. The analogous procedure holds for $M_d (B)$. Density then follows by $\overline{\blangle E,E \rangle} = A$ and $\overline{\langle E,E \brangle} = B$. 
In particular, we have for $f,g,h \in M_{n,d} (E)$ that
\begin{equation}
\bbracket f,g] h = f[g,h\bracketb,
\end{equation}
and also
\begin{equation}
\begin{split}
&M_n (A) = \mathbb{K}_{M_d (B)} (M_{n,d}(E)), \\
&M_d (B) = \mathbb{K}_{M_n (A)} (M_{n,d} (E)).
\end{split}
\end{equation}
Also, since the new inner products are defined using the inner products $\blangle -,-\rangle$ and $\langle -,-\brangle$, we see that in case we have Banach $*$-subalgebras $\A \subset A$ and $\B \subset B$, as well as an $\A$-$\B$-subbimodule $\E \subset E$ as above, we get
\begin{equation*} 
\bbracket M_{n,d}(\E),M_{n,d}(\E)] \subset M_{n} (\A), \quad [M_{n,d}(\E),M_{n,d}(\E)\bracketb \subset M_{d}(\B),
\end{equation*}
as well as
\begin{equation*}
M_n (\A) M_{n,d}(\E) \subset M_{n,d}(\E), \quad M_{n,d}(\E)M_d (\B) \subset M_d (\E).
\end{equation*}
We wish to reduce the matrix algebra case to the Gabor bimodule case of \cref{Subsection: The Single Generator Case}, so we need to guarantee that spectral invariance of Banach $*$-subalgebras lifts to matrices. For convenience we include the following result.
\begin{lemma}[\cite{sc92}]
	\label{Lemma: Spectral invariance lifts to matrices}
	If $\B $ is a spectral invariant Banach subalgebra of a Banach algebra $B$, then $M_m (\B)$ is a spectral invariant Banach subalgebra of $M_m (B)$, for all $m \in \N$.
\end{lemma}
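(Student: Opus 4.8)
The plan is to first translate the conclusion into inverse-closedness and then to reduce the matrix size to the case $m=2$. Since $\B$ and $M_m(\B)$ are unital Banach subalgebras of $B$ and $M_m(B)$ sharing their respective units, spectral invariance of $M_m(\B)$ in $M_m(B)$ is exactly the statement that whenever $T \in M_m(\B)$ is invertible in $M_m(B)$ one has $T^{-1} \in M_m(\B)$. To cut down the size, observe that $M_{mn}(\cdot) = M_m(M_n(\cdot))$, so iterating the case $m=2$ (applied to the pair $M_{2^{k-1}}(\B) \subset M_{2^{k-1}}(B)$, which is again of the required form) yields the statement for every $m = 2^k$. A general $m$ is then recovered by choosing $2^k \geq m$ and passing to the corner cut out by the idempotent $p = \diag(1_B,\dots,1_B,0,\dots,0) \in M_{2^k}(\B)$ with $m$ ones, since $M_m(\B) = p\,M_{2^k}(\B)\,p$ and $M_m(B) = p\,M_{2^k}(B)\,p$. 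That spectral invariance passes to such corners is elementary: if $x \in pM_{2^k}(\B)p$ is invertible in $pM_{2^k}(B)p$ with inverse $y$, then $x + (1-p)$ is invertible in $M_{2^k}(B)$ with inverse $y + (1-p)$, so $y + (1-p) \in M_{2^k}(\B)$ and hence $y = p(y+(1-p))p \in pM_{2^k}(\B)p$. Thus everything reduces to showing that $M_2(\B)$ is inverse-closed in $M_2(B)$.

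Next I would dispose of the case in which a pivot can be inverted. Write $T = \begin{pmatrix} a & b \\ c & d \end{pmatrix}$ with $a,b,c,d \in \B$. If $a$ happens to be invertible in $B$, then $a^{-1} \in \B$ because $\B$ is spectral invariant and $a$ is a single element of $\B$. The matrix $T$ is invertible in $M_2(B)$ if and only if the Schur complement $\sigma = d - c a^{-1} b \in \B$ is invertible in $B$, in which case $\sigma^{-1} \in \B$ by spectral invariance again, and the block-inverse formula
\begin{equation*}
T^{-1} = \begin{pmatrix} a^{-1} + a^{-1} b \sigma^{-1} c a^{-1} & -a^{-1} b \sigma^{-1} \\ -\sigma^{-1} c a^{-1} & \sigma^{-1} \end{pmatrix}
\end{equation*}
exhibits $T^{-1} \in M_2(\B)$. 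The same works verbatim if any one of the four entries can be made a pivot after multiplying $T$ by a scalar permutation matrix.

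The \emph{main obstacle} is precisely that, over a general Banach algebra $B$, an invertible matrix need have no invertible entry at all, so the Gaussian-elimination step above cannot even be started and single-element spectral invariance cannot be applied directly. To circumvent this I would replace the algebraic argument by the holomorphic functional calculus, using that a unital Banach subalgebra (with the same unit) is spectral invariant if and only if it is closed under the holomorphic functional calculus of the ambient algebra. It then suffices to prove that the resolvent $(zI - T)^{-1}$ lies in $M_2(\B)$ for every $z \notin \sigma_{M_2(B)}(T)$, for then $T^{-1}$ is recovered as such a resolvent at $z = 0$. For $|z|$ large this is immediate from the Neumann series $(zI - T)^{-1} = \sum_{k \geq 0} z^{-k-1} T^k$, which converges in the Banach algebra $M_2(\B)$; moreover the set of $z$ in the resolvent set for which $(zI - T)^{-1} \in M_2(\B)$ is open (by a local Neumann expansion) and closed (because $M_2(\B)$ is closed in $M_2(B)$), hence contains the entire unbounded component of $\C \setminus \sigma_{M_2(B)}(T)$. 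The genuine difficulty, and the step I expect to be hardest, is to force the \emph{bounded} components of the resolvent set into this set as well: here one must use spectral invariance of $\B$ essentially, via Schur reductions at parameter values that do admit an invertible pivot together with an Oka-principle type homotopy argument in the group of invertibles. This is exactly the technical content carried out in \cite{sc92}, which we therefore invoke.
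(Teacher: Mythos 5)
Your reductions are all correct as far as they go: passing to $m=2^k$ via $M_{2^k}(\,\cdot\,)=M_2\bigl(M_{2^{k-1}}(\,\cdot\,)\bigr)$, recovering general $m$ by cutting with the corner projection $p$, and the Schur-complement formula when some pivot happens to be invertible. The proof nevertheless has a genuine gap, in two places. First, the open--closed argument for the unbounded component of the resolvent set rests on the claim that $M_2(\B)$ is closed in $M_2(B)$. In the situations this paper actually uses the lemma for, $\B$ is a \emph{dense proper} Banach $*$-subalgebra carrying its own strictly finer norm (e.g.\ $S_0(\La,c)$ inside $C^*(\La,c)$), so $M_2(\B)$ is dense, not closed, in $M_2(B)$, and the "closedness" step fails. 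You also cannot fall back on the classical theorem that the spectrum in a subalgebra is obtained from the ambient spectrum by filling in holes: that theorem requires a norm-closed subalgebra, and for a merely continuously embedded Banach subalgebra it is false (take $\ell^1_w(\Z)$ with weight $e^{|n|}$ inside $C(\T)$: the spectrum of the generator is an annulus versus the circle, which is not a union of the circle with components of its complement). So even your unbounded-component claim is not established as written. Second, and decisively, the whole content of the lemma sits exactly in the step you do not carry out --- handling an invertible $T$ none of whose entries can serve as a pivot, equivalently reaching the bounded components of $\rho_{M_2(B)}(T)$ --- and the appeal to "Schur reductions \dots\ together with an Oka-principle type homotopy argument" is a gesture, not a proof.

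For the comparison you were asked to make: the paper gives no proof of this lemma at all; it is quoted from \cite{sc92}. Schweitzer's actual argument is of a different nature from your complex-analytic sketch: he characterizes spectral invariance of $\B$ in $B$ by a condition on finitely generated one-sided ideals (quantified over $n$-tuples of elements for every $n$), and that condition passes to $M_m(\B)\subset M_m(B)$ essentially by reindexing tuples, with no resolvent analysis. If you want a self-contained proof you should reproduce that equivalence; otherwise the honest course is to do what the paper does and simply cite the lemma.
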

For $g \in M_{n,d} (E)$ we define as in \cref{Subsection: The Single Generator Case} the analysis operator
\begin{equation}
\begin{split}
\modan_{g} : M_{n,d}(E) &\to M_n (A) \\
f &\mapsto \bbracket  f,g]
\end{split}
\end{equation}
which has as adjoint the operator
\begin{equation}
\begin{split}
\Phi_{g} : M_n (A) &\to M_{n,d}(E) \\
a&\mapsto ag .
\end{split}
\end{equation}
Using these we also define the frame-like operator $\modan_h^* \modan_g =:\modft_{g,h}: M_{n,d}(E) \to M_{n,d} (E)$ by
\begin{equation}
\modft_{g,h} f = \bbracket f,g ] h, \quad \text{ for $f \in M_{n,d} (E)$,}
\end{equation}
and the frame operator $\modan_g^* \modan_g =:\modft_g : M_{n,d} (E) \to M_{n,d} (E)$ by
\begin{equation}
\modft_g f = \bbracket f,g ] g \quad \text{for $f \in M_{n,d} (E)$.}
\end{equation}
As noted in \cref{Subsection: The Single Generator Case}, $\modft_g$ is a positive operator. 

For simplicity, and since it is the case we will most often consider, suppose in the following that $B$ is unital with a faithful finite trace. There is then an induced (possibly unbounded) trace on $A$ as in \cref{Section: Preliminaries}. We may lift these traces to the matrix algebras. Indeed, there are traces on $M_{n}(A)$ and $M_{d}(B)$ satisfying 
\begin{equation}
\tr_{M_{n}(A)} (\bbracket f,g]) = \tr_{M_{d}(B)}([g,f\bracketb)
\end{equation}
for all $f,g \in M_{n,d}(E)$. They are given by
\begin{equation}
\label{Equation: Relating matrix traces}
\begin{split}
\tr_{M_{n}(A)}(\bbracket f,g]) &= \frac{1}{n} \sum_{i\in \Z_n} \tr_A (\bbracket f,g ]_{i,i}), \\
\tr_{M_{d}(B)}([f,g\bracketb) &= \frac{1}{n}\sum_{i\in \Z_d} \tr_B ([f,g\bracketb_{i,i}).
\end{split}
\end{equation}
The trace on $M_d (B)$ extends to a finite trace on the whole algebra, but the same might not be true for the densely defined trace on $M_n (A)$. It is however true if $A$, and hence also $M_n (A)$, is unital. 
\begin{rmk}
    The normalization on the traces in \eqref{Equation: Relating matrix traces} is so that if $A$ is unital, then $\tr_{M_n (A)}(1_{M_n (A)}) = 1$, that is, $\tr_{M_n (A)}$ is a faithful tracial state. In general $\tr_{M_d (B)}$ will not be a state, even if $M_d (B)$ is unital.
\end{rmk}
The following lemma may be verified by elementary computations.
\begin{lemma}
	\label{Lemma: Induced trace positive}
	Let $B$ be a unital $C^*$-algebra. If $\tr_B$ is a faithful trace on $B$, then the induced mapping $\tr_{M_m (B)}$  on the matrix algebra $M_m (B)$, $m \in \N$, given by
	\begin{equation}
	\tr_{M_m (B)} (b) = \sum_{i\in \Z_m} \tr_B (b_{i,i}),
	\end{equation}
	for $b\in M_m (B)$ is also a faithful trace. 
\end{lemma}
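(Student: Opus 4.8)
The plan is to verify directly that $\tr_{M_m(B)}$ satisfies the three defining properties of a faithful trace: linearity, the trace property $\tr_{M_m(B)}(bc) = \tr_{M_m(B)}(cb)$, and faithfulness. Linearity is immediate since $\tr_{M_m(B)}$ is a finite sum of compositions of the linear map $\tr_B$ with the linear coordinate projections $b \mapsto b_{i,i}$, so I would dispatch it in a single sentence.

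For the trace property, first I would write out matrix multiplication explicitly: for $b,c \in M_m(B)$ we have $(bc)_{i,i} = \sum_{k \in \Z_m} b_{i,k} c_{k,i}$, hence
\begin{equation*}
\tr_{M_m(B)}(bc) = \sum_{i \in \Z_m} \tr_B\Big( \sum_{k\in\Z_m} b_{i,k} c_{k,i} \Big) = \sum_{i\in\Z_m}\sum_{k\in\Z_m} \tr_B(b_{i,k} c_{k,i}).
\end{equation*}
Now I would apply the trace property of $\tr_B$ in each summand, namely $\tr_B(b_{i,k} c_{k,i}) = \tr_B(c_{k,i} b_{i,k})$, and then reindex the double sum by swapping the roles of $i$ and $k$. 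This yields exactly $\sum_{k}\sum_{i} \tr_B(c_{k,i} b_{i,k}) = \tr_{M_m(B)}(cb)$, establishing the trace property. The only mild subtlety is making sure the finite double sum may be reindexed freely, which is unproblematic since the index set $\Z_m$ is finite.

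For faithfulness, suppose $b \in M_m(B)^{+}$ with $\tr_{M_m(B)}(b) = 0$. The key observation is that the diagonal entries $b_{i,i}$ of a positive matrix over $B$ are themselves positive elements of $B$: writing $b = c^*c$ for some $c \in M_m(B)$ (by \cref{Proposition: Positivity in C*-alg} applied to $M_m(B)$), one computes $b_{i,i} = \sum_{k} c_{k,i}^* c_{k,i} \in B^{+}$ as a sum of positive elements. Since $\tr_B$ is positive on $B^{+}$, the equation $\sum_{i} \tr_B(b_{i,i}) = 0$ forces $\tr_B(b_{i,i}) = 0$ for every $i$, and faithfulness of $\tr_B$ then gives $b_{i,i} = 0$ for all $i$. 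From $b_{i,i} = \sum_k c_{k,i}^* c_{k,i} = 0$ and the fact that a sum of positive elements vanishes only if each summand vanishes, I conclude $c_{k,i}^* c_{k,i} = 0$, hence $c_{k,i} = 0$ for all $k,i$, so $c = 0$ and therefore $b = 0$.

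I expect faithfulness to be the only step requiring genuine care, since it is the one place where positivity in the matrix algebra interacts nontrivially with positivity in $B$; the trace property and linearity are purely formal manipulations of finite sums. The crucial lemma to invoke is that the diagonal entries of a positive matrix over a $C^*$-algebra are positive, which follows cleanly from the factorization $b = c^*c$.
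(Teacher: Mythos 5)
Your proof is correct and complete; the paper itself omits the argument, stating only that the lemma ``may be verified by elementary computations,'' and your write-up supplies exactly those computations (the reindexing of the finite double sum for the trace property, and the factorization $b = c^*c$ showing the diagonal entries of a positive matrix are positive, which also yields positivity of the functional itself). No gaps.
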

%
We summarize the preceding discussion in the following proposition which allows us to study the $M_n (A)$-$M_d (B)$-equivalence bimodule $M_{n,d}(E)$ by studying the $A$-$B$-bimodule $E$.
\begin{prop} 
	Let $(A,B,E, \tr_B)$ be a left Gabor bimodule. Then for all $n,d\in\N$, the quadruple
	$$(M_n (A) , M_d (B) , M_{n,d} (E)  , \tr_{M_d (B)})$$ 
	with the above defined actions, inner products, and traces is also a left Gabor bimodule. Furthermore, if $(A,B,E,\tr_A , \tr_B,\A,\B,\E)$ is a left Gabor bimodule with regularity, then for all $n,d\in \N$, the septuple
	\begin{equation*}
	(M_n (A), M_d (B), M_{n,d} (E), \tr_{M_d (B)},M_n (\A), M_d (\B), M_{n,d}(\E)),
	\end{equation*}
	with the above defined actions, inner products, and traces is also a left Gabor bimodule with regularity. The analogous statements hold for right Gabor bimodules. 
\end{prop}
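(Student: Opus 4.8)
The plan is to verify that the septuple $(M_n(A), M_d(B), M_{n,d}(E), \tr_{M_d(B)}, M_n(\A), M_d(\B), M_{n,d}(\E))$ satisfies each of the four defining conditions of a left Gabor bimodule with regularity, drawing on the structural facts already assembled in the preceding discussion. Most of these conditions have in fact been established piecemeal above; the work of the proposition is really just to collect them. First I would address condition (1), that $(M_n(A), M_d(B), M_{n,d}(E), \tr_{M_d(B)})$ is a left Gabor bimodule. This requires that $M_d(B)$ be unital with a faithful finite trace, and that $M_{n,d}(E)$ be an $M_n(A)$-$M_d(B)$-equivalence bimodule. The equivalence bimodule structure is exactly what the preceding paragraph constructed and declared verified (the three conditions of \cref{Definition: Equivalence bimodule} hold by checking matrix entries and by the density argument). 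That $M_d(B)$ is unital follows since $B$ is unital, and that $\tr_{M_d(B)}$ is a faithful finite trace is precisely the content of \cref{Lemma: Induced trace positive} together with the normalization formula \eqref{Equation: Relating matrix traces}.

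\emph{Next} I would handle the regularity data, conditions (2)--(4) of the definition of a left Gabor bimodule with regularity. For condition (2), the density of $M_n(\A)$ in $M_n(A)$ and of $M_d(\B)$ in $M_d(B)$ follows entrywise from the density of $\A$ in $A$ and $\B$ in $B$, and the fact that these are Banach $*$-subalgebras lifts to the matrix algebras in the obvious way. For condition (3), that $M_{n,d}(\E)$ is an $M_n(\A)$-$M_d(\B)$-pre-equivalence bimodule, I would invoke the three inclusions displayed just before \cref{Lemma: Spectral invariance lifts to matrices}, namely
\begin{equation*}
\bbracket M_{n,d}(\E),M_{n,d}(\E)] \subset M_{n}(\A), \quad [M_{n,d}(\E),M_{n,d}(\E)\bracketb \subset M_{d}(\B),
\end{equation*}
together with the module inclusions $M_n(\A)M_{n,d}(\E) \subset M_{n,d}(\E)$ and $M_{n,d}(\E)M_d(\B) \subset M_{n,d}(\E)$, which ensure the pre-equivalence structure restricts correctly. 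The density of $M_{n,d}(\E)$ in $M_{n,d}(E)$ and the matching of enveloping $C^*$-algebras again pass entrywise from the single-generator case.

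\emph{For condition (4)}, that $M_d(\B)$ is spectral invariant in $M_d(B)$ with the same unit, I would appeal directly to \cref{Lemma: Spectral invariance lifts to matrices}: since $\B$ is spectral invariant in $B$, the lemma gives that $M_d(\B)$ is spectral invariant in $M_d(B)$, and the shared unit is immediate since $\B$ and $B$ share a unit. Finally, the right Gabor bimodule case follows by the symmetric argument, interchanging the roles of $A$ and $B$ (and hence $M_n(A)$ and $M_d(B)$) and using $\tr_{M_n(A)}$ in place of $\tr_{M_d(B)}$, noting that when $A$ is unital the induced trace on $M_n(A)$ is genuinely finite as remarked after \eqref{Equation: Relating matrix traces}.

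I do not expect any genuine obstacle here: the proposition is a bookkeeping summary whose every ingredient has already been proved or verified above. \textbf{The only point requiring slight care} is making explicit that spectral invariance lifts to the matrix level, which is precisely why \cref{Lemma: Spectral invariance lifts to matrices} was recorded; without it, condition (4) would not be automatic. The remaining verifications are entrywise reductions to the already-established single-generator facts.
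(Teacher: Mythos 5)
Your proposal is correct and matches the paper's intent exactly: the paper gives no written proof for this proposition, presenting it explicitly as a summary of the preceding discussion, and your argument simply assembles those same ingredients (the entrywise verification of the equivalence-bimodule axioms, \cref{Lemma: Induced trace positive} for the faithful finite trace, the displayed inclusions for the pre-equivalence structure, and \cref{Lemma: Spectral invariance lifts to matrices} for condition (4)). No gaps; you even silently correct the paper's typo $M_{n,d}(\E)M_d(\B) \subset M_d(\E)$ to the intended $M_{n,d}(\E)$.
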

Since our focus is on the description of frames in equivalence bimodules for Morita equivalent $C^*$-algebras, we want to do this now on the matrix algebra level and thus introduce an appropriate notion of module frames for the matrix-valued equivalence bimodules.
\begin{defn}
	\label{Definition: Modular (n,d)-matrix frames}
	Let  $g = (g_{i,j})_{i\in \Z_n , j\in \Z_d} \in M_{n,d}(E)$. We say $g$ generates a \textit{module $(n,d)$-matrix frame for $E$ with respect to $A$} if there exists $h = (h_{i,j})_{i\in \Z_n , j\in \Z_d} \in M_{n,d}(E)$ for which
	\begin{equation}
	\label{Equation: Modular (n,d) matrix frames}
	f_{r,s} = \sum_{k\in \Z_d} \sum_{l\in \Z_n} \blangle f_{r,k} , g_{l,k}\rangle h_{l,s},
	\end{equation}
	holds for all $f = (f_{i,j})_{i\in \Z_n , j\in \Z_d}\in M_{n,d}(E)$, $r \in \Z_n$, and $s \in \Z_d$.
\end{defn}
By definition of the above Hilbert $M_n (A)$-module structure on $M_{n,d}(E)$, we see that $g \in M_{n,d}(E)$ generates a module $(n,d)$-matrix frame for $E$ with respect to $A$ if and only if there is $h \in M_{n,d}(E)$ such that
\begin{equation}
\label{Equation: Single atom reconstruction in matrices}
f = \bbracket f,g ] h
\end{equation}
for all $f \in M_{n,d}(E)$. In other words, $g$ generates a module $(n,d)$-matrix frame for $E$ with respect to $A$ if and only if $g$ generates a single module frame for $M_{n,d}(E)$ with respect to $M_n (A)$. When \eqref{Equation: Single atom reconstruction in matrices} is satisfied $M_{n,d} (E)$ is finitely generated projective as an $M_n (A)$-module, so as before it follows by \cref{Proposition: Fin gen proj iff unital} that $M_d (B)$ is unital. Then $B$ is also unital. By the identity
\begin{equation*}
f = \bbracket f,g ] h = f [g,h\bracketb,
\end{equation*}
we deduce that \eqref{Equation: Single atom reconstruction in matrices} is satisfied if and only if $M_d (B)$ is unital and $[g,h\bracketb = 1_{M_d (B)}$. 
\begin{rmk}
	\label{Remark: All single generator results lift}
	By the above discussion it follows that finding module $(n,d)$-matrix frames for the $A$-$B$-equivalence bimodule $E$
	is the same as finding $g,h \in M_{n,d}(E)$ such that $[g,h\bracketb = 1_{M_d (B)}$. That is, it is the same as finding single module frames for $M_{n,d}(E)$ as an $M_n (A)$-module. By \cref{Lemma: Spectral invariance lifts to matrices}, the corresponding statement is true of finding module frames with regularity. Hence all results of \cref{Subsection: The Single Generator Case} can be carried over to the setup in this section. 
\end{rmk}
Even though all results of \cref{Subsection: The Single Generator Case} lift to the induced matrix algebra setup, we want to discuss explicitly two results relating the lifted traces. We show in \cref{Subsection: Implications for Gabor Analysis} that these two results extend the density theorems of Gabor analysis to Gabor bimodules. Since we in \cref{thm:req-on-n-d-for-mod-frame} talk about left Gabor bimodules and in \cref{Theorem: Requirements for module Riesz} talk about right Gabor bimodules, we will for the sake of avoiding confusion not have any convention on the normalization of traces in the two results.
\begin{thm}
	\label{thm:req-on-n-d-for-mod-frame}
	Let 
	$(A,B,E,  \tr_B)$ be a left Gabor bimodule. If, in addition, $A$ is unital and $g \in M_{n,d}(E)$ is such that 
	$\modft_g : E \to E$ is invertible, then
	\begin{equation}
	d\tr_B (1_B) \leq n \tr_A (1_A).
	\end{equation}
\end{thm}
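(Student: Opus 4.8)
The plan is to reduce everything to the single-generator machinery recorded in \cref{Remark: All single generator results lift} and then extract the inequality from the trace compatibility \eqref{Equation: Relating matrix traces}. First I would note that invertibility of $\modft_g$ on $M_{n,d}(E)$ says exactly that $g$ generates a single module frame for the Hilbert $M_n(A)$-module $M_{n,d}(E)$, with canonical dual atom $h = \modft_g^{-1} g \in M_{n,d}(E)$. By the matrix lift of \cref{Proposition: Fin gen proj iff unital} and the reconstruction discussion around \eqref{Equation: Single atom reconstruction in matrices}, the algebra $M_d(B)$ (and hence $B$) is unital and $[g,h\bracketb = 1_{M_d(B)}$, so in particular $[h,g\bracketb = [g,h\bracketb^{*} = 1_{M_d(B)}$.

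Next I would form the element $p := \bbracket g,h] \in M_n(A)$ and apply the matrix version of \cref{Proposition: Identity induce idempotent}: because $h$ is the \emph{canonical} dual atom, $p$ is not merely idempotent but a genuine projection, so that $0 \le p \le 1_{M_n(A)}$. The heart of the argument is then to evaluate $\tr_{M_n(A)}(p)$ in two complementary ways. Using the trace compatibility relation one gets
\begin{equation*}
\tr_{M_n(A)}(\bbracket g,h]) = \tr_{M_d(B)}([h,g\bracketb) = \tr_{M_d(B)}(1_{M_d(B)}),
\end{equation*}
and substituting $1_{M_d(B)}$ into the explicit formula \eqref{Equation: Relating matrix traces} yields $\tr_{M_d(B)}(1_{M_d(B)}) = \tfrac{1}{n}\sum_{i\in\Z_d}\tr_B(1_B) = \tfrac{d}{n}\tr_B(1_B)$. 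On the other hand, since $\tr_{M_n(A)}$ is a positive (finite, as $A$ is unital) trace, monotonicity under the order $p \le 1_{M_n(A)}$ gives $\tr_{M_n(A)}(p) \le \tr_{M_n(A)}(1_{M_n(A)})$, and \eqref{Equation: Relating matrix traces} evaluates the right-hand side as $\tfrac{1}{n}\sum_{i\in\Z_n}\tr_A(1_A) = \tr_A(1_A)$. Chaining these gives $\tfrac{d}{n}\tr_B(1_B) \le \tr_A(1_A)$, i.e. $d\,\tr_B(1_B) \le n\,\tr_A(1_A)$.

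The step I expect to require the most care is the passage from \emph{idempotent} to \emph{projection}: it is the self-adjointness $\bbracket g,h] = \bbracket g,h]^{*}$, valid precisely because $h = \modft_g^{-1}g$ is the canonical dual (as in \cref{Proposition: Identity induce idempotent}), that legitimizes the order relation $p \le 1_{M_n(A)}$ and hence the monotonicity estimate; for an arbitrary dual one only has an idempotent and the trace inequality could fail. The remaining delicacy is purely bookkeeping: one should check that the two normalizing factors $1/n$ in \eqref{Equation: Relating matrix traces} are exactly what make the trace compatibility hold and make the two identity evaluations produce the factors $d$ and $n$ in the stated form. Everything beyond this is direct substitution.
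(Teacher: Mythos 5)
Your proposal is correct and follows essentially the same route as the paper's own proof: both pass to the canonical dual atom $\modft_g^{-1}g$, use \cref{Proposition: Identity induce idempotent} to recognize $\bbracket g,\modft_g^{-1}g]$ as a projection dominated by $1_{M_n(A)}$, and then combine the trace compatibility with the normalizations in \eqref{Equation: Relating matrix traces} to obtain the inequality. The only cosmetic difference is notation ($h$ versus the paper's $u$) and your added remark on why self-adjointness of the idempotent matters, which is a fair observation but not a divergence in method.
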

\begin{proof}
	The assumption that $\modft_g$ is invertible implies $[g,g\bracketb$ is invertible. Then
	\begin{equation}
	u = \modft_g^{-1} g = g[g,g\bracketb^{-1}
	\end{equation}
	is the canonical dual frame for $M_{n,d} (E)$. We have $[g,u\bracketb = [u,g\bracketb = 1_{M_d (B)}$, and by \cref{Proposition: Identity induce idempotent}, $\bbracket  g,u]$ is a projection in $M_{n} (A)$. Using \cref{Lemma: Induced trace positive} and $\bbracket  g,u] \leq 1_{M_{n}(A)}$ in $M_{n}(A)$, as well as \eqref{Equation: Relating matrix traces}, we get
	\begin{equation*}
	\begin{split}
	d\tr_B (1_B) &= n \cdot \frac{1}{n}\sum_{i=1}^{d} \tr_B (1_B) =  n \tr_{M_{d}(B)} (1_{M_{d}(B)}) = n \tr_{M_{d}(B)} ([u,g\bracketb) \\
	&= n \tr_{M_{n}(A)}(\bbracket  g,u]) \leq n \tr_{M_{n}(A)} (1_{M_{n}(A)}) = n \cdot \frac{1}{n}\sum_{i=1}^{n} \tr_A (1_A) = n\tr_A (1_A).
	\end{split}
	\end{equation*}
\end{proof}
\begin{thm} 
	\label{Theorem: Requirements for module Riesz}
	Let 
	$(A,B,E,\tr_A)$ be a right Gabor bimodule. If, in addition, $B$ is unital and $g\in M_{n,d}(E)$ is such that $\modan_g \modan_g^{*}:M_n (A) \to M_n (A)$ is an isomorphism, then
	\begin{equation}
	d \tr_B (1_B) \geq n \tr_A (1_A).
	\end{equation}
\end{thm}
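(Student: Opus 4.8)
The plan is to mirror the proof of \cref{thm:req-on-n-d-for-mod-frame}, interchanging the roles of the two $C^*$-algebras and reversing the inequality, and to work throughout in the induced $M_n(A)$-$M_d(B)$-equivalence bimodule $M_{n,d}(E)$ as permitted by \cref{Remark: All single generator results lift}. The first step is to recast the hypothesis as an invertibility statement for the $M_n(A)$-valued inner product. Since $(A,B,E,\tr_A)$ is a right Gabor bimodule, $A$ and hence $M_n(A)$ is unital, and for $a \in M_n(A)$ one computes $\modan_g\modan_g^* a = \bbracket ag,g] = a\,\bbracket g,g]$; thus $\modan_g\modan_g^*$ is right multiplication by $\bbracket g,g]$, and on the unital algebra $M_n(A)$ it is an isomorphism precisely when $\bbracket g,g]$ is invertible. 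This is the Riesz-type counterpart of the frame-type condition ``$\modft_g$ invertible $\iff [g,g\bracketb$ invertible'' driving \cref{thm:req-on-n-d-for-mod-frame}, and it explains why the resulting inequality points the other way; the extra hypothesis that $B$ be unital is what makes $1_B$ and the (then finite) value $\tr_B(1_B)$ meaningful.

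Next I would manufacture a projection in $M_d(B)$ whose trace records $n\tr_A(1_A)$. Setting $w = \bbracket g,g]^{-1} g$ gives $\bbracket w,g] = \bbracket g,g]^{-1}\bbracket g,g] = 1_{M_n(A)}$ and, taking adjoints, $\bbracket g,w] = 1_{M_n(A)}$. Running the computation of \cref{Proposition: Identity induce idempotent} with the two inner products interchanged then shows that $p := [g,w\bracketb$ is an idempotent in $M_d(B)$. Using the module compatibility of \cref{Definition: Equivalence bimodule} together with self-adjointness of $\bbracket g,g]^{-1}$, one checks $[w,g\bracketb = [g,w\bracketb$, so that $p = p^*$ and $p$ is in fact a projection; in particular $0 \le p \le 1_{M_d(B)}$.

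Finally I would close the argument with the trace compatibility \eqref{Equation: Relating matrix traces}, the positivity (hence monotonicity) of $\tr_{M_d(B)}$ from \cref{Lemma: Induced trace positive}, and finiteness of the induced trace on the unital $B$:
\begin{equation*}
n\tr_A(1_A) = n\,\tr_{M_n(A)}(1_{M_n(A)}) = n\,\tr_{M_n(A)}(\bbracket g,w]) = n\,\tr_{M_d(B)}([w,g\bracketb) = n\,\tr_{M_d(B)}(p) \le n\,\tr_{M_d(B)}(1_{M_d(B)}) = d\tr_B(1_B).
\end{equation*}
The step I expect to be the main obstacle is the identification in the first paragraph together with the upgrade of $p$ from idempotent to projection: the inequality is produced purely by the monotonicity $\tr_{M_d(B)}(p) \le \tr_{M_d(B)}(1_{M_d(B)})$, which requires $0 \le p \le 1_{M_d(B)}$, so self-adjointness of $p$ — not merely $p^2 = p$ — is essential, and it is exactly here that the special form $w = \bbracket g,g]^{-1}g$, rather than an arbitrary dual, must be used.
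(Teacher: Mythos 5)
Your proposal is correct and follows essentially the same route as the paper: both arguments observe that the hypothesis forces $\bbracket g,g]$ to be invertible in the unital algebra $M_n(A)$, set $w=\bbracket g,g]^{-1}g$ so that $\bbracket w,g]=1_{M_n(A)}$, pass via \cref{Proposition: Identity induce idempotent} to the projection $[w,g\bracketb=[g,w\bracketb\leq 1_{M_d(B)}$ in $M_d(B)$, and conclude with the trace compatibility \eqref{Equation: Relating matrix traces} and monotonicity of $\tr_{M_d(B)}$. Your added verifications (that $\modan_g\modan_g^*$ is right multiplication by $\bbracket g,g]$, and that the idempotent is self-adjoint because $\bbracket g,g]^{-1}$ is) are details the paper leaves implicit, but the argument is the same.
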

\begin{proof}
	The assumptions imply $\bbracket  g,g]^{-1} \in M_n (A)$, so it follows as in \cref{Subsection: The Single Generator Case} that
	\begin{equation*}
	1_{M_n (A)} = \bbracket g,g]^{-1} \bbracket g, g] =\bbracket  \bbracket  g,g]^{-1} g,g],
	\end{equation*}
	and $[\bbracket  g,g]^{-1} g, g\bracketb$ is a projection in $M_{d}(B)$ by \cref{Proposition: Identity induce idempotent}. Since $B$ is unital, then, using \cref{Lemma: Induced trace positive} together with $[\bbracket  g,g]^{-1} g,g\bracketb \leq 1_{M_{d}(B)}$ in $M_{d}(B)$, as well as \eqref{Equation: Relating matrix traces}, we get
	\begin{equation*}
	\begin{split}
	n \tr_A (1_A) &=n \cdot \frac{1}{n} \sum_{i=1}^{n} \tr_A (1_A) = n \tr_{M_{n}(A)} (1_{M_{n}(A)}) = n \tr_{M_{n}(A)} (\bbracket  \bbracket  g,g]^{-1} g,g]) \\
	&= n\tr_{M_{d}(B)} ([g,\bbracket  g,g]^{-1} g\bracketb) \leq n\tr_{M_{d}(B)} (1_{M_{d}(B)}) \\
	&= n \cdot \frac{1}{n}\sum_{i=1}^{d} \tr_B (1_B) =  d\tr_B (1_B).
	\end{split}
	\end{equation*}
\end{proof}
\subsection{From a Gabor bimodule to its localization} 
\label{Subsection: Passing to the Localization}

In \cite{lu09} the existence of multi-window Gabor frames for $L^2 (\R^d)$ with windows in Feichtinger's algebra was proved through considerations on a related Hilbert $C^*$-module. Furthermore, in \cite{lu11} projections in noncommutative tori were constructed from Gabor frames with sufficiently regular windows. Thus being able to pass from an equivalence bimodule $E$ to a localization $H_E$ and back is quite important, and we dedicate this section to results on this procedure. We will interpret this in terms of standard Gabor analysis in \cref{Subsection: Implications for Gabor Analysis}, and we will explain how $L^2 (G)$, for $G$ a second countable LCA group, relates to $H_E$ for specific modules $E$ which arise in the study of twisted group $C^*$-algebras.

We denote by $(-,-)_E$ the inner product on the localization of $E$ in $\tr_A$. Concretely, we have $(f,g)_E = \tr_{A} (\blangle f,g\rangle)$. 
\begin{prop}
	\label{Proposition: Dual atom in E iff in H_E}
	Let $(A,B,E,\tr_B)$ be a left Gabor bimodule, and let $g \in E$. Then there exists an $h\in E$ such that we have $\blangle f,g\rangle h = f$ for all $f\in E$ if and only if there exist constants $C,D > 0$ such that
	\begin{equation}
	\label{Equation: Localization of modular frame}
	C  (f,f)_E \leq  ( f \langle g, g \brangle ,f )_E \leq D  (f,f )_E
	\end{equation}
	for all $f \in H_E$. In other words, $g$ is a module frame for $E$ if and only if the inequalities in \eqref{Equation: Localization of modular frame} are satisfied for some $C,D > 0$. 
\end{prop}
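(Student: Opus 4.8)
The statement connects two conditions: existence of $h \in E$ with $\blangle f,g\rangle h = f$ for all $f \in E$ (i.e.\ $g$ generates a single module frame, by \cref{Def:Single-module-frame} and the surrounding discussion), and a two-sided frame inequality for the operator of right-multiplication by $\langle g,g\brangle$ on the localization $H_E$. My plan is to reduce both sides to the single algebraic condition that $\langle g,g\brangle$ is invertible in $B$, which is the common thread running through \cref{Proposition: Gabor module Wexler-Raz}, \cref{Proposition: $E$ to $hB$ duality}, and especially \cref{Proposition: Modular Ron-Shen Duality}.

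\textbf{The forward direction.}
First I would assume there exists $h \in E$ with $\blangle f,g\rangle h = f$ for all $f$. As in the proof of \cref{Proposition: $E$ to $hB$ duality}, this forces $E$ to be finitely generated projective over $A$, so $B$ is unital, and rewriting the reconstruction via Morita equivalence gives $f = f\langle g,h\brangle$, whence $\langle g,h\brangle = 1_B$. The key point I expect to use is \cref{Proposition: Modular Ron-Shen Duality}: invertibility of $\modft_g$ is equivalent to $\langle g,g\brangle$ being invertible in $B$. Having $\langle g,g\brangle$ invertible and self-adjoint, I would note it is bounded above and below by positive scalars, i.e.\ there exist $C,D>0$ with $C\,1_B \le \langle g,g\brangle \le D\,1_B$ in $B$. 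The operator $f \mapsto f\langle g,g\brangle$ on $E$ is exactly $\modft_g$ (since $\blangle f,g\rangle g = f\langle g,g\brangle$), and it extends to a bounded self-adjoint operator on $H_E$ by \cref{prop:bimodule_localization}(ii). Passing these scalar bounds through the inner product $(\cdot,\cdot)_E = \tr_A(\blangle\cdot,\cdot\rangle)$, and using that $\tr_A$ is positive together with the order-preservation $C\,1_B \le \langle g,g\brangle \le D\,1_B \Rightarrow C\blangle f,f\rangle \le f\langle g,g\brangle \cdot(\text{paired with }f) \le D\blangle f,f\rangle$ on the dense image of $E$, yields \eqref{Equation: Localization of modular frame} on $E/N_{\tr_A}$ and then on all of $H_E$ by density and continuity.

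\textbf{The converse.}
Conversely, assuming \eqref{Equation: Localization of modular frame}, I would first observe that the bounded self-adjoint operator $T_g$ on $H_E$ given by extending $f \mapsto f\langle g,g\brangle$ satisfies $C(f,f)_E \le (T_g f,f)_E \le D(f,f)_E$, so $T_g$ is a positive invertible operator on $H_E$ bounded below by $C$. The goal is to push invertibility back down to $E$, equivalently to $B$. Here the crucial tool is the isometric $*$-homomorphism $\End_A(E)\to \mathbb{B}(H_E)$ of \cref{prop:bimodule_localization}(ii) (or \cref{Localization norm preserved}): since $\modft_g \in \End_A(E)$ maps under localization precisely to $T_g$, and this map is an isometric $*$-homomorphism hence spectrum-preserving, invertibility of $T_g$ on $H_E$ forces invertibility of $\modft_g$ on $E$. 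Once $\modft_g$ is invertible, setting $h = \modft_g^{-1} g$ (the canonical dual atom) gives $\blangle f,g\rangle h = f$ for all $f$ by \cref{Proposition: Canonical dual atom is dual atom}(3) (with the roles arranged appropriately), completing the equivalence.

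\textbf{Anticipated obstacle.}
The main subtlety, and where I would be most careful, is the bidirectional transfer of invertibility between the $C^*$-algebraic operator $\modft_g$ on $E$ and its localized counterpart $T_g$ on $H_E$. In the forward direction the scalar bounds transfer cleanly by positivity of $\tr_A$, but the converse relies essentially on the fact that localization gives an \emph{isometric} $*$-homomorphism rather than merely a contractive one, so that the spectrum (and hence invertibility) is genuinely preserved; this is exactly the content of \cref{prop:bimodule_localization}(ii) for the case $E=F$, and I would want to confirm that $\modft_g$ lies in $\End_A(E)$ and that its canonical extension to $H_E$ is literally $T_g$ before invoking spectrum-preservation. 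The routine matching of the frame bounds $C,D$ to the scalar bounds on $\langle g,g\brangle$ I would leave implicit.
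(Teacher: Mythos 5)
Your converse direction is essentially the paper's own argument: positivity and invertibility of the extended operator on $H_E$, spectral permanence via the isometric $*$-homomorphism of \cref{Localization norm preserved} to pull invertibility back to $\modft_g$, then $h=\modft_g^{-1}g$. (The paper routes the permanence argument through $B\subset\mathbb{B}(H_E)$ rather than $\End_A(E)$, which is only a cosmetic difference; also the reconstruction you want is item (1) of \cref{Proposition: Canonical dual atom is dual atom}, not item (3), though the one-line computation $\blangle f,g\rangle\modft_g^{-1}g=\modft_g^{-1}\modft_g f=f$ makes the citation moot.)

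The forward direction has a genuine gap at one step. From the existence of $h$ you correctly obtain $B$ unital and $\langle g,h\brangle=\langle h,g\brangle=1_B$, but you then pass to ``$\langle g,g\brangle$ invertible'' by appealing to \cref{Proposition: Modular Ron-Shen Duality}. That proposition only says $\modft_g$ is invertible \emph{iff} $\langle g,g\brangle$ is invertible; at this point you know neither. The hypothesis gives $\modsy_h\modan_g=\Id_E$, i.e.\ a one-sided factorization of the identity through $g$ and $h$, not invertibility of $\modft_g=\modsy_g\modan_g$. The missing link is the module Cauchy--Schwarz estimate: by \cref{Proposition: Inner product positive operator} (equivalently the second part of \cref{Proposition: Middle positivity}),
\begin{equation*}
1_B=\langle g,h\brangle\langle h,g\brangle=\langle g,\blangle h,h\rangle g\brangle\leq\Vert \blangle h,h\rangle\Vert\,\langle g,g\brangle,
\end{equation*}
which bounds $\langle g,g\brangle$ below by a positive scalar and hence makes it invertible. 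This is precisely the estimate the paper runs (directly under $\tr_A$) to produce the lower frame bound $C=\Vert\blangle h,h\rangle\Vert^{-1}$, with the upper bound $D=\Vert\blangle g,g\rangle\Vert$ coming from the same proposition. Once this is inserted, your sandwich $C\,1_B\leq\langle g,g\brangle\leq D\,1_B$ and the passage through the positive trace go through. Do note, however, that the paper deliberately avoids the ``invertibility-only'' shortcut in order to record these explicit constants, which are reused later (\cref{Remark: All module atoms are Bessel vectors}, \cref{Proposition: Modular frame and Gabor frame same bounds}); your version, which leaves the constants implicit, proves the equivalence but discards that quantitative content.
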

\begin{proof}
	Suppose first that there is an $h \in E$ such that $\blangle f,g \rangle h = f$ for all $f\in E$. By Morita equivalence this implies 
	\begin{equation*}
	f = \blangle f,g \rangle h = f \langle g,h \brangle
	\end{equation*}
	for all $f \in E$. As before, this implies $1_B = \langle g,h \brangle = \langle h,g \brangle $. Since $\tr_B$ is a positive linear functional 
	we obtain
	\begin{equation*}
	\begin{split}
	( f, f)_E &= \tr_A (\blangle f,f \rangle) \\
	&= \tr_A (\blangle f \langle g,h \brangle \langle h,g \brangle , f \rangle) \\
	&= \tr_A (\blangle f \langle g,h \langle h,g\brangle \brangle, f) \\
	&= \tr_A (\blangle f \langle g,\blangle h,h\rangle g\brangle, f\rangle)	\\
	&= \tr_B  (\langle f, f \langle g,\blangle h,h\rangle g\brangle\brangle) \\
	&\leq \tr_B (\langle f,f \langle g,g\brangle \Vert \blangle h,h \rangle \Vert \brangle) \\
	&= \Vert \blangle h,h \rangle \Vert \tr_B (\langle f,f \langle g,g \brangle \brangle) \\
	&= \Vert \blangle h,h \rangle \Vert \tr_A (\blangle f \langle g,g \brangle, f\rangle ) \\
	&= \Vert \blangle h,h \rangle \Vert (f \langle g,g \brangle , f)_E,
	\end{split}
	\end{equation*}
	for all $f \in E$, where we have used \cref{Proposition: Inner product positive operator} to deduce 
	\begin{equation*}
	\langle g, \blangle h,h \rangle g\brangle \leq \Vert \blangle h,h \rangle \Vert \langle g,g \brangle.
	\end{equation*} 
	We then get the lower frame bound with $C= \Vert \blangle h,h \rangle \Vert^{-1}$, that is
	\begin{equation*}
	\frac{1}{\Vert \blangle h,h \rangle \Vert} (f,f)_E \leq  ( f \langle g , g\brangle ,f )_E
	\end{equation*}
	for all $f \in E$. By \cref{Localization norm preserved} all intermediate steps involve operators that extend to bounded operators on $ H_E$, so we may extend by continuity. We get the upper frame bound by use of \cref{Proposition: Inner product positive operator} in the following manner
	\begin{equation*}
	\begin{split}
	(f \langle g,g \brangle,f )_E &= \tr_A (\blangle f \langle g,g \brangle, f\rangle) \\
	&= \tr_A ( \blangle f \langle g,g\brangle^{1/2} , f \langle g,g\brangle^{1/2}\rangle ) \\
	&\leq \| \langle g,g \brangle^{1/2} \|^{2} \tr_A (\blangle f,f \rangle) \\
	&= \Vert \langle g,g\brangle \Vert \tr_A (\blangle f,f \rangle) \\
	&= \Vert \blangle g,g \rangle \Vert (f,f)_E,
	\end{split}
	\end{equation*}
	for all $f \in E$. Once again all intermediate steps involve operators that extend to bounded operators on $H_E$ by \cref{Localization norm preserved}, so we may extend the result to all of $H_E$. Thus we have shown that
	\begin{equation*}
	\frac{1}{\Vert \blangle h,h \rangle \Vert} (f,f )_E \leq (  f \langle g,g \brangle ,f )_E \leq \Vert \blangle g,g \rangle \Vert (f,f)_E \\
	\end{equation*}
	for all $f \in H_E$.
	
	Conversely, suppose there are $C,D >0$ such that
	\begin{equation*}
	C (f,f)_E \leq ( f\langle g,g \brangle,f )_E \leq D ( f,f )_E
	\end{equation*}
	for all $f \in H_E$. We wish to show that this implies there exist $h \in E$ such that  $\blangle f,g \rangle h = f$ for all $f \in E$. The assumption implies that $f \mapsto f \langle g,g \brangle$ is a positive, invertible operator on $H_E$. By \cref{Proposition: C*-subalgebra inverse closed} it follows that $\langle g,g \brangle$ is invertible in $B$. 
	Thus $f \mapsto f \langle g,g \brangle$ is a positive, invertible operator on $E$ as well. Hence the operator
	\begin{equation*}
	\begin{split}
	\modft_g &: E \to E \\
	f &\mapsto \blangle f,g \rangle g = f 
	\langle g,g\brangle
	\end{split}
	\end{equation*}
	is invertible with inverse
	\begin{equation*}
	\modft_g^{-1} f = f \langle g,g\brangle^{-1}.
	\end{equation*}
	Define $h := \modft_g^{-1} g$, and let $f \in E$ be arbitrary. Then we have
	\begin{equation*}
	\begin{split}
	\blangle f,g \rangle h &= \blangle f,g \rangle \modft_g^{-1} g = \modft_g^{-1} (\blangle f,g \rangle g ) = \modft_g^{-1} \modft_g f = f,
	\end{split}
	\end{equation*}
	from which the result follows.
\end{proof}

We are interested in module frames and module Riesz sequences, and their relationship to frames and Riesz sequences in Gabor analysis for LCA groups.
To get results on Riesz sequences in \cref{Subsection: Implications for Gabor Analysis} we need a module version of Riesz sequences which, when localized, yields the Riesz sequences we know from Gabor analysis. For this we let $A$ be unital with a faithful trace $\tr_A$, and we need to localize $A$ as a Hilbert $A$-module in the trace $\tr_A$. We let $(a_1, a_2)_A := \tr_A (a_1 a_2^*)$. The completion of $A$ in this inner product will be denoted $H_A$, and the action of $A$ on $H_A$ is the continuous extension of the multiplication action (from the right) of $A$ on itself. 
\begin{prop} 
	\label{Proposition: Modular Riesz sequences}
	Let $(A,B,E,\tr_A)$ be a right Gabor bimodule, and let $g \in E$. Then $\modan_g \modan_g^*: A \to A$ is an isomorphism if and only if there exist $C,D > 0$ such that for all $a \in  A$ it holds that
	\begin{equation}
	\label{Equation: Riesz trace inequality}
	C (a,a)_A \leq ( ag,ag )_E \leq D  (a,a)_A. 
	\end{equation}
	\begin{proof}
		First suppose $\modan_g \modan_g^{*} :A \to A$ is an isomorphism. Then, since by \cref{Proposition: Middle positivity}
		\begin{equation*}
		\blangle ag,ag \rangle = a \blangle g,g \rangle a^* \leq \Vert \blangle g,g \rangle \Vert aa^*,
		\end{equation*}
		we may deduce
		\begin{equation*}
		(ag,ag)_A = \tr_A (\blangle ag ,ag\rangle)  \leq \Vert \blangle g,g \rangle \Vert \tr_A (aa^*) = \Vert \blangle g,g \rangle \Vert (a,a)_A.
		\end{equation*}
		Hence in \eqref{Equation: Riesz trace inequality} we may set $D = \Vert \blangle g,g \rangle \Vert$. Since $\modan_g \modan_g^* : A \to A$ is an isomorphism and $\modan_g \modan_g^{*} a = a\blangle g,g\rangle $, it follows that there is $\blangle g,g\rangle^{-1} \in A$. Then 
		\begin{equation*}
		\begin{split}
		(a,a)_A &= \tr_A(aa^* )_A\\
		&= \tr_A (a\blangle g,g\rangle^{1/2}\blangle g,g\rangle^{-1}\blangle g,g\rangle^{1/2} a^* ) \\
		&\leq \Vert \blangle g,g\rangle^{-1}\Vert \tr_A (a \blangle g,g \rangle a^*) \\
		&= \Vert \blangle g,g\rangle^{-1}\Vert \tr_A (\blangle ag,ag \rangle ) \\
		&= \Vert \blangle g,g\rangle^{-1}\Vert (ag,ag)_E,
		\end{split}
		\end{equation*}
		which implies that we may set $C = \Vert \blangle g,g \rangle^{-1} \Vert^{-1}$ in \eqref{Equation: Riesz trace inequality}. All intermediate steps extend to $H_A$ by \cref{Localization norm preserved}.
		
		Suppose now that \eqref{Equation: Riesz trace inequality} is satisfied.
		The lower inequality in \eqref{Equation: Riesz trace inequality} tells us that for all $a \in A$,
		\begin{equation*}
		\begin{split}
		(a(\blangle g,g \rangle  - C) , a )_A &= \tr_A (a (\blangle g,g \rangle - C)a^*) \\
		&= \tr_A ( a\blangle g,g \rangle a^*) - C\tr_A (aa^*) \\
		&= \tr_A (\blangle ag,ag \rangle ) - C \tr_A (aa^*) \\
		&= (ag,ag)_E - C (a,a)_A \geq 0.
		\end{split}
		\end{equation*}
		Note that we need the upper inequality of \eqref{Equation: Riesz trace inequality} to extend all intermediate steps to $H_A$ via \cref{Localization norm preserved}. It follows that $\blangle g,g\rangle$ is a positive invertible operator on $H_A \supset A$. By \cref{Proposition: C*-subalgebra inverse closed} it follows that $\blangle g,g\rangle$ is invertible in $A$. Then, since 
		\begin{equation*}
		\modan_g \modan_g^{*} a = a\blangle g,g \rangle ,
		\end{equation*}
		it follows that $\modan_g \modan_g^{*}:A \to A$ is an isomorphism. 
	\end{proof}
\end{prop}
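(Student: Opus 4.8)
The plan is to reduce both conditions to a single algebraic statement: that $\blangle g,g \rangle$ is invertible in $A$. First I would compute $\modan_g \modan_g^*$ explicitly. Since the synthesis operator $\modan_g^*$ sends $a \mapsto a \cdot g$ and the analysis operator $\modan_g$ sends $f \mapsto \blangle f,g \rangle$, we obtain $\modan_g \modan_g^* a = \blangle ag,g \rangle = a\blangle g,g \rangle$, so $\modan_g \modan_g^*$ is nothing but right multiplication by $\blangle g,g \rangle$ on the unital algebra $A$. Consequently $\modan_g \modan_g^*$ is an isomorphism exactly when $\blangle g,g \rangle$ is invertible in $A$, and the task becomes relating this invertibility to the frame-type inequalities.

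For the forward implication I would assume $\blangle g,g \rangle$ invertible and read off the two bounds from \cref{Proposition: Middle positivity}. The upper bound is immediate: $\blangle ag,ag \rangle = a\blangle g,g \rangle a^* \leq \Vert \blangle g,g \rangle \Vert\, aa^*$, and applying the order-preserving positive functional $\tr_A$ gives $(ag,ag)_E \leq \Vert \blangle g,g \rangle \Vert\, (a,a)_A$, so one may take $D = \Vert \blangle g,g \rangle \Vert$. For the lower bound I would factor $aa^* = \big(a\blangle g,g \rangle^{1/2}\big)\blangle g,g \rangle^{-1}\big(\blangle g,g \rangle^{1/2}a^*\big)$ and apply the inequality $b^* c b \leq \Vert c \Vert\, b^* b$ with $c = \blangle g,g \rangle^{-1}$ and $b = \blangle g,g \rangle^{1/2} a^*$; after applying $\tr_A$ this yields $(a,a)_A \leq \Vert \blangle g,g \rangle^{-1} \Vert\, (ag,ag)_E$, so $C = \Vert \blangle g,g \rangle^{-1} \Vert^{-1}$ works. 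These are routine operator-inequality manipulations, and faithfulness of $\tr_A$ is not even needed here.

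The converse is the substantive part. Assuming \eqref{Equation: Riesz trace inequality}, the lower inequality rewrites, for all $a \in A$, as $\big(a(\blangle g,g \rangle - C),a\big)_A = \tr_A\big(a(\blangle g,g \rangle - C)a^*\big) \geq 0$, which says that right multiplication by $\blangle g,g \rangle - C$ is a positive operator on the dense subspace $A \subset H_A$. The main obstacle is to promote this Hilbert-space positivity to genuine invertibility of $\blangle g,g \rangle$ inside the $C^*$-algebra $A$. My plan is to regard $A$ as a left Hilbert $A$-module with inner product $\blangle a_1,a_2 \rangle = a_1 a_2^*$, so that right multiplication by an element of $A$ is an adjointable module operator; by \cref{Localization norm preserved} the resulting extension embeds $A$ injectively as a unital $C^*$-subalgebra of $\mathbb{B}(H_A)$ containing $\Id$. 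The upper bound in \eqref{Equation: Riesz trace inequality} is what guarantees every intermediate expression extends continuously to $H_A$, and combined with the displayed positivity it shows that right multiplication by $\blangle g,g \rangle$ dominates $C\,\Id$ and is therefore invertible in $\mathbb{B}(H_A)$. Invoking the spectral invariance of a unital $C^*$-subalgebra, \cref{Proposition: C*-subalgebra inverse closed}, the inverse lies back in the image of $A$, so $\blangle g,g \rangle$ is invertible in $A$, with $\modan_g \modan_g^*$ then inverted by $a \mapsto a\blangle g,g \rangle^{-1}$. The one point requiring care is exactly this transfer of invertibility from operators on $H_A$ to the algebra, where faithfulness of $\tr_A$ (ensuring injectivity of the localization) and spectral invariance are both essential.
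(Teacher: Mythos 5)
Your proposal is correct and follows essentially the same route as the paper: the same identification $\modan_g\modan_g^* a = a\blangle g,g\rangle$, the same constants $D = \Vert\blangle g,g\rangle\Vert$ and $C = \Vert\blangle g,g\rangle^{-1}\Vert^{-1}$ obtained via \cref{Proposition: Middle positivity}, and the same converse argument passing through positivity on $H_A$ and then back into $A$ via \cref{Localization norm preserved} and \cref{Proposition: C*-subalgebra inverse closed}. Your framing of both conditions as equivalent to invertibility of $\blangle g,g\rangle$ in $A$ is exactly what the paper does implicitly, and your remark that faithfulness of $\tr_A$ is only needed in the converse is a fair (and accurate) observation.
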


	Both \cref{Proposition: Dual atom in E iff in H_E} and \cref{Proposition: Modular Riesz sequences} were proved for Gabor bimodules, so by \cref{Remark: All single generator results lift} the results lift to the corresponding matrix setting of \cref{Subsection: Extending to several generators}.

\begin{rmk}
	\label{Remark: All module atoms are Bessel vectors}
	
	Note that in the proofs of the two preceding results the upper bounds in \eqref{Equation: Localization of modular frame} and \eqref{Equation: Riesz trace inequality} were both satisfied with $D = \Vert \blangle g,g\rangle \Vert$. We will see in \cref{Subsection: Implications for Gabor Analysis} that in the Gabor analysis setting, this means that all atoms coming from the Hilbert $C^*$-module are Bessel vectors for the localized frame system. 
\end{rmk}
\begin{rmk}
	The two preceding results actually have shorter proofs using \cref{Proposition: C*-subalgebra inverse closed} in a more direct way, but these proofs would not give us values for $C$ and $D$ in \eqref{Equation: Localization of modular frame} and \eqref{Equation: Riesz trace inequality}, only the existence. The values of the constants are of interest on their own, see \cref{Subsection: Implications for Gabor Analysis}.
\end{rmk}
For use in \cref{Subsection: Implications for Gabor Analysis}, we introduce the following notion.
\begin{defn}
	\label{Definition: Modular Riesz sequence}
	Let $(A,B,E,\tr_A)$ be a right Gabor bimodule, and let $g \in E$. If $\modan_g \modan_g^{*}:A \to A$ is an isomorphism, we say $g$ generates a \textit{module Riesz sequence for $E$ with respect to $A$.} If $h \in M_{n,d}(E)$ generates a module Riesz sequence for $M_{n,d}(E)$ with respect to $M_n (A)$, we will also say that $h$ generates a \textit{module $(n,d)$-matrix Riesz sequence for $E$ with respect to $A$}.
\end{defn}

\section{Applications to Gabor analysis}
\label{Subsection: Implications for Gabor Analysis}
In this section we show how the above results reproduce some of the core results of Gabor analysis for LCA groups. We will see how some of the cornerstones of Gabor analysis on LCA groups are trivial consequences of the above framework. Of particular interest is the reproduction of some of the main results of \cite{jalu18duality} on 
$n$-multiwindow $d$-super Gabor frames with windows in the Feichtinger algebra. Indeed we will show the corresponding results for localized module $(n,d)$-matrix frames, which generalize $n$-multiwindow $d$-super Gabor frames.

To present the results we will need to explain how time frequency analysis on LCA groups relates to Morita equivalence of twisted group $C^*$-algebras. In the interest of brevity, we refer the reader to \cite{jalu18duality} for a more in-depth treatment of time frequency analysis and its relation to twisted group $C^*$-algebras, and to \cite{ja18} for a survey on the Feichtinger algebra.

Throughout this section, we fix a second countable LCA group $G$ and let $\widehat{G}$ be its dual group. We fix a Haar measure $\mu_G$ on $G$ and normalize the Haar measure $\mu_{\widehat{G}}$ on $\widehat{G}$ such that the Plancherel theorem holds. By $\La$ we denote a closed subgroup of the time-frequency plane $G \times \widehat{G}$. The induced topologies and group multiplications on $\La$ and $(G\times \widehat{G})/\La$ turn them into LCA groups as well, and we may equip them with their respective Haar measures. Having fixed the Haar measures on $G, \widehat{G}$, and $\La$, we will assume $(G \times \widehat{G})/\La$ is equipped with the unique Haar measure such that Weil's formula holds, that is, such that for all $f \in L^1 (G\times \widehat{G})$ we have
\begin{equation*}
\int_{G\times \widehat{G}} f(\xi) d\mu_{G\times \widehat{G}} = \int_{(G\times \widehat{G})/\La} \int_{\La} f(\xi + \la ) d\mu_{\La}(\la) d\mu_{(G\times \widehat{G})/\La}(\Dot{\xi}), \quad \text{$\Dot{\xi} = \xi + \La$}.
\end{equation*}
In this setting we can define the \textit{size of $\La$} by
\begin{equation*}
s(\La) := \int_{(G \times \widehat{G})/\La} 1 d\mu_{(G \times \widehat{G})/\La}.
\end{equation*}
Note that $s (\La)$ is finite if and only if $\La$ is cocompact in $G \times \widehat{G}$. 

For any $x \in G$ and $\omega \in \widehat{G}$ we define the translation operator (or time shift) $T_x$ by
\begin{equation*}
T_x f(t) = f(t-x), \quad t\in G,
\end{equation*}
and the modulation operator (or frequency shift) $E_{\omega}$ by
\begin{equation*}
E_{\omega}f(t) = \omega (t) f(t), \quad t \in G.
\end{equation*}
These operators are unitary on $L^2 (G)$, and satisfy the commutation relation
\begin{equation}
E_{\omega} T_x = \omega (x) T_x E_{\omega}.
\end{equation}
For any $\xi = (x,\omega) \in G \times \widehat{G}$ we may then define the time-frequency shift operator
\begin{equation}
\pi (\xi) = \pi (x ,\omega ) = E_{\omega}T_x.
\end{equation}
We define the $2$-cocycle
\begin{equation}
\begin{split}
c: (G \times \widehat{G}) \times (G \times \widehat{G}) &\to \mathbb{T} \\
(\xi_1 , \xi_2 ) &\mapsto \overline{\omega_2 (x_1)}
\end{split}
\end{equation}
for $\xi_1 = (x_1 , \omega_1), \xi_2 = (x_2 , \omega_2) \in G \times \widehat{G}$. Note then that
\begin{equation}
\pi (\xi_1) \pi(\xi_2) = c(\xi_1 , \xi_2) \pi(\xi_1 + \xi_2),
\end{equation}
and
\begin{equation}
\pi (\xi_1)\pi(\xi_2) = c(\xi_1,\xi_2)\overline{c (\xi_2 , \xi_1)}\pi(\xi_2)\pi(\xi_1) = c_s (\xi_1, \xi_2)\pi(\xi_2)\pi(\xi_1),
\end{equation}
where we have introduced the symplectic cocycle $c_s$ by 
\begin{equation}
\begin{split}
c_s : (G \times \widehat{G}) \times (G \times \widehat{G}) &\to \mathbb{T} \\
(\xi_1 , \xi_2 ) &\mapsto \overline{\omega_2 (x_1)} \omega_1 (x_2).
\end{split}
\end{equation}
We also remark that
\begin{equation}
\pi (\xi)^* = c (\xi,\xi)\pi (-\xi)
\end{equation}
for all $\xi \in G \times \widehat{G}$.

Using the symplectic cocycle $c_s$ we define for a closed subgroup $\La \subset G \times \widehat{G}$ the \textit{adjoint subgroup} $\La^{\circ}$ by
\begin{equation}
\La^{\circ} := \{ \xi \in G \times \widehat{G} \mid c_s (\xi , \la) = 1 \quad \forall \la\in \La\}.
\end{equation}  
Then $(\La^{\circ} )^{\circ} = \La$ and $\widehat{\La^{\circ}} \cong (G\times \widehat{G})/\La$, see for example \cite{JaLe16}. Note that $\La$ is cocompact if and only if $\La^{\circ}$ is discrete. With these identifications we put on $\La^{\circ}$ the Haar measure such that the Plancherel theorem holds with respect to $\La^{\circ}$ and $(G\times \widehat{G})/\La$. 

We define the \textit{short time Fourier transform} with respect to $g \in L^2 (G)$ as the operator
\begin{equation}
\begin{split}
V_g : L^2 (G) &\to L^2 (G\times \widehat{G})\\
V_g f(\xi) &= \langle f, \pi (\xi) g \rangle, 
\end{split}
\end{equation}
for $\xi \in G \times \widehat{G}$. The \textit{Feichtinger algebra} $S_0 (G)$ is then defined by
\begin{equation}
S_0 (G) := \{f \in L^2 (G) \mid V_f f \in L^1 (G \times \widehat{G})\}. 
\end{equation}
A norm on $S_0 (G)$ is given by
\begin{equation}
\Vert f \Vert_{S_0 (G)} := \Vert V_g f\Vert_{L^1 (G \times \widehat{G})}, \quad \text{for some $g \in S_0 (G)\setminus \{0\}$.}
\end{equation}
It is a nontrivial fact that all elements of $S_0 (G)\setminus \{0\}$ determine equivalent norms on $S_0 (G)$. In case $G$ is discrete it is known that $S_0 (G) = \ell^1 (G)$ with equivalent norms. Furthermore, $S_0 (G)$ consists of continuous functions and is dense in both $L^1 (G)$ and $L^2 (G)$.

For two functions $F_1,F_2$ over $\La \subset G\times \widehat{G}$ we define the twisted convolution by
\begin{equation}
F_1 \natural F_2 (\la) := \int_{\La} F_1 (\la') F_2 (\la - \la') c(\la' , \la - \la') \dif \la',
\end{equation}
and the twisted involution $F_1^* (\la) := c(\la, \la) \overline{F_1(-\la)}$. In \cite{jalu18duality} it was shown that $S_0 (\La)$ is a Banach $*$-$D$-algebra for some $D>0$ when equipped with twisted convolution, and indeed it is possible to choose an equivalent norm on $S_0 (G)$ such that it becomes a Banach $*$-algebra. We denote the resulting Banach $*$-algebra by $S_0 (\La,c)$. Using this we may then define two Banach $*$-algebras
\begin{equation}
\label{Equation: Definition left/right Feichtinger algebras}
\begin{split}
&\A := \{ \mathbf{a} \in \mathbb{B}(L^2 (G)) \mid \mathbf{a} = \int_{\La} a (\la) \pi (\la) \dif \la , a\in S_0 (\La) \}, \\
&\B := \{ \mathbf{b}\in \mathbb{B}(L^2 (G)) \mid \mathbf{b} = \int_{\La^{\circ}} b(\la^{\circ}) \pi (\la^{\circ})^* \dif \la^{\circ} , b \in S_0 (\La^{\circ}) \}. 
\end{split}
\end{equation}
Note that $\A \cong S_0 (\La,c)$ and $\B \cong S_0 (\Lao , \overline{c})$ via the natural maps. We will use these identifications without mention in the sequel.

The following was proved in \cite{jalu18duality}.
\begin{lemma}
	Suppose $\La \subset G\times \widehat{G}$ is a closed subgroup. Then $\xi \to \pi (\xi)$ is a faithful unitary $c$-projective representation of $\La$. As a result, the integrated representation is a non-degenerate $*$-representation of $S_0 (\La , c)$. 
\end{lemma}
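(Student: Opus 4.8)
The plan is to verify the two assertions in turn, leaning on the cocycle identities already recorded above. \textbf{Unitarity and the projective relation.} Each $\pi(\xi) = E_\omega T_x$ is a composition of the unitaries $E_\omega$ and $T_x$, hence unitary, and the identity $\pi(\xi_1)\pi(\xi_2) = c(\xi_1,\xi_2)\pi(\xi_1+\xi_2)$ derived earlier shows that the restriction of $\xi \mapsto \pi(\xi)$ to $\La$ is a $c$-projective representation, with multiplier the restriction of $c$ to $\La \times \La$. Strong continuity of $\xi \mapsto \pi(\xi)$ follows from the strong continuity of translation and modulation on $L^2(G)$, which I would record for use in the last step.

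\textbf{Faithfulness.} It suffices to show that $\pi(\xi) \in \C\,\Id$ forces $\xi = 0$: if $\pi(\xi_1) = \pi(\xi_2)$ then, using $\pi(\xi_2)^{-1} = c(\xi_2,\xi_2)\pi(-\xi_2)$ together with the cocycle relation, $\pi(\xi_1 - \xi_2)$ is a scalar multiple of the identity, so $\xi_1 = \xi_2$ once the claim is established. Suppose then $\pi(x,\omega) = \lambda\,\Id$, i.e. $\omega(t)f(t-x) = \lambda f(t)$ for all $f \in L^2(G)$ and almost every $t$. If $x \neq 0$, choose a neighborhood $U$ of the identity with $U \cap (x+U) = \emptyset$ and $f$ supported in $U$; then $\pi(x,\omega)f$ is supported in $x+U$, disjoint from $U$, forcing $\lambda f = 0$ and contradicting that $\pi(x,\omega)$ is unitary. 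Hence $x = 0$, and then $\omega(t) = \lambda$ for almost every $t$; continuity of $\omega$ and $\omega(0) = 1$ give $\omega \equiv 1$ and $\lambda = 1$, so $\xi = 0$.

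\textbf{The integrated representation.} Write $\rho(a) = \int_{\La} a(\la)\pi(\la)\,d\la$ for $a \in S_0(\La,c)$. Since $a \in S_0(\La) \subset L^1(\La)$, Fubini applies, and combining the cocycle relation with the substitution $\mu = \la + \la'$ gives $\rho(a)\rho(b) = \rho(a \natural b)$, while $\pi(\la)^* = c(\la,\la)\pi(-\la)$ together with $c(-\la,-\la) = c(\la,\la)$ and the substitution $\la \mapsto -\la$ gives $\rho(a)^* = \rho(a^*)$; thus $\rho$ is a $*$-representation. For non-degeneracy, pick $a_i \geq 0$ in $S_0(\La)$ with $\int_{\La} a_i\,d\la = 1$ and support shrinking to $\{0\} \subset \La$; strong continuity of $\la \mapsto \pi(\la)$ then yields $\rho(a_i)\zeta \to \zeta$ for every $\zeta \in L^2(G)$, so $\overline{\rho(S_0(\La,c))\,L^2(G)} = L^2(G)$.

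The main obstacle I anticipate is the non-degeneracy step: one must produce an approximate identity living inside $S_0(\La,c)$ rather than merely in $L^1(\La,c)$, and justify $\rho(a_i)\zeta \to \zeta$ from strong continuity of the projective representation carefully enough to conclude density. The faithfulness localization argument also requires the mild point-set care of separating $U$ from $x+U$ in a general second countable LCA group, though this is routine once Hausdorffness and continuity of the group operations are invoked.
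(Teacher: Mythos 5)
The paper does not actually prove this lemma; it is quoted verbatim with a pointer to \cite{jalu18duality}, so there is no in-paper argument to compare against. Your proposal is a correct and essentially standard proof of the statement. The projective relation and unitarity follow from the identities the paper records; your faithfulness argument (showing $\pi(\xi)$ scalar forces $\xi=0$ by first separating supports to kill $x$ and then using continuity of the character to kill $\omega$) is the right notion of faithfulness for a projective representation and is sound; and the computations $\rho(a)\rho(b)=\rho(a\natural b)$ and $\rho(a)^*=\rho(a^*)$ match the twisted convolution and involution as defined in the paper (using $c(-\la,-\la)=c(\la,\la)$, which holds for this cocycle). The one obstacle you flag, producing an approximate identity inside $S_0(\La)$ rather than merely $L^1(\La)$, is not a real gap: the Feichtinger algebra contains, for every compact neighborhood $U$ of the identity, a nonnegative compactly supported plateau-type function supported in $U$ with prescribed integral (see the survey \cite{ja18} cited in the paper), so the net $a_i$ exists and strong continuity of $\la\mapsto\pi(\la)$ gives $\rho(a_i)\zeta\to\zeta$; alternatively, non-degeneracy follows by noting that $\la\mapsto\langle\pi(\la)\zeta,\zeta\rangle$ is continuous and equals $\Vert\zeta\Vert^2$ at $\la=0$. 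Either way your argument closes.
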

We may then obtain the minimal universal enveloping algebra $C_r^* (\La , c)$ of $S_0 (\La , c)$ through the integrated representation of $S_0 (\La , c)$ on $L^2 (G)$, that is, the representation
\begin{equation}
\mathbf{a}\cdot f = \int_{\La} a(\la) \pi (\la) f d\la,
\end{equation}
for $\mathbf{a}\in S_0 (\La, c)$ and $f \in L^2 (G)$. As $\La$ is abelian, hence amenable, the minimal and maximal enveloping algebras coincide, so we write $C^* (\La, c)$ for the universal enveloping algebra of $S_0 (\La, c)$. We do the same for $S_0 (\La^{\circ}, \overline{c})$, 
and denote its universal enveloping $C^*$-algebra by $C^* (\La^{\circ} , \overline{c})$.  
\begin{thm}[\cite{ri88}]
	The twisted group $C^*$-algebras $C^* (\La, c)$ and $C^* (\La^{\circ} ,  \overline{c})$ are Morita equivalent. 
\end{thm}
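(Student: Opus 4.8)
The plan is to exhibit an explicit imprimitivity bimodule realizing the equivalence, following Rieffel's construction of Heisenberg modules. The candidate is built on the Feichtinger algebra: take $\E := S_0(G)$ and complete it in a suitable norm to obtain a Hilbert $C^*$-module $E$. The left action of $\A \cong S_0(\La,c)$ and the right action of $\B \cong S_0(\Lao,\overline{c})$ are the integrated representations already appearing in \eqref{Equation: Definition left/right Feichtinger algebras}, namely $\mathbf{a}\cdot f = \int_{\La} a(\la)\pi(\la) f \, d\la$ and $f\cdot \mathbf{b} = \int_{\Lao} b(\lao)\pi(\lao)^* f \, d\lao$. First I would define the two inner products through the short-time Fourier transform: for $f,g\in S_0(G)$ the left $\A$-valued inner product $\blangle f,g\rangle$ is the element of $C^*(\La,c)$ whose coefficient function is the restriction of $V_g f$ to $\La$, i.e. $\la\mapsto \langle f,\pi(\la)g\rangle$, and symmetrically $\langle f,g\brangle$ is the element of $C^*(\Lao,\overline{c})$ with coefficient function $\lao\mapsto \langle \pi(\lao) g,f\rangle$ on $\Lao$ (up to the conjugate-cocycle convention).

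That these inner products are well defined, i.e. that they actually land in $S_0(\La,c)$ and $S_0(\Lao,\overline{c})$, rests on the regularity of the Feichtinger algebra: $V_g f\in S_0(G\times\widehat{G})$ whenever $f,g\in S_0(G)$, together with the restriction theorem for $S_0$ to the closed subgroups $\La$ and $\Lao$. With the inner products in hand I would verify the three conditions of \cref{Definition: Equivalence bimodule}. The compatibility condition, $\langle af,g\brangle = \langle f,a^*g\brangle$ and $\blangle fb,g\rangle = \blangle f,gb^*\rangle$, follows from the covariance of the STFT under time-frequency shifts together with the cocycle relations $\pi(\xi_1)\pi(\xi_2)=c(\xi_1,\xi_2)\pi(\xi_1+\xi_2)$ and $\pi(\xi)^*=c(\xi,\xi)\pi(-\xi)$ recorded above; these are bookkeeping computations. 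The density condition $\overline{\blangle E,E\rangle}=A$ and $\overline{\langle E,E\brangle}=B$ follows from nondegeneracy of the integrated representation and the existence of well-localized windows in $S_0(G)$ furnishing an approximate identity, so that the span of the inner products is dense in each twisted group algebra.

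The crux, and the main obstacle, is the associativity condition $\blangle f,g\rangle h = f\langle g,h\brangle$ for $f,g,h\in S_0(G)$. Unwinding the definitions, the left-hand side equals $\int_{\La}\langle f,\pi(\la)g\rangle\,\pi(\la)h\,d\la$, while the right-hand side equals $\int_{\Lao}\langle \pi(\lao)h,g\rangle\,\pi(\lao)^* f\,d\lao$ (with the appropriate cocycle twists). The identity of these two expressions is precisely the \emph{Fundamental Identity of Gabor Analysis}, a Poisson-summation / Weil-formula relation that transfers an integral of time-frequency coefficients over $\La$ into one over the adjoint subgroup $\Lao$. The normalization by the size $s(\La)$ and the Plancherel matching between $\Lao$ and $(G\times\widehat{G})/\La\cong\widehat{\Lao}$ enter exactly here; the rigorous justification requires that $V_g f$ and the relevant products lie in $L^1$ of the quotient so that Weil's formula applies, which is where the $S_0$-regularity is indispensable. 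Establishing this identity is the heart of the proof.

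Finally, having verified that $\E=S_0(G)$ is an $\A$-$\B$-pre-equivalence bimodule, I would pass to the $C^*$-completions. The enveloping $C^*$-algebras of $\A$ and $\B$ are $A=C^*(\La,c)$ and $B=C^*(\Lao,\overline{c})$ by construction, and completing $\E$ in the norm induced by either inner product yields a genuine $A$-$B$-equivalence bimodule $E$ satisfying all conditions of \cref{Definition: Equivalence bimodule}. By definition of Morita equivalence, the existence of such an equivalence bimodule establishes that $C^*(\La,c)$ and $C^*(\Lao,\overline{c})$ are Morita equivalent.
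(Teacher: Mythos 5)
Your proposal follows essentially the same route the paper takes (by citation to Rieffel): realize the equivalence via the Heisenberg module built on $S_0(G)$ with the STFT-restriction inner products of \eqref{Equation: Left Gabor inner product}--\eqref{Equation: Right Gabor inner product} and the integrated actions, with the Fundamental Identity of Gabor Analysis (Poisson summation/Weil's formula, justified by $S_0$-regularity) supplying the associativity axiom. The one ingredient you leave implicit is positivity of $\blangle f,f\rangle$ and $\langle f,f\brangle$ in the respective $C^*$-algebras, which must be established before one can complete $S_0(G)$ to a Hilbert $C^*$-module; in Rieffel's argument this is deduced from the associativity condition together with faithfulness of the integrated representations.
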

Indeed, $S_0 (G)$ becomes a pre-equivalence bimodule between $\A$ and $\B$ as in \cref{Definition: Equivalence bimodule} when equipped with the inner products
\begin{equation}
\label{Equation: Left Gabor inner product}
\blangle f,g \rangle = \int_{\La} \langle f, \pi (\la) g \rangle \pi (\la) d\la
\end{equation}
and 
\begin{equation}
\label{Equation: Right Gabor inner product}
\langle f,g \brangle = \int_{\La^{\circ}} \langle g, \pi (\la^{\circ})^* f \rangle \pi (\la^{\circ})^* d\la^{\circ},
\end{equation}
and the actions
\begin{equation}
\label{Equation: Left Gabor action}
\mathbf{a} \cdot f = \int_{\La} a(\la )\pi (\la )f d\la
\end{equation}
and
\begin{equation}
\label{Equation: Right Gabor action}
f\cdot \mathbf{b} = \int_{\La^{\circ}} b(\la^{\circ}) \pi(\la^{\circ})^*f d\la^{\circ},
\end{equation}
with $\mathbf{a}\in \A$, $\mathbf{b}\in \B$, and $f,g\in S_0 (G)$. That these are well-defined was noted in Section 3 of \cite{jalu18duality}.
In the remainder of the section we denote by $A$ the $C^*$-completion of $\A$, $B$ the $C^*$-completion of $\B$, $\E = S_0 (G)$, and by $E$ the Hilbert $C^*$-module completion of $\E$. Hilbert $C^*$-modules $E$ as in this setting are called \textit{Heisenberg modules}. 
\begin{rmk}
	The fact that we get the same twisted group $C^*$-algebras by using $S_0 (\La ,c)$ as we get when using the more traditional approach with $L^1 (\La,c)$ was noted in \cite{AuEn19}.
\end{rmk}

Since $S_0$-functions are continuous, there are also well-defined canonical faithful traces on $\A$ and $\B$ given by
\begin{equation}
\begin{split}
\tr_{\A}: \A &\to \C \\
\mathbf{a} &\mapsto a(0),
\end{split}
\end{equation}
and
\begin{equation}
\begin{split}
\tr_{\B}: \B &\to \C \\
\mathbf{b} &\mapsto b(0).
\end{split}
\end{equation}
In general, these traces do not extend to $A$ and $B$, but we will nonetheless denote them by $\tr_A$ and $\tr_B$. These are indeed related as in \eqref{Equation: Trace relation}, which can be seen from
\begin{equation}
\label{Equation: Localization inner product}
\tr_A (\blangle f,g\rangle ) = \blangle f,g\rangle (0) = \langle f,g \rangle_{L^2 (G)} = \langle g,f \brangle (0) = \tr_B (\langle g,f \brangle ).
\end{equation}

In our discussion the following two results are crucial. The first follows immediately by \cite{milnes71}, and the second is a consequence of \cite{grle04}.
\begin{lemma}
	\label{Lemma: Group C*-alg unital iff discrete}
	$C^* (\La, c)$ is unital if and only if $\La$ is discrete.
\end{lemma}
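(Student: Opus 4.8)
My plan is to prove the two implications separately; the forward direction is elementary, and the converse (the one recorded as following from \cite{milnes71}) is the substantial one.

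First I would dispose of the implication that $\La$ discrete gives $C^*(\La,c)$ unital. When $\La$ is discrete, $S_0(\La)=\ell^1(\La)$ and the point mass $\delta_0$ at the identity $0\in\La$ lies in $S_0(\La,c)$. The cocycle satisfies $c(0,\la)=c(\la,0)=1$ for all $\la$, which is immediate from $c(\xi_1,\xi_2)=\overline{\omega_2(x_1)}$ and the fact that the identity of $G\times\widehat{G}$ is $(0,1)$. Hence twisted convolution gives $\delta_0\natural F=F\natural\delta_0=F$ for every $F\in S_0(\La,c)$, so $\delta_0$ is a unit for $S_0(\La,c)$; under the integrated representation it is sent to $\pi(0)=\Id_{L^2(G)}$. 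Since $S_0(\La,c)$ is a dense $*$-subalgebra of $C^*(\La,c)$, its unit is a unit for the enveloping $C^*$-algebra, which is therefore unital.

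For the converse, I would argue the contrapositive: if $\La$ is nondiscrete then $C^*(\La,c)$ has no unit. Since $\La$ is abelian, hence amenable, the universal and reduced twisted group $C^*$-algebras coincide, so it suffices to rule out a unit for the image of $C^*(\La,c)$ under the faithful twisted left regular representation $\lambda$ on $L^2(\La)$. Suppose toward a contradiction that $u$ is a unit. Then $p:=\lambda(u)$ is a projection with $p\,\lambda(a)=\lambda(ua)=\lambda(a)$ for every $a\in S_0(\La)$; as $S_0(\La)$ carries a bounded approximate identity for twisted convolution, the vectors $\lambda(a)\xi$ are dense in $L^2(\La)$, so $p$ has dense range and $p=\Id_{L^2(\La)}$. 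Thus $\Id_{L^2(\La)}$ would be a norm limit of operators $\lambda(a)$ with $a\in L^1(\La)$. To contradict this I would test against normalized indicators of shrinking neighborhoods of $0$: for a relatively compact neighborhood $V$ of $0$ put $\xi_V=|V|^{-1/2}\mathbf{1}_V$, a unit vector, with associated continuous positive-definite function $\phi_{\xi_V}(\la)=\langle\lambda(\la)\xi_V,\xi_V\rangle$, which is supported in $V-V$ and satisfies $|\phi_{\xi_V}|\le 1$ and $\int_\La|\phi_{\xi_V}|\,d\la\le|V|$. Then for fixed $a\in L^1(\La)$,
\begin{equation*}
\langle\lambda(a)\xi_V,\xi_V\rangle=\int_\La a(\la)\,\phi_{\xi_V}(\la)\,d\la,\qquad \Big|\int_\La a(\la)\,\phi_{\xi_V}(\la)\,d\la\Big|\le\int_{V-V}|a(\la)|\,d\la\xrightarrow[V\to\{0\}]{}0,
\end{equation*}
whereas $\langle\Id\,\xi_V,\xi_V\rangle=1$. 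Hence $\|\lambda(a)-\Id\|\ge\sup_V|\langle(\lambda(a)-\Id)\xi_V,\xi_V\rangle|\ge 1$ for every $a\in L^1(\La)$, so $\Id$ lies at distance at least $1$ from the norm-dense subalgebra $\lambda(S_0(\La))$ and is not in $\lambda(C^*(\La,c))$, contradicting $p=\Id$.

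The main obstacle is exactly this converse. The heuristic is clear: a unit would have to be $\pi(0)=\Id$, but for nondiscrete $\La$ the identity is a ``delta at $0$'' that is not even in the $C^*$-closure of the integrable functions; the work is in making ``not in the closure'' rigorous. The cleanest route I know is the regular-representation estimate above, and it is essential to work on $L^2(\La)$ rather than on $L^2(G)$: on $L^2(G)$ the relevant matrix coefficients are ambiguity functions $V_gg$, which by the uncertainty principle cannot be concentrated, so the shrinking-neighborhood trick would fail there.
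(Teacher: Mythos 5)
Your proof is correct, and it is genuinely more than what the paper provides: the paper does not prove this lemma at all, but simply records it as an immediate consequence of the cited reference \cite{milnes71} on identities in (twisted) group algebras. Your forward direction ($\La$ discrete $\Rightarrow$ unit $\delta_0$, which passes from the dense $*$-subalgebra to the enveloping $C^*$-algebra) is the standard argument and is fine. Your converse is a clean self-contained functional-analytic substitute for the citation: amenability of $\La$ lets you pass to the twisted regular representation on $L^2(\La)$, nondegeneracy forces a putative unit to be sent to $\Id_{L^2(\La)}$, and the shrinking-neighborhood vectors $\xi_V$ show $\|\lambda(a)-\Id\|\geq 1$ for every $a\in L^1(\La)$, since $\langle\lambda(a)\xi_V,\xi_V\rangle$ is controlled by $\int_{V-V}|a|$, which tends to $0$ by absolute continuity of the integral because $\{0\}$ is Haar-null in a nondiscrete group. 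The only point I would tighten is your justification of density of $\{\lambda(a)\xi\}$: what you actually need is nondegeneracy of the regular representation, which follows from $L^1(\La,c)$ having a bounded approximate identity (normalized indicators of shrinking neighborhoods) together with $L^1$-density of $S_0(\La)$; whether $S_0(\La,c)$ itself carries a bounded approximate identity is a separate and more delicate claim that you do not need. Your closing remark about why the test vectors must live on $L^2(\La)$ rather than $L^2(G)$ is apt. What the paper's approach buys is brevity; what yours buys is a proof readable without consulting Milnes.
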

\begin{prop}
	\label{Proposition: Spectral invariance when discrete}
	For a discrete subgroup $\La$ in $G\times\widehat{G}$ the involutive Banach algebra $S_0 (\La,c)$ is spectral invariant in $C^* (\La, c)$. 
\end{prop}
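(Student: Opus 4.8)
The plan is to recognize this as a Wiener-type lemma for twisted convolution and to reduce spectral invariance to the symmetry of the underlying Banach $*$-algebra. First I would record that because $\La$ is discrete, the Feichtinger algebra collapses to the twisted $\ell^1$-algebra: $S_0(\La, c) = \ell^1(\La, c)$ with equivalent norms, exactly as in the fact $S_0(G) = \ell^1(G)$ for discrete $G$ quoted above. Since $\La$ is abelian, hence amenable, the minimal and maximal $C^*$-completions coincide, so $C^*(\La, c)$ is the enveloping $C^*$-algebra of $\ell^1(\La, c)$, and the integrated representation embeds $\ell^1(\La, c)$ densely and faithfully into $C^*(\La, c)$ as a $*$-subalgebra with the same unit. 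The twisted involution $F^*(\la) = c(\la,\la)\overline{F(-\la)}$ is isometric in the $\ell^1$-norm, so we are in the standard setting for inverse-closedness criteria.

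Next I would reduce to self-adjoint elements. Suppose $a \in \ell^1(\La, c)$ is invertible in $C^*(\La, c)$. Then $b := a^* \natural a$ is self-adjoint and invertible in $C^*(\La, c)$, and if we can show $b^{-1} \in \ell^1(\La, c)$ then $a^{-1} = b^{-1} \natural a^*$ lies in $\ell^1(\La, c)$ as well, since $b^{-1} \natural a^*$ is a left inverse of $a$ and therefore equals $a^{-1}$. Hence it suffices to prove that $\sigma_{\ell^1(\La,c)}(b) = \sigma_{C^*(\La,c)}(b)$ for self-adjoint $b$; the inclusion $\sigma_{C^*(\La,c)} \subseteq \sigma_{\ell^1(\La,c)}$ is automatic, so only the reverse inclusion is at stake. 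The classical route to this is symmetry together with Hulanicki's lemma: if $\ell^1(\La, c)$ is a \emph{symmetric} Banach $*$-algebra, meaning $\sigma(x^* \natural x) \subseteq [0,\infty)$ for every $x$, then Hulanicki's lemma gives $\rho_{\ell^1(\La,c)}(b) = \|b\|_{C^*(\La,c)}$ for self-adjoint $b$, and since the spectrum of such $b$ is real in both algebras, the spectra coincide, yielding the desired inverse-closedness.

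The substantive step, and the one I expect to be the main obstacle, is therefore establishing that $\ell^1(\La, c)$ is symmetric; this is precisely the content of the twisted Wiener lemma of Gr\"ochenig and Leinert \cite{grle04}. The difficulty is that the cocycle $c$ makes the algebra genuinely noncommutative, so symmetry cannot be read off from commutativity and instead requires a spectral-radius estimate controlling the growth of $\|b^{2^n}\|_{\ell^1}^{1/2^n}$ for self-adjoint $b$. For a lattice in $\R^{2d}$ this is carried out in \cite{grle04}; for a general discrete subgroup $\La$ of the second countable phase space $G \times \widehat{G}$ one additionally invokes the structure theory of LCA groups to reduce to that model case, or appeals to the corresponding generalization, after which symmetry, and hence spectral invariance, follows. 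Once symmetry is in hand, the remaining assembly is the purely formal argument of the previous paragraph.
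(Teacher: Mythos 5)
The paper offers no proof of this proposition beyond the remark that it ``is a consequence of \cite{grle04}'', and your outline is precisely the standard derivation behind that citation: identify $S_0(\La,c)$ with $\ell^1(\La,c)$ for discrete $\La$, reduce inverse-closedness to the self-adjoint case and then to symmetry via Hulanicki's lemma, and outsource the symmetry of the twisted $\ell^1$-algebra to Gr\"ochenig--Leinert --- so this is the same approach, correctly unpacked. The only point to watch is that equality of spectral radii together with realness of both spectra does not by itself force equality of spectra; you need the full strength of Hulanicki's lemma (applied to polynomials in $b$) or the standard observation that a spectrum contained in $\R$ coincides with its boundary and hence is contained in the spectrum relative to the larger algebra, but either repair is routine.
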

\begin{rmk}
	Although the traces $\tr_A$ and $\tr_B$ do not in general extend to the algebras $A$ and $B$, we can guarantee they extend in one case. Namely, $\tr_A$ extends to all of $A$ if $A$ is unital, which is equivalent to $\La$ being discrete. The same is of course true for $B$ and $\tr_B$, with the discreteness condition on $\Lao$. This is due to the fact that the trace given by evaluation in the identity extends to twisted group $C^*$-algebras when the underlying group is discrete \cite[p. 951]{BeOm18}.
	
	The case of $\La$ or $\Lao$ being discrete is the case we will almost exclusively restrict to after \cref{Proposition: When is S_0 (G) fin gen proj module}.
\end{rmk}
The following is now an immediate consequence.
\begin{prop}
	Let $\La$ be cocompact, which implies $\Lao$ is discrete. Then under the above conditions on $A,B,E, \tr_B$ the quadruple $(A,B,E, \tr_B)$ is a left Gabor bimodule. In addition, the septuple $(A,B,E,\tr_B,\A,\B,\E)$ is a left Gabor bimodule with regularity.

	If $\Lao$ is cocompact and thus $\La$ is discrete, then we obtain a right Gabor bimodule with regularity analogously.
\end{prop}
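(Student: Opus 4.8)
The plan is to read off both assertions directly from the results assembled above, since the proposition is essentially a bookkeeping exercise matching the present Gabor setup against \cref{Definition: Gabor bimodule} and the definition of a left Gabor bimodule with regularity. I would treat the two cases in turn, obtaining the second from the first by interchanging the roles of $A$ and $\La$ with those of $B$ and $\Lao$.

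For the first case, assume $\La$ is cocompact, so that $\Lao$ is discrete. To see that $(A,B,E,\tr_B)$ is a left Gabor bimodule I would verify the three requirements of \cref{Definition: Gabor bimodule}. Unitality of $B = C^*(\Lao, \overline{c})$ follows from \cref{Lemma: Group C*-alg unital iff discrete} applied to the discrete group $\Lao$. That $\tr_B$ is a faithful finite trace on all of $B$ is exactly the content of the remark preceding the proposition: the trace given by evaluation at the identity extends from $\B$ to a faithful finite trace on $B$ precisely because $\Lao$ is discrete. Finally, $E$ is the Hilbert $C^*$-module completion of $\E = S_0(G)$, which is a pre-equivalence bimodule between $\A$ and $\B$ by \cite{ri88}, so $E$ is an $A$-$B$-equivalence bimodule. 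These three facts give the quadruple.

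For the septuple I would check the four defining clauses of a left Gabor bimodule with regularity. Clause (1) is the quadruple just established. For clause (2), $\A = S_0(\La, c)$ and $\B = S_0(\Lao, \overline{c})$ are Banach $*$-subalgebras that are dense in their universal enveloping $C^*$-algebras $A$ and $B$ by construction. Clause (3), that $\E = S_0(G)$ is an $\A$-$\B$-pre-equivalence bimodule, is part of the setup recalled above (see \cite{jalu18duality}). Clause (4), spectral invariance of $\B$ in $B$ with the same unit, is furnished by \cref{Proposition: Spectral invariance when discrete} for the discrete group $\Lao$.

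The second case is entirely symmetric: if $\Lao$ is cocompact then $\La$ is discrete, $A = C^*(\La, c)$ is unital by \cref{Lemma: Group C*-alg unital iff discrete}, and $\tr_A$ extends to a faithful finite trace on $A$ by the same remark, so $(A,B,E,\tr_A)$ is a right Gabor bimodule; spectral invariance of $\A$ in $A$ comes from \cref{Proposition: Spectral invariance when discrete} applied to $\La$, which yields the right Gabor bimodule with regularity. There is no genuine analytic obstacle here; the only point demanding care is that evaluation at the identity extends to a trace on the full $C^*$-algebra only in the discrete case, which is why cocompactness of $\La$ (equivalently discreteness of $\Lao$) is precisely the hypothesis that makes $\tr_B$ a finite trace on $B$ rather than merely on $\B$.
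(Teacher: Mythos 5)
Your proposal is correct and matches the paper's intent: the paper presents this proposition as an immediate consequence of the preceding material (unitality of $C^*(\Lao,\overline{c})$ for discrete $\Lao$, the extension of the evaluation trace in the discrete case, the pre-equivalence bimodule structure of $S_0(G)$, and spectral invariance of $S_0(\Lao,\overline{c})$ in $C^*(\Lao,\overline{c})$), and your write-up simply makes that bookkeeping explicit using exactly those ingredients. No gaps.
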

We may then reprove Theorem 3.9 of \cite{jalu18duality} in this framework. 
\begin{prop}
	\label{Proposition: When is S_0 (G) fin gen proj module}
	Let $\La \subset G \times \widehat{G}$ be a closed subgroup. 
	Then $E$ is a finitely generated projective $A$-module if and only if $\La \subset G \times \widehat{G}$ is a cocompact subgroup. Also, $\E$ is a finitely generated projective $\A$-module if and only if $\La \subset G \times \widehat{G}$ is cocompact.
\end{prop}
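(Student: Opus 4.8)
The plan is to reduce both equivalences to \cref{Proposition: Fin gen proj iff unital}, which characterises the finitely generated projective property of an equivalence bimodule through unitality of the partner algebra, and then to translate unitality into discreteness, hence cocompactness, of the relevant subgroup.

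For the first assertion I would argue directly. Here $E$ is the $A$-$B$-equivalence bimodule with $A = C^*(\La,c)$ and $B = C^*(\Lao,\overline{c})$, so \cref{Proposition: Fin gen proj iff unital} says that $E$ is finitely generated projective over $A$ if and only if $B$ is unital. Since $B$ is the twisted group $C^*$-algebra of the closed subgroup $\Lao$, \cref{Lemma: Group C*-alg unital iff discrete}, applied to $\Lao$ with cocycle $\overline{c}$, gives that $B$ is unital if and only if $\Lao$ is discrete. As $\Lao$ is discrete exactly when $\La$ is cocompact, chaining these equivalences yields the first claim.

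For the forward implication of the second assertion, suppose $\La$ is cocompact, so that $\Lao$ is discrete. Then $B$ is unital, and \cref{Proposition: Spectral invariance when discrete}, applied to $\Lao$, shows that $\B = S_0(\Lao,\overline{c})$ is spectral invariant in $B = C^*(\Lao,\overline{c})$; for discrete $\Lao$ the identity of $\B$ is the point mass at the neutral element, which is also the unit of $B$, so they share a unit. Because $\E = S_0(G)$ is an $\A$-$\B$-pre-equivalence bimodule and, by the first assertion, $E$ is finitely generated projective over $A$, all hypotheses of \cref{Proposition: Fin gen proj passes to subalg when spectral invariant} are in force, and I conclude that $\E$ is finitely generated projective over $\A$. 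For the converse, suppose $\E$ is finitely generated projective over $\A$. I would use that this furnishes a finite standard module frame inside $\E$, that is, elements $f_1,\dots,f_n,g_1,\dots,g_n \in \E$ with $h = \sum_{i=1}^n \blangle h,f_i\rangle g_i$ for every $h \in \E$. Each operator $h \mapsto \blangle h,f_i\rangle g_i$ is a compact module operator on $E$, so their finite sum is adjointable on $E$ and restricts to the identity on the dense submodule $\E$; by continuity it equals $\Id_E$. This is precisely the reconstruction identity producing the maps $r$ and $s$ in the proof of \cref{Proposition: Fin gen proj iff unital}, whence $E$ is finitely generated projective over $A$, and the first assertion forces $\La$ to be cocompact.

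The main obstacle I anticipate lies in this last converse: the descent of the finitely generated projective property from $A$ to $\A$ is packaged in \cref{Proposition: Fin gen proj passes to subalg when spectral invariant}, but the ascent from $\E$ over $\A$ to $E$ over $A$ requires identifying finite generation and projectivity with the existence of a finite standard module frame and then checking that the reconstruction identity extends from $\E$ to its completion. The care needed is to confirm that the frame elements remain in $\E$ and that the associated module operators are genuinely adjointable on $E$, so that the extension by density is valid; once this is granted, everything collapses to the single equivalence of the first assertion.
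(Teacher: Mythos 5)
Your treatment of the first assertion and of the forward half of the second assertion coincides with the paper's: both reduce to \cref{Proposition: Fin gen proj iff unital} plus \cref{Lemma: Group C*-alg unital iff discrete}, and the descent to $\E$ uses \cref{Proposition: Spectral invariance when discrete} together with \cref{Proposition: Fin gen proj passes to subalg when spectral invariant} exactly as in the text. The divergence, and the problem, is in the converse of the second assertion. You assert that finite generation and projectivity of $\E$ over $\A$ ``furnishes a finite standard module frame inside $\E$,'' i.e.\ elements $f_1,\dots,f_n,g_1,\dots,g_n \in \E$ with $h = \sum_i \blangle h,f_i\rangle g_i$ for all $h \in \E$. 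That equivalence (finitely generated projective $\Leftrightarrow$ existence of a finite standard module frame) is a theorem of Frank--Larson/Rieffel for Hilbert $C^*$-modules over \emph{unital $C^*$-algebras}; it rests on the self-duality of such modules, which lets one represent the dual-basis functionals $\phi_i \in \Hom_{\A}(\E,\A)$ by the $\A$-valued inner product. Here $\A = S_0(\La,c)$ is only a Banach $*$-algebra, possibly non-unital (at this point of the argument you do not yet know $\La$ is discrete), and $\E$ is an incomplete pre-equivalence bimodule, so nothing cited in the paper delivers this representation. As written, the key step is unjustified.

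The paper sidesteps module frames entirely: from $\E$ finitely generated projective over $\A$ it writes $\E \cong \A^n p$ for some idempotent $p \in M_n(\A)$, and passing to completions gives $E \cong A^n p$, so $E$ is a finitely generated projective $A$-module and the first assertion applies. Your argument from the frame onward is fine --- each $\Theta_{f_i,g_i}$ is adjointable on $E$ and the reconstruction identity extends by density --- so the fix is either to adopt the paper's direct-summand argument or to supply a proof that the dual basis of $\E$ over $\A$ can be realised through the inner product, which is precisely the content you are currently assuming.
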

\begin{proof} 
	$E$ is finitely generated and projective over $A$ if and only if $\K_{A} (E) = \End_{A} (E)$. As $E$ is an $A$-$B$-equivalence bimodule, this is equivalent to $B$ being unital by \cref{Proposition: Fin gen proj iff unital}. $B$ is unital if and only if $\La^{\circ}$ is discrete by \cref{Lemma: Group C*-alg unital iff discrete}, so equivalently
	\begin{equation}
	\widehat{\La^{\circ}} \cong (G \times \widehat{G})/\La
	\end{equation}
	is compact, that is, $\La$ is cocompact in $G \times \widehat{G}$. 
	
	Now, if $\La$ is cocompact, then $\B$ is unital, so we are in the situation of \cref{Proposition: Fin gen proj passes to subalg when spectral invariant} by \cref{Proposition: Spectral invariance when discrete}. Hence by the first part of this proposition it follows that $\E$ is a finitely generated projective $\A$-module.
	
	Conversely, suppose $\E$ is a finitely generated projective $\A$-module. Then $\E \cong \A^n p$ isometrically for some $n \in \N$ and some $p \in M_n (\A)$. Passing to the completions we obtain $E \cong A^n p$, so $E$ is a finitely generated projective $A$-module. By the first part of this proposition it follows that $\La$ is cocompact. 
\end{proof}
\begin{rmk}
	\cref{Proposition: When is S_0 (G) fin gen proj module} shows that we can only have finite module frames for $E$ as an $A$-module if $\La$ is cocompact in $\tfp{G}$. Since we wish to study the relationship between finite module frames and Gabor frames this is the case we care most about in the sequel. 
\end{rmk}
To get results on Gabor frames for $L^2 (G)$ with windows in $E$ from the above setup, we will need to localize certain subsets of the $C^*$-algebras $A$ and $B$, as well as the Morita equivalence bimodule $E$, just as explained in \cref{Section: Preliminaries}. For simplicity, let $\La$ be cocompact in $\tfp{G}$ from now on, unless otherwise specified. Then $\Lao$ is discrete and $\tr_B$ is defined on all of $B$. The localization of $B$ in $\tr_B$ is induced by the inner product $(-,-)_B$ given by
\begin{equation*}
(b_1,b_2)_B := \tr_B (b_1^* b_2).
\end{equation*}
Since $\B$ is dense in $B$ and $\tr_B$ is continuous, it follows that their localizations in $\tr_B$ are the same. For $b_1,b_2 \in \B$ we then have 
\begin{equation*}
\begin{split}
(b_1,b_2)_B &= \tr_B (b_1^* b_2) \\
&= \tr_B(\sum_{\lao \in \Lao} \overline{b_1 (\lao)}\pi (\lao)^* \sum_{\xi\in \Lao} b_2 (\xi) \pi (\xi))\\
&= \tr_B (\sum_{\lao \in \Lao}\sum_{\xi \in \Lao} \overline{b_1(\lao)}b_2 (\xi) c(\lao,\lao) \pi (-\lao) \pi (\xi)   )\\
&= \tr_B ( \sum_{\lao \in \Lao}\sum_{\xi\in \Lao} \overline{b_1 (\lao)} b_2 (\xi) c(\lao,\lao) c(-\lao,\xi ) \pi(-\lao + \xi)) \\
&= \tr_B ( \sum_{\lao \in \Lao}\sum_{\xi\in \Lao} \overline{b_1 (\lao + \xi)} b_2 (\xi) c (\lao + \xi, \lao + \xi) c(-\lao - \xi, \xi) \pi(-\lao)   ) \\
&= \sum_{\xi \in \Lao} \overline{b_1 (\xi)} b_2 (\xi) c(\xi,\xi) c(-\xi,\xi) \\
&= \sum_{\xi \in \Lao} \overline{b_1 (\xi)} b_2 (\xi) \\
&= \langle b_1 , b_2 \rangle_{\ell^2 (\Lao)}.
\end{split}
\end{equation*}
As $\B = S_0 (\Lao,\overline{c}) = \ell^1 (\Lao,\overline{c})$ is dense in $\ell^2 (\Lao)$, we may identify the localization $H_B$ of $B$ with $\ell^2 (\Lao)$. By \cite[Proposition 3.2]{AuEn19} we also obtain that the localization of $E$ in $\tr_B$ is $L^2 (G)$. Note that this is the same as the localization of $E$ in $\tr_A$ by construction, and that there is an action of $A$ on $L^2 (G)$ by extending the action of $A$ on $E$.

It is slightly more tricky to localize subsets of $A$. Indeed, it is not in general possible as the trace might not be defined everywhere. However, even if $A$ is not unital we may localize the algebraic ideal $\blangle E,E \rangle \subset A$ in the trace $\tr_A$. Indeed, by \cite[Theorem 3.5]{AuEn19}, elements of $E$ are such that whenever $g \in E$ and $f \in L^2 (G)$, then $\{ \langle f, \pi (\la)g\rangle \}_{\la \in \La} \in L^2 (\La)$. This is the property of being a Bessel vector, which we will discuss in more detail below. Hence for any $f,g \in E$, we may identify $\blangle f,g \rangle \in A$ with $(\langle f, \pi (\la) g\rangle)_{\la \in \La}$ in $L^2 (\La)$ by doing the analogous procedure with $\tr_A$ as for $\tr_B$ above. 

We may do the same for the matrix algebras and matrix modules considered in \cref{Subsection: Extending to several generators}. Note that $\bbracket M_{n,d} (E), M_{n,d}(E) ] = M_{n,d}(\blangle E,E\rangle)$. Adapting the setting of twisted group $C^*$-algebras and Heisenberg modules above to the matrix algebra setting of \cref{Subsection: Extending to several generators} we see that we obtain the following identifications
\begin{equation}
\begin{split}
&H_{M_d (B)} = \ell^2 (\Lao \times \Z_d \times \Z_d) \\
&H_{M_n (\blangle E,E \rangle)} = L^2 (\La \times \Z_n \times \Z_n) \\
&H_{M_{n,d} (E)} = L^2 (G \times \Z_n \times \Z_d).
\end{split}
\end{equation}
\begin{rmk}
	Should $\Lao$ be cocompact and therefore $\La$ discrete, we do the obvious changes. Also if both $\La$ and $\Lao$ are discrete, that is, they are both lattices, then we may localize all of $M_n (A)$ and all of $M_d (B)$.
\end{rmk}


We can finally treat the analogs of Gabor frames in our framework.
In what follows we will consider a novel type of Gabor frames. To ease notation we will for $f \in L^2 (G \times \Z_n \times \Z_d)$ write $f_{i,j}$ instead of $f(\cdot ,i,j)$, and the same for elements of $L^2 (\La \times \Z_n \times \Z_n)$ and $L^2 (\Lao \times \Z_d \times \Z_d)$.
\begin{defn}
	\label{Definition: nd-matrix frames}
	Let $\La$ be a closed subgroup of $G \times \widehat{G}$. For $g \in L^2 (G \times \Z_n \times \Z_d)$ we define the coefficient operator $C_g$ by
	\begin{equation}
	\label{Equation: Gabor coefficient operator}
	\begin{split}
	C_g : L^2 (G \times \Z_n \times \Z_d) &\to L^2 (\La \times \Z_n \times \Z_n) \\
	C_g (f) &= \{\sum_{m\in \Z_d}\langle f_{k,m} , \pi(\la)g_{l,m}\rangle \}_{\la \in \La, k,l\in \Z_n}
	\end{split}
	\end{equation}
	and the synthesis operator $D_g$ by
	\begin{equation}
	\label{Equation: Gabor synthesis operator}
	\begin{split}
	D_g : L^2 (\La \times \Z_n \times \Z_n)&\to L^2 (G \times \Z_n \times \Z_d) \\
	D_g a &= \{\sum_{m\in \Z_n} \int_{\La} a_{k,m}(\la)\pi(\la)g_{m,l} \dif \la   \}_{ k\in \Z_n, l\in\Z_d}.	
	\end{split}
	\end{equation}
	Furthermore, we define the \textit{frame-like operator} $S_{g,h} =D_h C_g $, and for brevity we write $S_g$ for $D_g C_g$. We say $S_g$ is the \textit{frame operator associated to $g$}.
	
	We say $g$ generates an \textit{$(n,d)$-matrix Gabor frame for} $L^2 (G)$ with respect to $\La$ if $S_g : L^2 (G \times \Z_n \times \Z_d) \to L^2 (G \times \Z_n \times \Z_d)$ is an isomorphism. Equivalently, the collection of time-frequency shifts
	\begin{equation}
		\GS (g; \La) := \{\pi(\la) g_{i,j} \mid \la \in \La   \}_{i\in \Z_n, j\in \Z_d}
	\end{equation}
	is a frame for $L^2 (G \times \Z_n \times \Z_d)$. We then say that $\GS (g; \La)$ is an \textit{$(n,d)$-matrix Gabor frame for $L^2 (G)$}. 
	Equivalently, there exists $h \in L^2 (G \times \Z_n \times \Z_d)$ such that for all $f \in L^2 (G \times \Z_n \times \Z_d)$ we have
	\begin{equation}
	\label{Equation: (n,d)-matrix frames}
	f_{r,s} = \sum_{k\in \Z_d} \sum_{l \in \Z_n} \int_{\La} \langle f_{r,k} , \pi (\la)g_{l,k}\rangle \pi (\la)h_{l,s} d\la,
	\end{equation}
	for all $r \in \Z_n$ and $s \in \Z_d$. When $g$ and $h$ satisfy \eqref{Equation: (n,d)-matrix frames} we say $\GS(g;\La)$ and $\GS(h; \La)$ are a \textit{dual pair of $(n,d)$-matrix Gabor frames}. If $\La$ is implicit, we may also say $h$ is a \textit{dual $(n,d)$-matrix Gabor atom for $g$}, or just a \textit{dual atom of $g$}.
\end{defn}
\begin{rmk}
	The equivalence of the definitions of $(n,d)$-matrix Gabor frames given in \cref{Definition: nd-matrix frames} follows by \cite[Lemma 6.3.2]{Chr16} and \cref{prop: Vector in completion is Bessel} below.
\end{rmk}
\begin{rmk}
	When $\GS (g;\La)$ is an $(n,d)$-matrix Gabor frame for $L^2 (G)$, there is always a dual $(n,d)$-matrix Gabor atom for $g$, namely $h = S_g^{-1} g$. This is known as the \textit{canonical dual of $g$}.
\end{rmk}
\begin{rmk}
	One can verify that $C_g = D_g^*$. Thus $S_g$ is always a positive operator between Hilbert spaces, just as for the module frame operator in \cref{Section: Abstract Gabor Analysis}.
\end{rmk}
For general $g \in L^2 (G \times \Z_n \times \Z_d)$ the operator $C_g$ will not be bounded. Elements $g$ such that $C_g$ is bounded are of interest on their own.
\begin{defn} 
	If $g \in L^2 (G\times \Z_n \times \Z_d)$ is so that $C_g:  L^2 (G \times \Z_n \times \Z_d) \to L^2 (\La \times \Z_n \times \Z_n)$ is a bounded operator we say $g$ is an \textit{$(n,d)$-matrix Gabor Bessel vector for $L^2 (G)$ with respect to $\La$}, or that $\GS(g;\La)$ is an \textit{$(n,d)$-matrix Gabor Bessel system for $L^2 (G)$}. Equivalently, there is $D > 0$ such that for all $f \in L^2 (G \times \Z_n \times \Z_d)$ we have 
	\begin{equation}
	\label{Equation: Gabor Bessel vector inner product inequalities}
	\langle f,f \rangle \leq D \langle C_g f , C_g f \rangle,
	\end{equation}
	which may also be written as
	\begin{equation}
	\label{Equation: Gabor Bessel vector norm inequalities}
	\sum_{i \in \Z_n}\sum_{j\in \Z_d} \int_{G} \vert f_{i,j}(\xi)\vert^2 d\xi \leq D \sum_{k, l \in \Z_n} \int_{\La} \vert \sum_{m\in \Z_d}\langle f_{k,m} , \pi(\la)g_{l,m}\rangle \vert^2 d\la.
	\end{equation}
	The smallest $D>0$ such that the condition of \eqref{Equation: Gabor Bessel vector inner product inequalities} holds is called the \textit{optimal Bessel bound of $\GS(g;\La)$}, or just the \textit{optimal Bessel bound of $g$} if the set $\La$ is clear from the context.
\end{defn}

The Gabor frames of \cref{Definition: nd-matrix frames} seemingly generalize the $n$-multiwindow $d$-super Gabor frames of \cite{jalu18duality}. Indeed, we obtain $n$-multiwindow $d$-super Gabor frames if we only require reconstruction of $f \in L^2 (G \times \Z_d)$ and we identify $L^2 (G\times \Z_d)\subset L^2 (G \times \Z_n \times \Z_d)$ by embedding along a single element of $\Z_n$. Hence \eqref{Equation: (n,d)-matrix frames} generalizes both multiwindow Gabor frames and super Gabor frames as well, setting $d=1$ or $n=1$, respectively. However, we will in \cref{Proposition: nd frame is n-multiwindow d-super frame} show that any  $n$-multiwindow $d$-super Gabor frame for $L^2 (G)$ with respect to $\La$ is an $(n,d)$-matrix Gabor frame for $L^2 (G)$ with respect to $\La$. However, we continue to call them by separate names, since, as mentioned above, they are used for reconstruction in different Hilbert spaces. 

The following proposition was noted in the $(n,1)$-matrix case in \cite[Theorem 3.11]{AuEn19}, and its proof in the $(n,d)$-matrix Gabor case goes through the same except with more bookkeeping.
\begin{prop}
	\label{prop: Vector in completion is Bessel}
	Let $\La \subset \tfp{G}$ be closed and cocompact. For every $g \in M_{n,d}(E)$, $C_g :L^2 (G \times \Z_n \times \Z_d) \to L^2 (\La \times \Z_n \times \Z_n)$ is a bounded operator. In other words, every $g \in M_{n,d}(E)$ is a Bessel vector.
\end{prop}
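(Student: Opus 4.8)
The plan is to recognise the coefficient operator $C_g$ as the analysis operator $\modan_g$ of the matrix equivalence bimodule $M_{n,d}(E)$ followed by localization, and then to bound it by the module Cauchy--Schwarz inequality of \cref{Proposition: Middle positivity}. It suffices to establish the Bessel estimate on the dense submodule $M_{n,d}(E) \subset L^2(G \times \Z_n \times \Z_d)$ and then extend $C_g$ by continuity. For $f \in M_{n,d}(E)$ the $(k,l)$-component of $C_g f$, that is $\la \mapsto \sum_{m \in \Z_d}\langle f_{k,m}, \pi(\la) g_{l,m}\rangle$, is exactly the coefficient function of the entry $\bbracket f,g]_{k,l} = \sum_{m\in\Z_d}\blangle f_{k,m}, g_{l,m}\rangle$ of the $M_n(A)$-valued inner product, under the identification $\blangle u,v\rangle \leftrightarrow (\langle u, \pi(\la) v\rangle)_{\la\in\La}$ recalled above. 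Thus $C_g f$ is the image of $\modan_g f = \bbracket f,g]$ under the localization $M_n(\blangle E,E\rangle) \to H_{M_n(\blangle E,E\rangle)} = L^2(\La \times \Z_n \times \Z_n)$.

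Next I would pass everything to the induced trace. Summing the $L^2(\La)$-norms of the coefficient functions over $(k,l)$ and using $\bbracket f,g]^* = \bbracket g,f]$ together with the normalization \eqref{Equation: Relating matrix traces}, the Hilbert space norms become
\begin{equation*}
\Vert C_g f\Vert^2 = n\,\tr_{M_n(A)}(\bbracket f,g]\bbracket g,f]), \qquad \Vert f\Vert^2 = n\,\tr_{M_n(A)}(\bbracket f,f]),
\end{equation*}
with the same factor $n$ in both. Applying \cref{Proposition: Middle positivity} to the $M_n(A)$-valued inner product on $M_{n,d}(E)$ yields
\begin{equation*}
\bbracket f,g]\bbracket g,f] \leq \Vert\bbracket g,g]\Vert\, \bbracket f,f]
\end{equation*}
in $M_n(A)^+$. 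Since the difference of the two sides lies in the ideal $M_n(\blangle E,E\rangle) = \bbracket M_{n,d}(E), M_{n,d}(E)]$, on which $\tr_{M_n(A)}$ is finite and hence positive and monotone (by the matrix analogue of \cref{prop:bimodule_localization}), applying the trace gives
\begin{equation*}
\Vert C_g f\Vert^2 = n\,\tr_{M_n(A)}(\bbracket f,g]\bbracket g,f]) \leq \Vert\bbracket g,g]\Vert\, n\,\tr_{M_n(A)}(\bbracket f,f]) = \Vert\bbracket g,g]\Vert\,\Vert f\Vert^2 .
\end{equation*}
The factor $n$ cancels, so $C_g$ is bounded on $M_{n,d}(E)$ with $\Vert C_g\Vert^2 \leq \Vert\bbracket g,g]\Vert$ and extends uniquely to a bounded operator on all of $L^2(G\times\Z_n\times\Z_d)$.

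I expect the main obstacle to be purely organisational: carefully matching the localization identification $H_{M_n(\blangle E,E\rangle)} = L^2(\La\times\Z_n\times\Z_n)$ and the trace normalizations so that $\Vert C_g f\Vert^2$ really equals $n\,\tr_{M_n(A)}(\bbracket f,g]\bbracket g,f])$; this is the additional bookkeeping relative to the $(n,1)$-case of \cite[Theorem 3.11]{AuEn19}. The only genuine technical point is the possible non-unitality of $A$ when $\La$ is merely cocompact: the induced trace $\tr_{M_n(A)}$ need not be finite on all of $M_n(A)$, so one must remain inside the ideal $M_n(\blangle E,E\rangle)$, which holds here because every $\bbracket f,g]$ and the dominating element $\Vert\bbracket g,g]\Vert\,\bbracket f,f]$ lie in it.
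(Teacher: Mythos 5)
Your argument is correct, and it is worth noting that the paper itself offers no proof of this proposition: it only cites the $(n,1)$ case from \cite[Theorem 3.11]{AuEn19} and asserts that the general case ``goes through the same except with more bookkeeping.'' Your computation supplies exactly that bookkeeping, in a way consistent with the paper's framework: the identification of $C_g f$ with the localization of $\modan_g f=\bbracket f,g]$, the matched factor of $n$ forced by the normalization \eqref{Equation: Relating matrix traces}, the module Cauchy--Schwarz estimate $\bbracket f,g]\bbracket g,f]\leq\Vert\bbracket g,g]\Vert\,\bbracket f,f]$ from \cref{Proposition: Middle positivity}, and monotonicity of the densely defined trace on the ideal $M_n(\blangle E,E\rangle)$ -- the same ingredients the paper uses elsewhere (e.g.\ in \cref{Proposition: Dual atom in E iff in H_E} and the remark following \cref{Proposition: Modular Riesz sequences}, where the Bessel bound $\Vert\blangle g,g\rangle\Vert$ appears). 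The only point left implicit is that the continuous extension of $C_g$ from the dense submodule $M_{n,d}(E)$ coincides with the pointwise-defined operator on all of $L^2(G\times\Z_n\times\Z_d)$, which follows since each coordinate functional $f\mapsto\sum_{m}\langle f_{k,m},\pi(\la)g_{l,m}\rangle$ is itself $L^2$-continuous.
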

For ease of notation, the localization map in in $M_n (A)$ will be denoted by $\rho_{M_n (A)}$, even though we might not be able to localize all of $M_n (A)$. With the above definitions, the following calculation is justified for $f,g \in M_{n,d}(E) \subset L^2 (G \times \Z_n \times \Z_d)$ by \cref{prop: Vector in completion is Bessel}
\begin{equation*}
\begin{split}
\rho_{M_{n}(A)} \modan_g (f) &= \rho_{M_{n}(A)} (\bbracket f,g ]) \\
&= \rho_{M_{n}(A)} (\{ \sum_{m\in \Z_d} \int_{\La} \langle f_{k,m}, \pi (\la) g_{l,m}\rangle \pi (\la) \}_{k,l \in \Z_n}) \\
&= \{ \sum_{m\in \Z_d}  \langle f_{k,m}, \pi (\la) g_{l,m}\rangle \}_{\la \in \La ,k,l \in \Z_n} = C_g \rho_{M_{n,d}(E)}(f).
\end{split}
\end{equation*}
Hence we obtain the following result. 

\begin{lemma}
	\label{Lemma: Coefficient operator commutation}
	Let $\La \subset \tfp{G}$ be closed and cocompact. For every $g \in M_{n,d} (E)$, the module coefficient operator $\modan_g$ localizes to give the coefficient operator $C_g$. Equivalently, the diagram
	\begin{center}
		\begin{tikzpicture}[scale=1.0]
		\node (1) at (0,2) {$M_{n,d}(E)$};
		\node (2) at (6,2) {$M_n (A)$};
		\node (3) at (0,0) {$L^2 (G \times \Z_n \times \Z_d)$};
		\node (4) at (6,0) {$L^2 (\La \times \Z_n \times \Z_n)$};
		\path[->,font=\scriptsize]
		(1) edge[auto] node[auto] {$\modan_g$} (2)
		(1) edge node[auto,swap] {$\rho_{M_{n,d}(E)}$} (3)
		(2) edge node[auto] {$\rho_{M_n (A)}$} (4)
		(3) edge node[auto] {$C_g$} (4);
		\end{tikzpicture}
	\end{center}
	commutes for all $g \in M_{n,d}(E)$.
\end{lemma}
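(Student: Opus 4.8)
The plan is to reduce the statement to the explicit entrywise computation that immediately precedes the lemma, and then to upgrade it from the dense subspace to the full Hilbert space by a continuity argument. Concretely, I want to establish the operator identity $\rho_{M_n(A)} \circ \modan_g = C_g \circ \rho_{M_{n,d}(E)}$ first on the image of $M_{n,d}(E)$ and then extend it to all of $L^2(G\times\Z_n\times\Z_d)$.

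First I would fix $g \in M_{n,d}(E)$ and take an arbitrary $f \in M_{n,d}(E)$. Unwinding the definition of the $M_n(A)$-valued inner product, the $(k,l)$-entry of $\modan_g(f) = \bbracket f,g]$ is $\sum_{m\in\Z_d} \blangle f_{k,m}, g_{l,m}\rangle$, and substituting the Heisenberg-module inner product \eqref{Equation: Left Gabor inner product} this equals $\sum_{m\in\Z_d}\int_{\La} \langle f_{k,m}, \pi(\la) g_{l,m}\rangle \pi(\la)\,d\la$. This element lies in the algebraic ideal $M_n(\blangle E,E\rangle) = \bbracket M_{n,d}(E), M_{n,d}(E)]$, which is precisely the part of $M_n(A)$ on which the induced trace is finite and hence localizable even when $A$ itself is non-unital; its localization was identified earlier with $L^2(\La\times\Z_n\times\Z_n)$. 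Applying $\rho_{M_n(A)}$ entrywise then records exactly the coefficient function $\la \mapsto \sum_{m\in\Z_d}\langle f_{k,m}, \pi(\la) g_{l,m}\rangle$, which by \eqref{Equation: Gabor coefficient operator} is the $(k,l)$-entry of $C_g \rho_{M_{n,d}(E)}(f)$. This yields $\rho_{M_n(A)}\modan_g(f) = C_g \rho_{M_{n,d}(E)}(f)$ for all $f\in M_{n,d}(E)$, i.e. commutation of the square on the dense image $\rho_{M_{n,d}(E)}(M_{n,d}(E)) \subset L^2(G\times\Z_n\times\Z_d)$.

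It then remains to pass to all of $L^2(G\times\Z_n\times\Z_d)$, and here I would invoke \cref{prop: Vector in completion is Bessel}: since $\La$ is cocompact and $g\in M_{n,d}(E)$, the operator $C_g$ is bounded. Consequently $C_g \circ \rho_{M_{n,d}(E)}$ is the unique bounded operator agreeing with $\rho_{M_n(A)}\circ\modan_g$ on the dense subspace, and the diagram commutes as asserted.

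I expect the only genuinely delicate point to be the well-definedness of the left vertical map on the relevant elements: although $M_n(A)$ need not be localizable in full, every $\bbracket f,g]$ lands in the ideal $M_n(\blangle E,E\rangle)$, where the trace is finite and $\rho_{M_n(A)}$ realizes it isometrically inside $L^2(\La\times\Z_n\times\Z_n)$. Once this identification is granted, the computation is pure bookkeeping across the matrix entries, and the continuity extension via the Bessel bound closes the argument.
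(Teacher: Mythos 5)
Your proposal is correct and follows essentially the same route as the paper: the paper's ``proof'' is precisely the entrywise computation displayed immediately before the lemma, justified by \cref{prop: Vector in completion is Bessel} and the identification of the localization of the ideal $M_n(\blangle E,E\rangle)$ with a subspace of $L^2(\La\times\Z_n\times\Z_n)$. Your additional remark about extending by continuity to all of $L^2(G\times\Z_n\times\Z_d)$ is harmless but not needed for the diagram as stated, since its top-left corner is $M_{n,d}(E)$ rather than the full Hilbert space.
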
 
Likewise one may obtain $C_g^* \rho_{M_{n}(A)} = \rho_{M_{n,d} (E)}\modan_g^* : M_{n} (\blangle E,E \rangle) \to L^2 (G \times \Z_n \times \Z_d)$ for all $g \in M_{n,d} (E)$. Note that the domain might be larger, but we cannot guarantee this unless $A$ is unital, that is, when $\La$ is discrete.
\begin{lemma} 
	\label{Lemma: Synthesis operator commutation}
	Let $\La \subset \tfp{G}$ be closed and cocompact. For every $g\in M_{n,d}(E)$, the module synthesis operator $\modan_g^*$ localizes to the Gabor synthesis operator $C_g^*$. Equivalently, the diagram
	\begin{center}
		\begin{tikzpicture}[scale=1.0]
		\node (1) at (0,2) {$M_{n}(\blangle E,E \rangle)$};
		\node (2) at (6,2) {$M_{n,d} (E)$};
		\node (3) at (0,0) {$L^2 (\La \times \Z_n \times \Z_n)$};
		\node (4) at (6,0) {$L^2 (G \times \Z_n \times \Z_d)$};
		\path[->,font=\scriptsize]
		(1) edge[auto] node[auto] {$\modan_g^*$} (2)
		(1) edge node[auto,swap] {$\rho_{M_n (A)}$} (3)
		(2) edge node[auto] {$\rho_{M_{n,d}(E)}$} (4)
		(3) edge node[auto] {$C_g^*$} (4);
		\end{tikzpicture}
	\end{center}
	commutes for every $g \in M_{n,d}(E)$.
\end{lemma}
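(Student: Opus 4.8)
The plan is to obtain this lemma as the adjoint counterpart of \cref{Lemma: Coefficient operator commutation}, using that the module coefficient operator $\modan_g$ and the module synthesis operator $\modan_g^*$ (explicitly $a \mapsto ag$) are mutually adjoint Hilbert-module operators, and that localization converts the $M_n(A)$-valued inner products into scalar inner products by composing with $\tr_{M_n(A)}$. Because applying a trace preserves an adjoint relation, the localization of $\modan_g^*$ is forced to be the Hilbert-space adjoint of the localization of $\modan_g$, which by \cref{Lemma: Coefficient operator commutation} is $C_g$; hence $\modan_g^*$ must localize to $C_g^* = D_g$.

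Concretely, I would first recall that $\rho_{M_n(A)}\modan_g = C_g\,\rho_{M_{n,d}(E)}$ and that $\modan_g^*$ is the adjoint of $\modan_g$ with respect to the $M_n(A)$-valued inner products, where $M_n(A)$ carries its standard self-module structure $\langle a,b\rangle = ab^*$; this is the matrix analogue of the identity $\modan_g^* = \modsy_g$ recorded after \cref{Definition: Modular synthesis and analysis}. The localized inner products on $H_{M_{n,d}(E)} = L^2(G\times\Z_n\times\Z_d)$ and on $H_{M_n(A)} = L^2(\La\times\Z_n\times\Z_n)$ are $\tr_{M_n(A)}(\bbracket f_1,f_2])$ and $\tr_{M_n(A)}(a_1 a_2^*)$, respectively. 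Then, for $f \in M_{n,d}(E)$ and $a \in M_n(\blangle E,E\rangle)$, I would pair both candidate images against $\rho_{M_{n,d}(E)} f$:
\[
(C_g\,\rho_{M_{n,d}(E)} f,\ \rho_{M_n(A)} a) = \tr_{M_n(A)}(\bbracket f,g]\,a^*) = \tr_{M_n(A)}(\bbracket f, ag]) = (\rho_{M_{n,d}(E)} f,\ \rho_{M_{n,d}(E)}\modan_g^* a),
\]
where the middle equality uses $\bbracket f, ag] = \bbracket f,g]\,a^*$. Since $\rho_{M_{n,d}(E)}(M_{n,d}(E))$ is dense in $L^2(G\times\Z_n\times\Z_d)$ and $C_g^*$ is the unique bounded operator with $(C_g x,y) = (x,C_g^* y)$, this identity forces $\rho_{M_{n,d}(E)}\modan_g^* a = C_g^*\rho_{M_n(A)} a$ for every such $a$, which is precisely the claimed commutativity.

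The main difficulty is purely bookkeeping about domains: when $\La$ is cocompact but not a lattice the algebra $A$, hence $M_n(A)$, is non-unital, so $\rho_{M_n(A)}$ is only defined on the ideal $M_n(\blangle E,E\rangle)$, and I must confirm that $\modan_g^*$ sends this ideal into elements that localize inside $L^2(G\times\Z_n\times\Z_d)$, which holds because $ag \in M_{n,d}(E)$ and every vector of $M_{n,d}(E)$ is Bessel by \cref{prop: Vector in completion is Bessel}. The one substantive computation is the sesquilinearity identity $\bbracket f, ag] = \bbracket f,g]\,a^*$, immediate from $\bbracket \cdot,\cdot]$ being conjugate-symmetric and $M_n(A)$-linear in its first argument. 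As an alternative, one could dispense with the adjoint argument and verify the diagram directly, expanding $\rho_{M_{n,d}(E)}(ag)$ and $D_g\,\rho_{M_n(A)}(a)$ from the definitions of $C_g^* = D_g$ exactly as in the entry-by-entry computation preceding \cref{Lemma: Coefficient operator commutation}; this route is hinted at in the text, but it is more laborious and less transparent.
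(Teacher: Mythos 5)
Your argument is correct, but it is not the route the paper takes. The paper does not prove this lemma by an adjoint argument at all: it simply observes (in the sentence preceding the lemma) that one obtains $C_g^*\rho_{M_n(A)}=\rho_{M_{n,d}(E)}\modan_g^*$ ``likewise,'' meaning by the same direct, entry-by-entry computation that was carried out for $\modan_g$ just before \cref{Lemma: Coefficient operator commutation} --- i.e.\ one expands $\rho_{M_{n,d}(E)}(ag)$ using the integrated action \eqref{Equation: Left Gabor action} and matches it with the explicit formula \eqref{Equation: Gabor synthesis operator} for $D_g=C_g^*$. You instead derive the synthesis statement formally from the analysis statement: you use that $\modan_g^*$ is the module adjoint of $\modan_g$, that localization turns the $M_n(A)$-valued pairings into the scalar inner products via $\tr_{M_n(A)}$, and that $C_g$ is bounded (so $C_g^*$ exists) by \cref{prop: Vector in completion is Bessel}, then conclude by density of $\rho_{M_{n,d}(E)}(M_{n,d}(E))$ in $L^2(G\times\Z_n\times\Z_d)$. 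The key identity $\bbracket f,ag]=\bbracket f,g]\,a^*$ and the chain of trace equalities are all valid, and you correctly flag the only delicate point, namely that $\rho_{M_n(A)}$ is only defined on the ideal $M_n(\blangle E,E\rangle)$ when $A$ is non-unital. What your route buys is that one never has to re-verify the explicit integral formula for $D_g$; what the paper's route buys is precisely that explicit formula, which identifies the localized operator concretely as the Gabor synthesis operator rather than merely as ``the adjoint of $C_g$.'' Both are sound, and you correctly note the direct computation as the alternative the text actually intends.
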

Combining \cref{Lemma: Coefficient operator commutation} and \cref{Lemma: Synthesis operator commutation} we then obtain
\begin{prop}
	\label{Proposition: Module frame localizes to Gabor frame}
	Let $\La \subset \tfp{G}$ be closed and cocompact. For all $g,h \in M_{n,d}(E)$, $S_{g,h} \rho_{M_{n,d}(E)} = \rho_{M_{n,d}(E)} \modft_{g,h}$, meaning the module frame-like operator $\modft_{g,h}$ localizes to the frame-like operator $S_{g,h}$. Equivalently, the diagram
	\begin{center}
		\begin{tikzpicture}[scale=1.0]
		\node (1) at (0,2) {$M_{n,d}(E)$};
		\node (2) at (6,2) {$M_{n,d} (E)$};
		\node (3) at (0,0) {$L^2 (G \times \Z_n \times \Z_d)$};
		\node (4) at (6,0) {$L^2 (G \times \Z_n \times \Z_d)$};
		\path[->,font=\scriptsize]
		(1) edge[auto] node[auto] {$\modft_{g,h}$} (2)
		(1) edge node[auto,swap] {$\rho_{M_{n,d}(E)}$} (3)
		(2) edge node[auto] {$\rho_{M_{n,d}(E)}$} (4)
		(3) edge node[auto] {$S_{g,h}$} (4);
		\end{tikzpicture}
	\end{center}
	commutes for all $g \in M_{n,d} (E)$. 
\end{prop}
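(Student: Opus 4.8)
The plan is to prove the identity by simply composing the two commutation lemmas already established, since the frame-like operators on both sides factor through the coefficient and synthesis operators. Recall that in the matrix setting $\modft_{g,h} = \modan_h^* \modan_g$ and, on the Hilbert space side, $S_{g,h} = D_h C_g$ with $D_h = C_h^*$. So the whole statement should reduce to vertically stacking the diagram of \cref{Lemma: Coefficient operator commutation} on top of (a mirrored copy of) the diagram of \cref{Lemma: Synthesis operator commutation}, with the middle column being $M_n(\blangle E,E\rangle)$ localizing into $L^2(\La \times \Z_n \times \Z_n)$ via $\rho_{M_n(A)}$.

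Concretely, first I would write, for $f \in M_{n,d}(E)$,
\begin{equation*}
S_{g,h}\,\rho_{M_{n,d}(E)}(f) = C_h^* C_g\, \rho_{M_{n,d}(E)}(f).
\end{equation*}
Next I would apply \cref{Lemma: Coefficient operator commutation} to the inner factor, giving $C_g\,\rho_{M_{n,d}(E)}(f) = \rho_{M_n(A)}\,\modan_g(f) = \rho_{M_n(A)}(\bbracket f,g])$. Then I would apply \cref{Lemma: Synthesis operator commutation} (with $h$ in place of $g$), which says $C_h^*\,\rho_{M_n(A)} = \rho_{M_{n,d}(E)}\,\modan_h^*$ on $M_n(\blangle E,E\rangle)$, to obtain
\begin{equation*}
C_h^*\,\rho_{M_n(A)}(\bbracket f,g]) = \rho_{M_{n,d}(E)}\,\modan_h^*(\bbracket f,g]) = \rho_{M_{n,d}(E)}\,\modft_{g,h}(f),
\end{equation*}
using $\modft_{g,h} = \modan_h^*\modan_g$. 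Chaining these equalities over all $f$ in the dense submodule $M_{n,d}(E)$ yields $S_{g,h}\,\rho_{M_{n,d}(E)} = \rho_{M_{n,d}(E)}\,\modft_{g,h}$, which is exactly the asserted commutativity of the square.

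The only point requiring care — and the main (mild) obstacle — is verifying that all intermediate compositions are defined, i.e.\ that the output of $\modan_g$ lands in the domain on which the synthesis lemma is stated. This is where the distinction between $M_n(A)$ and the algebraic ideal $M_n(\blangle E,E\rangle)$ matters: since $\tr_A$, hence $\rho_{M_n(A)}$, need not be defined on all of $M_n(A)$ when $A$ is non-unital (i.e.\ when $\La$ is not discrete), I must note that $\modan_g(f) = \bbracket f,g]$ is a finite sum of elementary inner products and therefore lies in $M_n(\blangle E,E\rangle) = \bbracket M_{n,d}(E),M_{n,d}(E)]$, precisely the set on which \cref{Lemma: Synthesis operator commutation} and $\rho_{M_n(A)}$ operate. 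Boundedness and the extension to the localizations $H_{M_{n,d}(E)} = L^2(G \times \Z_n \times \Z_d)$ are guaranteed by \cref{prop: Vector in completion is Bessel}, which ensures every $g \in M_{n,d}(E)$ is a Bessel vector, so no closure or continuity argument beyond what the two lemmas already supply is needed.

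I expect the bulk of the write-up to be a single short display chaining the two lemmas, with one sentence flagging the domain issue above; there are no genuinely hard estimates, as all analytic content was absorbed into \cref{Lemma: Coefficient operator commutation}, \cref{Lemma: Synthesis operator commutation}, and \cref{prop: Vector in completion is Bessel}.
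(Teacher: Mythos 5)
Your proof is correct and follows exactly the paper's route: the paper obtains this proposition precisely by composing \cref{Lemma: Coefficient operator commutation} with \cref{Lemma: Synthesis operator commutation}, using $\modft_{g,h}=\modan_h^*\modan_g$ and $S_{g,h}=C_h^*C_g$. Your extra remark that $\bbracket f,g]$ lands in $M_n(\blangle E,E\rangle)$, the domain on which the synthesis lemma and $\rho_{M_n(A)}$ are defined, is a correct and welcome clarification of a point the paper leaves implicit.
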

As $\rho_{M_{n,d} (E)} : M_{n,d}(E) \to \rho_{M_{n,d} (E)}(M_{n,d}(E))$ is a linear bijection intertwining both the $A$-actions and the $B$-actions, we see by \cref{Proposition: Module frame localizes to Gabor frame} that for $g\in M_{n,d} (E)$, $\modft_g$ is invertible if and only if $S_g \vert_{\rho_{M_{n,d} (E)}(M_{n,d}(E))}$ is invertible. But we also have the following result.
\begin{lemma}
	\label{Lemma: Localization frame operator extends}
	Let $\La \subset \tfp{G}$ be closed and cocompact, and let $g \in M_{n,d} (E)$. Then
	\begin{equation*}
	S_g \vert_{\rho_{M_{n,d} (E)}(M_{n,d}(E))}: \rho_{M_{n,d} (E)}(M_{n,d}(E)) \to \rho_{M_{n,d} (E)}(M_{n,d}(E))
	\end{equation*}
	is invertible if and only if 
	\begin{equation*}
	S_g : L^2 (G \times \Z_n \times \Z_d) \to L^2 (G \times \Z_n \times \Z_d)
	\end{equation*}
	is invertible. 
\end{lemma}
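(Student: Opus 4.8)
The plan is to route both invertibility statements through the invertibility of the module frame operator $\modft_g$ on $M_{n,d}(E)$, and then to exploit that the localization of operators is an isometric $*$-homomorphism of $C^*$-algebras, so that passing from $M_{n,d}(E)$ to the full Hilbert space $H_{M_{n,d}(E)} = L^2(G\times\Z_n\times\Z_d)$ loses no spectral information. Throughout I note that, since $g \in M_{n,d}(E)$, the operator $C_g$ is bounded by \cref{prop: Vector in completion is Bessel}, so $S_g = D_g C_g$ is a genuine bounded positive operator on $H_{M_{n,d}(E)}$.

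First I would record the half of the equivalence that is essentially already established in the paragraph preceding the statement. The map $\rho_{M_{n,d}(E)}$ is injective (since $\tr_{M_d(B)}$ is faithful) with dense range, and by \cref{Proposition: Module frame localizes to Gabor frame} it intertwines $\modft_g$ with $S_g$, i.e. $S_g\,\rho_{M_{n,d}(E)} = \rho_{M_{n,d}(E)}\,\modft_g$. Consequently $S_g\vert_{\rho_{M_{n,d}(E)}(M_{n,d}(E))}$, viewed as a map of the dense subspace to itself, is conjugate to $\modft_g$ via the linear bijection $\rho_{M_{n,d}(E)}$, and hence is invertible if and only if $\modft_g : M_{n,d}(E) \to M_{n,d}(E)$ is invertible.

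The crux is to match $\modft_g$ against $S_g$ on all of $H_{M_{n,d}(E)}$. Let $\Pi : \End_{M_n(A)}(M_{n,d}(E)) \to \mathbb{B}(H_{M_{n,d}(E)})$ denote the localization homomorphism. The operators $\Pi(\modft_g)$ and $S_g$ are both bounded and agree on the dense subspace $\rho_{M_{n,d}(E)}(M_{n,d}(E))$ by \cref{Proposition: Module frame localizes to Gabor frame}, so they coincide: $S_g = \Pi(\modft_g)$. Applying \cref{prop:bimodule_localization}(ii) to the equivalence bimodule $M_{n,d}(E)$, the map $\Pi$ is an isometric, hence injective, $*$-homomorphism, and it is unital because $\Id_{M_{n,d}(E)}$ extends to the identity on $H_{M_{n,d}(E)}$. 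Therefore $\Pi\bigl(\End_{M_n(A)}(M_{n,d}(E))\bigr)$ is a unital $C^*$-subalgebra of $\mathbb{B}(H_{M_{n,d}(E)})$.

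Finally I would invoke \cref{Proposition: C*-subalgebra inverse closed}: a unital $C^*$-subalgebra is spectrally invariant, so $S_g = \Pi(\modft_g)$ is invertible in $\mathbb{B}(H_{M_{n,d}(E)})$ if and only if it is invertible inside the image subalgebra, which, since $\Pi$ is a $*$-isomorphism onto its image, holds if and only if $\modft_g$ is invertible in $\End_{M_n(A)}(M_{n,d}(E))$. Chaining this with the equivalence of the second paragraph yields the claim. I expect the genuinely delicate point to be exactly this last step: invertibility of $S_g$ on the dense subspace $\rho_{M_{n,d}(E)}(M_{n,d}(E))$ does not by itself force invertibility on all of $H_{M_{n,d}(E)}$, and it is precisely the inverse-closedness of the image of $\Pi$ supplied by \cref{Proposition: C*-subalgebra inverse closed} that bridges that gap.
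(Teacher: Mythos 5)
Your proof is correct and follows essentially the same route as the paper: both arguments reduce the statement to invertibility of $\modft_g$ via the intertwining relation of \cref{Proposition: Module frame localizes to Gabor frame}, and both handle the delicate converse direction by combining the isometric $*$-homomorphism property of the localization (\cref{Localization norm preserved}) with inverse-closedness of unital $C^*$-subalgebras (\cref{Proposition: C*-subalgebra inverse closed}). Your treatment of the easy direction (a unital $*$-homomorphism preserves invertibility) is a slightly cleaner phrasing of the paper's ``extend by continuity'' step, but it is the same argument in substance.
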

\begin{proof}
	Suppose first $S_g \vert_{\rho_{M_{n,d} (E)}(M_{n,d}(E))}: \rho_{M_{n,d} (E)}(M_{n,d}(E)) \to \rho_{M_{n,d} (E)}(M_{n,d}(E))$ is invertible. Since any $g \in M_{n,d}(E)$ is a Bessel vector by \cref{prop: Vector in completion is Bessel}, we may extend the operator by continuity to obtain that $S_g : L^2 (G \times \Z_n \times \Z_d) \to L^2 (G \times \Z_n \times \Z_d)$ is invertible as well.
	
	Conversely, suppose $S_g : L^2 (G \times \Z_n \times \Z_d) \to L^2 (G \times \Z_n \times \Z_d)$ is invertible. Since $\ft_g$ is the continuous extension of $\modft_g$, it then follows by \cref{Localization norm preserved} and \cref{Proposition: C*-subalgebra inverse closed} that $\Theta_g$ is invertible, which implies $S_g \vert_{\rho_{M_{n,d} (E)}(M_{n,d}(E))}$ is invertible.
\end{proof}
\begin{rmk}
	From now on we will identify $M_{n,d}(E)$ and its image in the localization, and we will do this without mention. 
\end{rmk}
Combining \cref{Proposition: Module frame localizes to Gabor frame} and \cref{Lemma: Localization frame operator extends} we obtain the following important result.
\begin{prop}
	\label{Proposition: Module frame iff Gabor frame}
	Let $\La \subset \tfp{G}$ be closed and cocompact. For $g \in M_{n,d}(E)$ we have that $\modft_g$ is invertible if and only if $S_g$ is invertible. 
	In other words, $g$ generates a module $(n,d)$-matrix frame for $E$ with respect to $A$ if and only $\GS(g;\La)$ is an $(n,d)$-matrix Gabor frame for $L^2 (G)$.
\end{prop}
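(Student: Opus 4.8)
The plan is to chain together the two immediately preceding results, the only genuinely new content being to keep careful track of which space each operator acts on. The pivotal object is the localization map $\rho := \rho_{M_{n,d}(E)}$, which by construction is a linear bijection from $M_{n,d}(E)$ onto its image $\rho(M_{n,d}(E)) \subset L^2(G \times \Z_n \times \Z_d)$ and which intertwines the module frame operator with the Gabor frame operator. Invertibility of $\modft_g$ is, by the discussion following \cref{Definition: Modular (n,d)-matrix frames} (see \cref{Equation: Single atom reconstruction in matrices}), exactly the statement that $g$ generates a module $(n,d)$-matrix frame, while invertibility of $S_g$ is by \cref{Definition: nd-matrix frames} exactly the statement that $\GS(g;\La)$ is an $(n,d)$-matrix Gabor frame, so once the operator equivalence is proved the rephrasing is immediate.

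First I would specialize \cref{Proposition: Module frame localizes to Gabor frame} to $h=g$, which yields the intertwining relation $S_g\,\rho = \rho\,\modft_g$. This already shows $S_g$ maps $\rho(M_{n,d}(E))$ into itself, so the restriction $S_g\vert_{\rho(M_{n,d}(E))}$ is well defined and satisfies $\modft_g = \rho^{-1}\,S_g\vert_{\rho(M_{n,d}(E))}\,\rho$ on $M_{n,d}(E)$. Since $\rho$ is a bijection onto $\rho(M_{n,d}(E))$, conjugation by $\rho$ preserves invertibility, so $\modft_g$ is invertible as an operator on $M_{n,d}(E)$ if and only if $S_g\vert_{\rho(M_{n,d}(E))}$ is invertible as an operator on $\rho(M_{n,d}(E))$. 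Next I would invoke \cref{Lemma: Localization frame operator extends}, which states precisely that $S_g\vert_{\rho(M_{n,d}(E))}$ is invertible if and only if $S_g : L^2(G \times \Z_n \times \Z_d) \to L^2(G \times \Z_n \times \Z_d)$ is invertible. Composing the two equivalences gives $\modft_g$ invertible $\iff$ $S_g$ invertible, which is the asserted equivalence, and the frame-theoretic reformulation then follows from the definitions cited above.

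The step carrying all the genuine analytic weight is the passage from the dense subspace $\rho(M_{n,d}(E))$ to the full Hilbert space inside \cref{Lemma: Localization frame operator extends}: extending by continuity in one direction relies on every element of $M_{n,d}(E)$ being a Bessel vector (\cref{prop: Vector in completion is Bessel}), while descending in the other direction uses that $\rho$ is an isometric $*$-homomorphism on operators (\cref{Localization norm preserved}, \cref{prop:bimodule_localization}) together with spectral invariance and \cref{Proposition: C*-subalgebra inverse closed}. Since that lemma is already available, the proposition itself presents no real obstacle beyond the purely formal observation that invertibility transfers across the conjugating bijection $\rho$; the only thing demanding care is not conflating the operator $\modft_g$ on $M_{n,d}(E)$ with the operator $S_g$ on the strictly larger space $L^2(G \times \Z_n \times \Z_d)$, which is exactly why the intermediate restriction $S_g\vert_{\rho(M_{n,d}(E))}$ must be threaded through both stages of the argument.
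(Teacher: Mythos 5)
Your argument is correct and follows the paper's own route exactly: the paper likewise observes that $\rho_{M_{n,d}(E)}$ is a bijection intertwining $\modft_g$ with $S_g\vert_{\rho_{M_{n,d}(E)}(M_{n,d}(E))}$ (via \cref{Proposition: Module frame localizes to Gabor frame}) and then combines this with \cref{Lemma: Localization frame operator extends} to pass to the full Hilbert space. Your additional care in distinguishing the restricted operator from $S_g$ on all of $L^2(G\times\Z_n\times\Z_d)$ matches the paper's intent precisely.
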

We also have the following important corollary.
\begin{corl}
	\label{Corollary: Dual Gabor frames iff extend to identity}
	Let $\La \subset \tfp{G}$ be closed and cocompact, and let $g,h \in M_{n,d} (E)$. Then $g$ and $h$ generate dual (n,d)-matrix Gabor frames for $L^2 (G)$ with respect to $\La$ if and only if $[g,h\bracketb$ extends to the identity operator on $L^2 (G\times \Z_n \times \Z_d)$.	
\end{corl}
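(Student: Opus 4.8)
The plan is to reduce both sides of the claimed equivalence to a single statement about the localization of the module frame-like operator $\modft_{g,h}$, and then quote \cref{Proposition: Module frame localizes to Gabor frame}. First I would recall that, by \cref{Definition: nd-matrix frames}, the assertion that $\GS(g;\La)$ and $\GS(h;\La)$ form a dual pair of $(n,d)$-matrix Gabor frames is by definition the reconstruction identity \eqref{Equation: (n,d)-matrix frames} holding for all $f$; unwinding the definitions of $C_g$ and $D_h$, this is exactly the statement that $S_{g,h} = D_h C_g$ equals the identity operator on $L^2 (G \times \Z_n \times \Z_d)$.

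Next I would identify $\modft_{g,h}$ with right multiplication by $[g,h\bracketb$. For $f \in M_{n,d}(E)$ we have $\modft_{g,h} f = \bbracket f,g] h$, and the Morita associativity identity $\bbracket f,g] h = f[g,h\bracketb$ valid in $M_{n,d}(E)$ shows that $\modft_{g,h}$ is precisely the adjointable operator on $M_{n,d}(E)$ given by right multiplication by the element $[g,h\bracketb \in M_d (B)$. It is the continuous extension of this operator to $L^2 (G \times \Z_n \times \Z_d)$ that is meant by the phrase that $[g,h\bracketb$ ``extends to an operator on $L^2$''.

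The core of the argument is then immediate from \cref{Proposition: Module frame localizes to Gabor frame}, which gives $S_{g,h}\,\rho_{M_{n,d}(E)} = \rho_{M_{n,d}(E)}\,\modft_{g,h}$. Thus $S_{g,h}$ agrees with the localization of $\modft_{g,h}$ on the dense subspace $\rho_{M_{n,d}(E)}(M_{n,d}(E)) \subset L^2 (G \times \Z_n \times \Z_d)$. Since every element of $M_{n,d}(E)$ is a Bessel vector by \cref{prop: Vector in completion is Bessel}, the operator $S_{g,h}$ is bounded and is therefore the unique continuous extension of $\modft_{g,h}$, that is, of right multiplication by $[g,h\bracketb$. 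Hence $S_{g,h} = \Id$ if and only if this extension equals $\Id$, which combined with the first step yields the equivalence.

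I expect the only genuine subtlety to lie in pinning down the meaning of ``extends to the identity'' and in verifying that the two extensions in play, namely the bounded frame-like operator $S_{g,h}$ and the localization of right multiplication by $[g,h\bracketb$, are literally the same operator; this is secured by the boundedness coming from the Bessel property (uniqueness of continuous extensions from a dense subspace) together with the commuting square of \cref{Proposition: Module frame localizes to Gabor frame}. As an aside, since the localization map $\End_{M_n (A)}(M_{n,d}(E)) \to \mathbb{B}(L^2 (G \times \Z_n \times \Z_d))$ is isometric, hence injective, by \cref{prop:bimodule_localization}, the condition is in fact equivalent to the module-level identity $[g,h\bracketb = 1_{M_d (B)}$; but the operator-theoretic formulation is the one needed in the Gabor-analytic sequel.
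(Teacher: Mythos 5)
Your proof is correct and follows essentially the same route as the paper's: both arguments come down to the Morita associativity identity $\bbracket f,g]h = f[g,h\bracketb$ on the dense submodule $M_{n,d}(E)$ together with the Bessel property and extension by continuity. The only cosmetic difference is that you package the density argument through the commuting square of \cref{Proposition: Module frame localizes to Gabor frame} and the operator $S_{g,h}$, whereas the paper manipulates the reconstruction formula directly and records the intermediate identity $[g,h\bracketb = 1_{M_d(B)}$ (which you note as an aside).
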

\begin{proof}
	Suppose first $g,h \in M_{n,d}(E)$ generate dual $(n,d)$-matrix Gabor frames for $L^2 (G)$ with respect to $\La$. Then we know that for all $f \in M_{n,d}(E)$ we have
	\begin{equation*}
	f = \bbracket f,g ]h = f [g,h\bracketb,
	\end{equation*}
	from which we as before deduce that $[g,h\bracketb = 1_{M_d (B)}$. This extends by continuity to the identity operator on all of $L^2 (G \times \Z_n \times \Z_d)$.
	
	Conversely, if $[g,h\bracketb$ extends to the identity operator on $L^2 (G \times \Z_n \times \Z_d)$, then $[g,h\bracketb$ acts as the identity on $M_{n,d}(E)$. For any $f \in M_{n,d}(E)$ we then have 
	\begin{equation*}
	f = f [g,h\bracketb = \bbracket f,g]h,
	\end{equation*}
	hence \eqref{Equation: (n,d)-matrix frames} holds for all $f \in M_{n,d}(E)$. But this extends to $L^2 (G \times \Z_n \times \Z_d)$ by continuity, which implies that $g$ and $h$ generate dual $(n,d)$-matrix Gabor frames.
\end{proof}
We wish to establish a duality principle for $(n,d)$-matrix Gabor frames. For this we also need to treat $(n,d)$-matrix Gabor Riesz sequences and relate them to \cref{Definition: Modular Riesz sequence}. 
\begin{defn}
	\label{Definition: nd-matrix Gabor Riesz sequence}
	Let $g \in L^2 (G \times \Z_n \times \Z_d)$. We say $g$ generates an \textit{$(n,d)$-matrix Gabor Riesz sequence for $L^2 (G)$ with respect to $\La$}, or that $\GS(g;\La)$ is an \textit{$(n,d)$-matrix Gabor Riesz sequence for $L^2 (G)$,} if 
	\begin{equation}
	C_g C_g^*: L^2 (\La \times \Z_n \times \Z_n) \to L^2 (\La \times \Z_n \times \Z_n)
	\end{equation}
	is an isomorphism. Equivalently, there exists $h\in L^2 (G \times \Z_n \times \Z_d)$ such that for all $a \in L^2 (\La \times \Z_n \times \Z_n)$ we have
	\begin{equation}
	\label{Equation: (n,d)-matrix Gabor Riesz sequence}
	a_{r,s}(\mu) = \sum_{i \in \Z_d} \sum_{j\in \Z_n} \langle \int_{\La} a_{r,j}(\la) \pi(\la) g_{j,i} d\la, \pi (\mu) h_{s,i}\rangle
	\end{equation}
	for all $r,s \in \Z_n$ and all $\mu \in \La$. If \eqref{Equation: (n,d)-matrix Gabor Riesz sequence} is satisfied we will say $h$ generates a \textit{dual $(n,d)$-matrix Gabor Riesz sequence of $g$}.  
\end{defn}
\begin{rmk}
	Note that the equivalence of the definitions of $(n,d)$-matrix Gabor Riesz sequences in \cref{Definition: nd-matrix Gabor Riesz sequence} follows by \cite[Theorem 3.6.6]{Chr16} and \cref{prop: Vector in completion is Bessel}.
\end{rmk}
\begin{rmk}
	\eqref{Equation: (n,d)-matrix Gabor Riesz sequence} can be seen to be equivalent to $C_g C_h^* = C_h C_g^* = \Id_{L^2 (\La \times \Z_n \times \Z_n)}$.
\end{rmk}

Before treating localization of module matrix Riesz sequences and how they relate to matrix Gabor Riesz sequences, we do a necessary but justified simplification. Recall that existence of finite module matrix Riesz sequences for $M_{n,d}(E)$ with respect to $M_n (A)$ requires $A$ to be  unital by \cref{Proposition: Modular Ron-Shen Duality}. In the following we therefore let $\La$ be discrete, but not necessarily cocompact. Hence in the following, $A$ is unital with a faithful trace, but $B$ might not have that property. By \cite[p. 251]{JaLe16} we know that $\GS (g; \La)$ is a Bessel system with Bessel bound $D$ if and only if $\GS (g; \Lao)$ is a Bessel system with Bessel bound $D$. Applying \cref{prop: Vector in completion is Bessel} we immediately get the following from \cref{Lemma: Coefficient operator commutation} and \cref{Lemma: Synthesis operator commutation}.
\begin{prop}
	\label{Proposition: Localization of modular Riesz sequence is Riesz sequence for image}
	Let $\La \subset \tfp{G}$ be discrete. For all $g,h \in M_{n,d} (E)$ we have $(C_h C_g^*) \circ \rho_{M_n (A)} = \rho_{M_n (A)}\circ (\modan_h \modan_g^*)$. Equivalently, the diagram
	\begin{center}
		\begin{tikzpicture}[scale=1.0]
		\node (1) at (0,2) {$M_{n}(A)$};
		\node (2) at (6,2) {$M_{n} (A)$};
		\node (3) at (0,0) {$L^2 (\La \times \Z_n \times \Z_n)$};
		\node (4) at (6,0) {$L^2 (\La \times \Z_n \times \Z_n)$};
		\path[->,font=\scriptsize]
		(1) edge[auto] node[auto] {$\modan_h \modan_g^*$} (2)
		(1) edge node[auto,swap] {$\rho_{M_{n}(A)}$} (3)
		(2) edge node[auto] {$\rho_{M_{n}(A)}$} (4)
		(3) edge node[auto] {$C_h C_g^*$} (4);
		\end{tikzpicture}
	\end{center}
	commutes. 
\end{prop}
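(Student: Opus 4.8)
The plan is to deduce the identity by a diagram chase, composing the commutation relations of \cref{Lemma: Coefficient operator commutation} and \cref{Lemma: Synthesis operator commutation}. The only genuine issue is that those two lemmas were stated for $\La$ cocompact, whereas here $\La$ is discrete, so before chasing the squares I would first verify that their content transfers. Since $\La$ is discrete, $\Lao$ is cocompact and $A = C^*(\La,c)$ is unital by \cref{Lemma: Group C*-alg unital iff discrete}; hence $\tr_A$ extends to all of $A$, the localization map $\rho_{M_n (A)}$ is defined on all of $M_n(A)$, and the domain of $\modan_g^*$ in \cref{Lemma: Synthesis operator commutation} correspondingly enlarges from $M_n(\blangle E,E\rangle)$ to $M_n(A)$. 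Moreover, by the Bessel transference of \cite[p. 251]{JaLe16} together with \cref{prop: Vector in completion is Bessel} applied to the cocompact group $\Lao$, every $g,h \in M_{n,d}(E)$ is still a Bessel vector, so $C_g, C_h$ and their adjoints are bounded. The two commutation identities are established by the same pointwise computations as in those lemmas; cocompactness was used there only to secure boundedness, which is now furnished by the Bessel property.

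With this in hand I would simply compose. \cref{Lemma: Coefficient operator commutation} applied to $h$ gives $\rho_{M_n (A)} \circ \modan_h = C_h \circ \rho_{M_{n,d}(E)}$, while \cref{Lemma: Synthesis operator commutation} applied to $g$ gives $\rho_{M_{n,d}(E)} \circ \modan_g^* = C_g^* \circ \rho_{M_n (A)}$. Chaining these two relations yields
\begin{equation*}
\rho_{M_n (A)} \circ (\modan_h \modan_g^*) = C_h \circ \bigl( \rho_{M_{n,d}(E)} \circ \modan_g^* \bigr) = C_h \circ C_g^* \circ \rho_{M_n (A)} = (C_h C_g^*) \circ \rho_{M_n (A)},
\end{equation*}
which is exactly the asserted identity and the commutativity of the square.

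The step I expect to be the main obstacle is the first one, namely matching hypotheses: justifying that the two commutation lemmas, proved under cocompactness, may be invoked when only $\La$ is discrete. The resolution is that the equalities hold a priori as identities of (possibly unbounded) maps on the dense images $\rho_{M_n (A)}(M_n(A))$ and $\rho_{M_{n,d}(E)}(M_{n,d}(E))$, and the Bessel transference of \cite{JaLe16} upgrades all four operators $C_g, C_h, C_g^*, C_h^*$ to bounded operators on the relevant Hilbert spaces. Both sides of the displayed identity are then bounded and agree on the dense subspace $\rho_{M_n (A)}(M_n(A))$, hence agree everywhere. Once boundedness is secured, the computation is purely formal.
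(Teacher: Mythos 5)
Your proposal is correct and follows essentially the same route as the paper: the paper likewise obtains this proposition by combining the Bessel transference of \cite[p. 251]{JaLe16} with \cref{prop: Vector in completion is Bessel} (applied to the cocompact $\Lao$) and then composing \cref{Lemma: Coefficient operator commutation} with \cref{Lemma: Synthesis operator commutation}, noting that unitality of $A$ when $\La$ is discrete enlarges the domain of the synthesis commutation to all of $M_n(A)$. Your treatment of the hypothesis transfer is, if anything, more explicit than the paper's.
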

As $\rho_{M_{n} (A)} : M_{n}(A) \to \rho_{M_{n} (A)}(M_{n}(A))$ is a linear bijection respecting the actions of $A$, we see by \cref{Proposition: Localization of modular Riesz sequence is Riesz sequence for image} that for $g\in M_{n,d} (E)$, $\modan_g \modan_g^*$ is an isomorphism if and only if $(C_g C_g^*)\vert_{\rho_{M_{n} (A)}(M_{n}(A))}$ is an isomorphism. In analogy with \cref{Lemma: Localization frame operator extends} we have the following result.
\begin{lemma}
	\label{Lemma: Localization of Riesz sequence extends to Riesz sequence}
	Let $\La \subset \tfp{G}$ be discrete. For $g \in M_{n,d}(E)$ we have that 
	\begin{equation}
	(C_gC_g^*)\vert_{\rho_{M_n (A)}(M_n (A))}: \rho_{M_n (A)}(M_n (A)) \to \rho_{M_n (A)}(M_n (A))
	\end{equation}
	is invertible if and only if
	\begin{equation}
	C_g C_g^* \colon L^2 (\La \times \Z_n \times \Z_n) \to L^2 (\La\times \Z_n \times \Z_n)
	\end{equation}
	is invertible.
\end{lemma}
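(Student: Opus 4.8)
The plan is to recognize both sides of the claimed equivalence as invertibility statements inside a single pair of $C^*$-algebras linked by the localization map, and then transport invertibility across that map. By the discussion immediately preceding the lemma --- itself an application of \cref{Proposition: Localization of modular Riesz sequence is Riesz sequence for image} --- the operator $(C_gC_g^*)\vert_{\rho_{M_n (A)}(M_n(A))}$ is invertible exactly when $\modan_g\modan_g^*\colon M_n(A)\to M_n(A)$ is an isomorphism. So it suffices to show that $\modan_g\modan_g^*$ is invertible as a module operator if and only if $C_gC_g^*$ is invertible on all of $L^2(\La\times\Z_n\times\Z_n)$, and the entire argument reduces to the behaviour of one localization $*$-homomorphism. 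This is the exact analogue, on the $A$-side, of the reasoning already used for the frame operator in \cref{Lemma: Localization frame operator extends}.

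First I would fix the functional-analytic setup. Since $\La$ is discrete, $A$ is unital with a faithful finite trace $\tr_A$, hence $\tr_{M_n(A)}$ is a faithful trace by \cref{Lemma: Induced trace positive} and in particular a faithful positive linear functional. Viewing $M_n(A)$ as a left Hilbert $M_n(A)$-module over itself with $\blangle a_1,a_2\rangle=a_1a_2^*$, its localization in $\tr_{M_n(A)}$ is $L^2(\La\times\Z_n\times\Z_n)$, and \cref{Localization norm preserved} supplies an injective $*$-homomorphism $\rho\colon\End_{M_n(A)}(M_n(A))\to\mathbb{B}(L^2(\La\times\Z_n\times\Z_n))$. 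The operator $\modan_g\modan_g^*$ is a composition of adjointable maps, hence a positive element of $\End_{M_n(A)}(M_n(A))$. I would then verify that $\rho(\modan_g\modan_g^*)=C_gC_g^*$: by \cref{prop: Vector in completion is Bessel} the vector $g$ is Bessel, so $C_gC_g^*$ is bounded, and by the intertwining relation of \cref{Proposition: Localization of modular Riesz sequence is Riesz sequence for image} the two bounded operators $C_gC_g^*$ and $\rho(\modan_g\modan_g^*)$ agree on the dense subspace $\rho_{M_n (A)}(M_n(A))$, hence everywhere.

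With this identification both implications become formal. For the forward direction, if $\modan_g\modan_g^*$ is invertible in the unital $C^*$-algebra $\End_{M_n(A)}(M_n(A))$, then the $*$-homomorphism $\rho$ carries it to an invertible element, so $C_gC_g^*=\rho(\modan_g\modan_g^*)$ is invertible on $L^2(\La\times\Z_n\times\Z_n)$. For the converse I would use that the image $\rho(\End_{M_n(A)}(M_n(A)))$ is a $C^*$-subalgebra of $\mathbb{B}(L^2(\La\times\Z_n\times\Z_n))$ containing the identity operator: if $C_gC_g^*$ is invertible in $\mathbb{B}(L^2(\La\times\Z_n\times\Z_n))$, then \cref{Proposition: C*-subalgebra inverse closed} places its inverse back inside the image of $\rho$, and injectivity of $\rho$ together with the homomorphism property forces $\modan_g\modan_g^*$ to be invertible in $\End_{M_n(A)}(M_n(A))$.

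I expect the only genuinely non-formal point to be the converse. Bijectivity of $C_gC_g^*$ on the dense image $\rho_{M_n (A)}(M_n(A))$ alone does not recover invertibility of the module operator; what does the work is the spectral invariance (inverse-closedness) guaranteed by \cref{Proposition: C*-subalgebra inverse closed} for a unital $C^*$-subalgebra. Accordingly I would take care to confirm that $\rho$ really is an injective $*$-homomorphism onto a $C^*$-subalgebra sharing the unit of $\mathbb{B}(L^2(\La\times\Z_n\times\Z_n))$, which is exactly what \cref{Localization norm preserved} and the unitality of the localization provide; everything else is routine bookkeeping.
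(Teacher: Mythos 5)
Your proposal is correct and follows essentially the same route as the paper: the forward direction transports invertibility through the localization map (the paper phrases this as extending by continuity, you phrase it via the $*$-homomorphism $\rho$ of \cref{Localization norm preserved}, which is the cleaner formulation of the same step), and the converse direction is exactly the paper's argument combining \cref{Localization norm preserved} with the inverse-closedness of unital $C^*$-subalgebras from \cref{Proposition: C*-subalgebra inverse closed}. Your explicit identification of both conditions with invertibility of $\modan_g\modan_g^*$ and your remark that density alone would not suffice for the converse are exactly the points the paper relies on implicitly.
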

\begin{proof}
	Suppose first that $(C_gC_g^*)\vert_{\rho_{M_n (A)}(M_n (A))}: \rho_{M_n (A)}(M_n (A)) \to \rho_{M_n (A)}(M_n (A))$ is invertible. Since any $g \in M_{n,d}(E)$ is a Bessel vector by \cref{prop: Vector in completion is Bessel}, we may extend the operator by continuity to obtain that $C_g C_g^* \colon L^2 (\La \times \Z_n \times \Z_n) \to L^2 (\La\times \Z_n \times \Z_n)$ is invertible as well.
	
	Conversely, suppose $C_g C_g^* \colon L^2 (\La \times \Z_n \times \Z_n) \to L^2 (\La\times \Z_n \times \Z_n)$ is invertible. Since $C_g C_g^*$ is the continuous extension of $\modan_g \modan_g^*$, it then follows by  \cref{Localization norm preserved} and \cref{Proposition: C*-subalgebra inverse closed} that $\modan_g \modan_g^*$ is invertible as well, which implies $(C_gC_g^*)\vert_{\rho_{M_n (A)}(M_n (A))}: \rho_{M_n (A)}(M_n (A)) \to \rho_{M_n (A)}(M_n (A))$ is invertible.
\end{proof}
\begin{rmk}
	From now on we will identify $M_n (\blangle E,E\rangle)$ (and potentially a larger domain) and its localization. The same goes for $M_d (B)$.
\end{rmk}
Now the following is an immediate consequence.
\begin{prop} 
	\label{Proposition: Module Riesz sequence iff Gabor Riesz sequence}
	Let $\La \subset \tfp{G}$ be discrete. For $g \in M_{n,d}(E)$ we have that $\modan_g\modan_g^*$ is invertible if and only if $C_g C_g^* $ is invertible. In other words, $g$ generates a module $(n,d)$-matrix Riesz sequence for $E$ with respect to $A$ if and only if $\GS(g; \La)$ is an $(n,d)$-matrix Gabor Riesz sequence for $L^2 (G)$.
\end{prop}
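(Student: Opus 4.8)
The plan is to read this off as a concatenation of the two transport results established immediately above, since the real analytic content has already been absorbed into \cref{Lemma: Localization of Riesz sequence extends to Riesz sequence}. The discreteness of $\La$ guarantees that $A$ is unital with a genuine faithful finite trace $\tr_A$, so that the localization map $\rho_{M_n(A)}$ and all of the preceding machinery are available; this is the only place the hypothesis enters.

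First I would invoke the commuting square of \cref{Proposition: Localization of modular Riesz sequence is Riesz sequence for image}, namely $(C_g C_g^*)\circ \rho_{M_n(A)} = \rho_{M_n(A)}\circ(\modan_g\modan_g^*)$. Because $\tr_A$ is faithful, $\rho_{M_n(A)}$ is injective and hence a linear bijection from $M_n(A)$ onto its image $\rho_{M_n(A)}(M_n(A))$, and it respects the $A$-action; consequently it conjugates $\modan_g\modan_g^*$ to the restricted operator $(C_g C_g^*)|_{\rho_{M_n(A)}(M_n(A))}$. Since invertibility is preserved under conjugation by a bijection, $\modan_g\modan_g^*$ is an isomorphism if and only if $(C_g C_g^*)|_{\rho_{M_n(A)}(M_n(A))}$ is an isomorphism, which is exactly the observation recorded just before the statement. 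Next I would apply \cref{Lemma: Localization of Riesz sequence extends to Riesz sequence}, which upgrades invertibility of the restricted operator to invertibility of $C_g C_g^*$ on all of $L^2(\La \times \Z_n \times \Z_n)$, and conversely. Chaining these two equivalences gives the core assertion: $\modan_g\modan_g^*$ is invertible if and only if $C_g C_g^*$ is invertible.

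Finally I would translate this into the frame-theoretic language of the ``in other words'' clause. By \cref{Definition: Modular Riesz sequence}, $g$ generates a module $(n,d)$-matrix Riesz sequence for $E$ with respect to $A$ precisely when $\modan_g\modan_g^*$ is an isomorphism, and by \cref{Definition: nd-matrix Gabor Riesz sequence}, $\GS(g;\La)$ is an $(n,d)$-matrix Gabor Riesz sequence for $L^2(G)$ precisely when $C_g C_g^*$ is an isomorphism, so the equivalence just proved delivers the reformulation verbatim. I do not expect a genuine obstacle here: the one substantive step, the passage from the dense image $\rho_{M_n(A)}(M_n(A))$ to the full Hilbert space, is precisely the content of \cref{Lemma: Localization of Riesz sequence extends to Riesz sequence}, which rests on the Bessel property of \cref{prop: Vector in completion is Bessel} together with \cref{Localization norm preserved} and \cref{Proposition: C*-subalgebra inverse closed}. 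The only care needed in the present argument is bookkeeping, keeping straight that $\modan_g\modan_g^*$ acts on $M_n(A)$ while $C_g C_g^*$ acts on $L^2(\La \times \Z_n \times \Z_n)$, with the two linked through the invariant subspace $\rho_{M_n(A)}(M_n(A))$.
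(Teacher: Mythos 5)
Your argument is correct and is exactly the route the paper takes: it records this proposition as an immediate consequence of \cref{Proposition: Localization of modular Riesz sequence is Riesz sequence for image} (the conjugation of $\modan_g\modan_g^*$ to the restricted operator via the injective localization map) together with \cref{Lemma: Localization of Riesz sequence extends to Riesz sequence}. Nothing is missing.
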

By the proof of \cref{Lemma: Localization of Riesz sequence extends to Riesz sequence} we then have the following statement.
\begin{corl}
	\label{Dual Gabor Riesz sequences iff extend to identity}
	Let $\La \subset \tfp{G}$ be discrete. Suppose $g,h \in M_{n,d} (E)$. Then $g$ and $h$ generate dual $(n,d)$-matrix Gabor Riesz sequences for $L^2 (G)$ with respect to $\La$ if and only if $\bbracket g,h ]$ extends to the identity operator on $L^2 (\La \times \Z_n \times \Z_n)$. 
\end{corl}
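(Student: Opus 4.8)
The plan is to mirror the proof of \cref{Corollary: Dual Gabor frames iff extend to identity}, working now on the $M_n(A)$-side rather than the $M_d(B)$-side, and to extract the statement from the localization machinery of \cref{Proposition: Localization of modular Riesz sequence is Riesz sequence for image} in exactly the way \cref{Lemma: Localization of Riesz sequence extends to Riesz sequence} does. Since $\La$ is discrete, $A$, and hence $M_n(A)$, is unital. The key computation is that $\modan_h\modan_g^*$ acts on $M_n(A)$ by right multiplication, namely $\modan_h\modan_g^*a = \bbracket ag,h] = a\bbracket g,h]$; thus $\modan_h\modan_g^* = \Id_{M_n(A)}$ if and only if $\bbracket g,h] = 1_{M_n(A)}$. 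Taking adjoints and using $\bbracket h,g] = \bbracket g,h]^*$, the same identity also forces $\modan_g\modan_h^* = \Id_{M_n(A)}$. Recall moreover that, by the remark following \cref{Definition: nd-matrix Gabor Riesz sequence}, $g$ and $h$ generating dual $(n,d)$-matrix Gabor Riesz sequences is precisely the condition $C_gC_h^* = C_hC_g^* = \Id_{L^2(\La\times\Z_n\times\Z_n)}$, and by \cref{Proposition: Localization of modular Riesz sequence is Riesz sequence for image} the module operators $\modan_h\modan_g^*$ and $\modan_g\modan_h^*$ localize to $C_hC_g^*$ and $C_gC_h^*$ respectively.

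For the forward direction, suppose $g$ and $h$ generate dual $(n,d)$-matrix Gabor Riesz sequences, so that $C_hC_g^* = \Id_{L^2(\La\times\Z_n\times\Z_n)}$. Evaluating on the dense image $\rho_{M_n(A)}(M_n(A))$ and using \cref{Proposition: Localization of modular Riesz sequence is Riesz sequence for image}, I obtain $\rho_{M_n(A)}(\modan_h\modan_g^*a) = \rho_{M_n(A)}(a)$ for all $a\in M_n(A)$. As $\tr_{M_n(A)}$ is faithful by \cref{Lemma: Induced trace positive}, the map $\rho_{M_n(A)}$ is injective, so $\modan_h\modan_g^* = \Id_{M_n(A)}$, and the key computation gives $\bbracket g,h] = 1_{M_n(A)}$. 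Right multiplication by $1_{M_n(A)}$ is the identity on $M_n(A)$, and since each element of $M_n(A)$ acts as a bounded operator on $L^2(\La\times\Z_n\times\Z_n)$, this extends by continuity to the identity operator on $L^2(\La\times\Z_n\times\Z_n)$.

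For the converse, suppose $\bbracket g,h]$ extends to the identity operator on $L^2(\La\times\Z_n\times\Z_n)$. Then on the dense subspace $\rho_{M_n(A)}(M_n(A))$ the bounded operator $a\mapsto a\bbracket g,h]$ agrees with the identity, that is $\rho_{M_n(A)}(a\bbracket g,h]) = \rho_{M_n(A)}(a)$ for all $a\in M_n(A)$; injectivity of $\rho_{M_n(A)}$ then yields $a\bbracket g,h] = a$ in $M_n(A)$, and choosing $a = 1_{M_n(A)}$ gives $\bbracket g,h] = 1_{M_n(A)}$, whence also $\bbracket h,g] = \bbracket g,h]^* = 1_{M_n(A)}$. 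By the key computation this gives $\modan_h\modan_g^* = \modan_g\modan_h^* = \Id_{M_n(A)}$, and localizing through \cref{Proposition: Localization of modular Riesz sequence is Riesz sequence for image} yields $C_hC_g^* = C_gC_h^* = \Id_{L^2(\La\times\Z_n\times\Z_n)}$, so $g$ and $h$ generate dual $(n,d)$-matrix Gabor Riesz sequences.

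The step I expect to require the most care is the precise interpretation of the phrase \emph{``extends to the identity operator''}: one must move cleanly between the element $\bbracket g,h]\in M_n(A)$ and the bounded right-multiplication operator it induces on the localization $L^2(\La\times\Z_n\times\Z_n)$. This is exactly where discreteness of $\La$ is used, since it makes $M_n(A)$ unital and thereby allows $\bbracket g,h]$ to be compared with $1_{M_n(A)}$; injectivity of $\rho_{M_n(A)}$, coming from faithfulness of the trace, then bridges the module identity and its localized counterpart in both directions.
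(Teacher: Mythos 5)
Your proof is correct and follows essentially the same route as the paper's: both reduce the Gabor-side condition to the module identity $a = a\bbracket g,h]$ for all $a$ in the dense subspace $M_n(A)$ via the localization, and pass between $M_n(A)$ and $L^2(\La\times\Z_n\times\Z_n)$ by continuity. The only cosmetic difference is that you explicitly extract $\bbracket g,h] = 1_{M_n(A)}$ using unitality of $M_n(A)$ and injectivity of $\rho_{M_n(A)}$, whereas the paper works directly with the written-out reconstruction formula; the substance is identical.
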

\begin{proof}
	Suppose first that $g$ and $h$ generate dual $(n,d)$-matrix Gabor Riesz sequences for $L^2 (G)$ with respect to $\La$. Then for all $a \in M_n (A)$ we have
	\begin{equation*}
	(a_{r,s}) = \{\sum_{i \in \Z_d} \sum_{j\in \Z_n} \langle \int_{\La} a_{r,j}(\la) \pi(\la) g_{j,i} d\la, \pi (\mu) h_{s,i}\rangle\}_{\mu \in \La, r,s\in \Z_n},
	\end{equation*}
	which is equivalent to $a= a\bbracket g,h] $ for all $a \in M_n (A)$. But the 
	first expression extends by continuity to $L^2 (\La \times \Z_n \times \Z_n)$, so $\bbracket g,h]$ extends to the identity on $L^2 (\La \times \Z_n \times \Z_n)$.
	
	Conversely, suppose $\bbracket g,h ]$ extends to the identity on $L^2 (\La \times \Z_n \times \Z_n)$. Once again, for all $a \in M_n (A)$ we then have
	\begin{equation*}
	(a_{r,s}) = \{\sum_{i \in \Z_d} \sum_{j\in \Z_n} \langle \int_{\La} a_{r,j}(\la) \pi(\la) g_{j,i} d\la, \pi (\mu) h_{s,i}\rangle\}_{\mu \in \La, r,s\in \Z_n},
	\end{equation*}
	which again extends to $L^2 (G \times \Z_n \times \Z_n)$. Hence $g$ and $h$ are dual $(n,d)$-matrix Gabor Riesz sequences for $L^2 (G)$ with respect to $\La$.
\end{proof}
	Note how the above results guarantee that when $\La \subset \tfp{G}$ is closed and cocompact and $g \in M_{n,d}(E)$ is such that $\GS(g;\La)$ is an $(n,d)$-matrix Gabor frame for $L^2 (G)$, the canonical dual frame $S_g^{-1}g \in M_{n,d}(E)$. Indeed,
	\begin{equation*}
		S_g^{-1} g = \modft_g^{-1} g = [g,g\bracketb^{-1} \in M_{n,d} (E).
	\end{equation*}
	Likewise, for Riesz sequences there is the notion of \textit{canonical biorthogonal atom}, see for example \cite[p. 160]{Chr16}. Restricting to $\La$ discrete, it is given by $(S^{B}_g)^{-1}g$, where $S^{B}_g$ is the frame operator with respect to the right hand side, that is, with respect to $\Lao$. We see that for all $f \in M_{n,d} (E)$
	\begin{equation*}
		S^{B}_g f = (\modan^B_g)^* \modan^B_g f =  (\modan^B_g)^* ([g,f\bracketb) = g[g,f\bracketb = \bbracket g,g] f.
	\end{equation*}
	Thus it follows that
	\begin{equation*}
		(S^{B}_g)^{-1}g = (\modft^B_g)^{-1}g  = \bbracket g,g ]^{-1} g \in M_{n,d}(E).
	\end{equation*}
	Hence for both matrix Gabor frames and matrix Gabor Riesz sequences with generating atom in $M_{n,d}(E)$, the canonically associated dual atoms are also in $M_{n,d} (E)$.

%
We have the following result which shows that in the cases we are interested in, if the generating atom is regular, the canonical dual atom has the same regularity.
\begin{prop}
	\label{Proposition: Dual atom of regular is regular when discrete}
	Let $g \in M_{n,d}(\E)$.
	\begin{enumerate}
		\item [i)] If $\GS (g; \La)$ is an $(n,d)$-matrix Gabor frame for $L^2 (G)$ and $\La$ is closed and cocompact in $G\times \widehat{G}$, then the canonical dual atom is in $M_{n,d}(\E)$. 
		\item [ii)] If $\GS (g; \La)$ is an $(n,d)$-matrix Gabor Riesz sequence for $L^2 (G)$ and $\La$ is discrete, then the canonical biorthogonal atom is also in $M_{n,d} (\E)$.
	\end{enumerate}
\end{prop}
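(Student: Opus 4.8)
The plan is to view both statements as the matrix-lifted form of \cref{Proposition: Canonical dual of regular is regular}, applied on the appropriate side of the Morita context, with the discreteness hypotheses supplying exactly what is needed to invoke spectral invariance. Recall that $\E = S_0(G)$ is an $\A$-$\B$-pre-equivalence bimodule and that, as set up in \cref{Subsection: Extending to several generators}, $M_{n,d}(\E)$ is an $M_n(\A)$-$M_d(\B)$-pre-equivalence bimodule inside $M_{n,d}(E)$, with the containments $M_{n,d}(\E)M_d(\B)\subset M_{n,d}(\E)$ and $M_n(\A)M_{n,d}(\E)\subset M_{n,d}(\E)$. In both parts the canonical dual (resp. biorthogonal) atom is an explicit product of $g$ with the inverse of one of its inner products, so the whole matter reduces to showing that this inverse lies in the relevant regular matrix algebra.

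For part i), I would first note that $\La$ cocompact forces $\Lao$ discrete, so $B$ is unital by \cref{Lemma: Group C*-alg unital iff discrete} and $\B$ is spectral invariant in $B$ with the same unit by \cref{Proposition: Spectral invariance when discrete}; then \cref{Lemma: Spectral invariance lifts to matrices} makes $M_d(\B)$ spectral invariant in $M_d(B)$ with the same unit. Since $\GS(g;\La)$ is an $(n,d)$-matrix Gabor frame, \cref{Proposition: Module frame iff Gabor frame} gives that $\modft_g$ is invertible, and by the matrix lift of \cref{Proposition: Modular Ron-Shen Duality} this is equivalent to $[g,g\bracketb$ being invertible in $M_d(B)$. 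As $g\in M_{n,d}(\E)$ we have $[g,g\bracketb\in M_d(\B)$, so spectral invariance yields $[g,g\bracketb^{-1}\in M_d(\B)$. The canonical dual atom is $S_g^{-1}g = \modft_g^{-1}g = g[g,g\bracketb^{-1}$, and the containment $M_{n,d}(\E)M_d(\B)\subset M_{n,d}(\E)$ then places it in $M_{n,d}(\E)$.

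For part ii), I would run the mirror-image argument on the left. Now $\La$ is discrete, so $A$ is unital and $\A$ is spectral invariant in $A$ with the same unit, whence $M_n(\A)$ is spectral invariant in $M_n(A)$. Because $\GS(g;\La)$ is an $(n,d)$-matrix Gabor Riesz sequence, \cref{Proposition: Module Riesz sequence iff Gabor Riesz sequence} gives that $\modan_g\modan_g^*$ is invertible; since $\modan_g\modan_g^* a = a\bbracket g,g]$, this is equivalent to $\bbracket g,g]$ being invertible in $M_n(A)$. With $g\in M_{n,d}(\E)$ we have $\bbracket g,g]\in M_n(\A)$, so spectral invariance gives $\bbracket g,g]^{-1}\in M_n(\A)$, and the canonical biorthogonal atom $(S_g^B)^{-1}g = \bbracket g,g]^{-1}g$ lies in $M_n(\A)M_{n,d}(\E)\subset M_{n,d}(\E)$.

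The step I expect to require the most care is the bookkeeping that matches sides: identifying the canonical dual (resp. biorthogonal) atom of the Gabor system with the module-theoretic expression $g[g,g\bracketb^{-1}$ (resp. $\bbracket g,g]^{-1}g$), and checking that it is $M_d(\B)$ for frames but $M_n(\A)$ for Riesz sequences whose spectral invariance is available under the given discreteness hypothesis. Once the sides are matched, invertibility transfers between $M_{n,d}(E)$ and its localization via \cref{Localization norm preserved} and \cref{Proposition: C*-subalgebra inverse closed}, just as in the single-generator \cref{Proposition: Canonical dual of regular is regular}, so no further computation should be needed.
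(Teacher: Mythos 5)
Your proposal is correct and follows essentially the same route as the paper: establish invertibility of $[g,g\bracketb$ in $M_d(B)$ (resp. $\bbracket g,g]$ in $M_n(A)$), invoke spectral invariance of $M_d(\B)$ in $M_d(B)$ (resp. $M_n(\A)$ in $M_n(A)$) via \cref{Proposition: Spectral invariance when discrete} and \cref{Lemma: Spectral invariance lifts to matrices}, and conclude via the explicit formulas $g[g,g\bracketb^{-1}$ and $\bbracket g,g]^{-1}g$ together with the module containments. The only cosmetic difference is that you reach invertibility through \cref{Proposition: Module frame iff Gabor frame} and \cref{Proposition: Module Riesz sequence iff Gabor Riesz sequence}, whereas the paper re-derives it directly from the localized inequalities \eqref{Equation: Localization of modular frame} and \eqref{Equation: Riesz trace inequality}; both paths rest on the same underlying facts.
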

\begin{proof}
	For the proof of i), note that the assumption that $\La$ is cocompact implies that $\Lao$ is discrete, so by \cref{Lemma: Group C*-alg unital iff discrete} we get that $M_d (B)$ is unital. Also $M_d (B)$ is a $C^*$-subalgebra of $\mathbb{B}(H_{M_{n,d}(E)})$ by \cref{Localization norm preserved}. 
	That $\GS (g; \La)$ is an $(n,d)$-matrix Gabor frame for $L^2 (G)$ then means that \eqref{Equation: Localization of modular frame} is satisfied for our current setting. We deduce, as in the proof of \cref{Proposition: Dual atom in E iff in H_E}, that $[g,g\bracketb$ is invertible in $M_d (B)$. Since $[g,g\bracketb \in M_d (\B)$ and $M_d (\B)$ is spectral invariant in $M_d (B)$ by \cref{Proposition: Spectral invariance when discrete} and \cref{Lemma: Spectral invariance lifts to matrices}, it follows that
	the canonical dual atom is $g [g,g\bracketb^{-1} \in M_{n,d}(\E)$. 
	
	For the proof of ii), note that the assumption that $\La$ is discrete implies $M_n (A)$ is unital. 
	Also, $M_n (A)$ is a $C^*$-subalgebra of $\mathbb{B}(H_{M_n (A)})$ by \cref{Localization norm preserved}.
	That $\GS (g; \La)$ determines an $(n,d)$-matrix Gabor Riesz sequence for $L^2 (G)$ then means that \eqref{Equation: Riesz trace inequality} is satisfied for our current setting. The middle term of \eqref{Equation: Riesz trace inequality} can be written $(a \bbracket g,g] , a)_A$, so $\bbracket g,g]$ extends to a positive, invertible operator on $H_{M_n (A)}$. We deduce as in the proof of \cref{Proposition: Modular Riesz sequences} that $\bbracket g,g ]$ is invertible in $M_n (A)$. Since $g \in M_{n,d}(\E)$ we have $\bbracket g,g] \in M_{n}(\A)$, and by \cref{Proposition: Spectral invariance when discrete} and \cref{Lemma: Spectral invariance lifts to matrices} $M_n (\A)$ is spectral invariant in $M_n (A)$. It follows that 
	the canonical dual atom $h:=\bbracket g,g ]^{-1} g$ is in $M_{n,d}(\E)$. 
	%
\end{proof}
\begin{rmk}
	In the special case $n=d=1$ \cref{Proposition: Dual atom of regular is regular when discrete} gives a proof of the fact that the canonical dual atom of a Gabor frame vector in Feichtinger's algebra  $S_0 (G)$ is also in Feichtinger's algebra whenever $\La$ is cocompact. 
\end{rmk}

When applying the module setup of \cref{Section: Abstract Gabor Analysis} to Gabor analysis, we take as a pre-equivalence bimodule $\E= S_0(G \times \Z_n \times \Z_d)$, which is a proper subspace of $L^2 (G \times \Z_n \times \Z_d)$ unless $G$ is a finite group. Even the Hilbert $C^*$-module completion $E$ is properly contained in $L^2 (G \times \Z_n \times \Z_d)$ for most choices of $\La$, see \cite[Example 3.8]{AuEn19}. As such, we cannot hope to treat general atoms in $L^2 (G \times \Z_n \times \Z_d)$ by applying just this method. But indeed the module reformulation is made exactly to guarantee some regularity of the atoms generating frames. 

From \cref{Definition: nd-matrix frames} we see that $(n,d)$-matrix Gabor frames generalize $n$-multiwindow $d$-super Gabor frames considered in \cite{jalu18duality}. However, we now make clear how they fit into the module framework. As mentioned earlier, we obtain $n$-multiwindow $d$-super Gabor frames if we only require reconstruction of $f \in L^2 (G \times \Z_d)$ and we identify $L^2 (G\times \Z_d)\subset L^2 (G \times \Z_n \times \Z_d)$ by embedding it along a single element in $\Z_n$. The module reformulation of this is that $g, h \in M_{n,d}(E)$ are dual $n$-multiwindow $d$-super Gabor frames if for all $f \in M_{n,d}(E)$ supported only one row we have
\begin{equation}
\label{Equation: Module reformulation of n-multiwindow d-super frame}
f = \bbracket f,g ] h = f [g,h\bracketb.
\end{equation}

Likewise, it is clear that the $(n,d)$-matrix Gabor Riesz sequences of \cref{Definition: nd-matrix Gabor Riesz sequence} generalize the $n$-multiwindow $d$-super Gabor Riesz sequences also considered in \cite{jalu18duality}. Indeed, we obtain $n$-multiwindow $d$-super Gabor Riesz sequences if we only require reconstruction of $a \in L^2 (\La \times \Z_n)$ and we identify $L^2 (\La \times \Z_n) \subset L^2 (\La \times \Z_n \times \Z_n)$ by embedding it along a single element in the middle copy of $\Z_n$. The module reformulation of this is that $g,h \in M_{n,d}(E)$ are dual $n$-multiwindow $d$-super Gabor Riesz sequences if for all $a \in M_n (A)$ supported only one row we have
\begin{equation}
\label{Equation: Module reformulation of n-multiwindow d-super Riesz}
a = \bbracket ag,h] = a \bbracket g,h].
\end{equation}
We proceed to prove that all $n$-multiwindow $d$-super Gabor frames for $L^2 (G)$ with respect to $\La$ are $(n,d)$-matrix Gabor frames for $L^2 (G)$ with respect to $\La$, as well as the analogous statement for Riesz sequences. The converse statement is true as well.
\begin{prop}
	\label{Proposition: nd frame is n-multiwindow d-super frame}
	Let $g$ be in $M_{n,d} (E)$. 
	\begin{enumerate}
		\item [i)] If $\GS(g;\La)$ is an $n$-multiwindow $d$-super Gabor frame for $L^2 (G)$ with a dual window $h \in M_{n,d}(E)$, then $\GS(g;\La)$ is an $(n,d)$-matrix Gabor frame for $L^2 (G)$ with dual window $h$.
		\item [ii)] If $\GS(g;\La)$ is an $n$-multiwindow $d$-super Gabor Riesz sequence for $L^2 (G)$ with a dual Gabor Riesz sequence $\GS(h;\La)$ with $h \in M_{n,d}(E)$, then $\GS(g;\La)$ is an $(n,d)$-matrix Gabor Riesz sequence for $L^2 (G)$ with dual Gabor Riesz sequence $\GS (h;\La)$.
	\end{enumerate}
\end{prop}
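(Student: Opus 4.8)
The plan is to exploit that both module-level reconstruction identities, \eqref{Equation: Module reformulation of n-multiwindow d-super frame} for frames and \eqref{Equation: Module reformulation of n-multiwindow d-super Riesz} for Riesz sequences, are \emph{row-local}: the matrix action appearing in each only recombines entries lying in a single row of the variable argument. Hence reconstruction on all single-row elements will extend by linearity to the full module, and the resulting identity is exactly the characterization of dual $(n,d)$-matrix Gabor frames (resp.\ Riesz sequences) supplied by \cref{Corollary: Dual Gabor frames iff extend to identity} (resp.\ \cref{Dual Gabor Riesz sequences iff extend to identity}).

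For part i), with $\La$ closed and cocompact, the hypothesis together with \eqref{Equation: Module reformulation of n-multiwindow d-super frame} gives $f = f[g,h\bracketb$ for every $f \in M_{n,d}(E)$ that is supported on a single row. Given an arbitrary $f \in M_{n,d}(E)$, I would write $f = \sum_{r \in \Z_n} f^{(r)}$, where $f^{(r)}$ coincides with $f$ on the $r$-th row and vanishes on the others; this is a finite sum. Each $f^{(r)}$ is single-row, so $f^{(r)} = f^{(r)}[g,h\bracketb$, and since the right $M_d(B)$-action is additive and leaves invariant the row on which an element is supported,
\[
f[g,h\bracketb = \sum_{r \in \Z_n} f^{(r)}[g,h\bracketb = \sum_{r \in \Z_n} f^{(r)} = f.
\]
Using $\bbracket f,g]h = f[g,h\bracketb$, this yields $f = \bbracket f,g]h$ for all $f \in M_{n,d}(E)$; equivalently $[g,h\bracketb$ acts as the identity on $M_{n,d}(E)$ and so extends to the identity on $L^2(G \times \Z_n \times \Z_d)$. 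By \cref{Corollary: Dual Gabor frames iff extend to identity}, $\GS(g;\La)$ and $\GS(h;\La)$ form a dual pair of $(n,d)$-matrix Gabor frames for $L^2(G)$, which is the claim.

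Part ii) follows the same scheme with the two inner products interchanged. Here $\La$ is discrete, so $A$, and hence $M_n(A)$, is unital. By \eqref{Equation: Module reformulation of n-multiwindow d-super Riesz} the hypothesis gives $a = a\bbracket g,h]$ for every $a \in M_n(A)$ supported on a single row. Decomposing $a = \sum_{r \in \Z_n} a^{(r)}$ into its rows and using that the left multiplication $a \mapsto a\bbracket g,h]$ is additive and preserves the supporting row of $a$, I would obtain $a = a\bbracket g,h]$ for all $a \in M_n(A)$. Taking $a = 1_{M_n(A)}$ gives $\bbracket g,h] = 1_{M_n(A)}$, so $\bbracket g,h]$ extends to the identity on $L^2(\La \times \Z_n \times \Z_n)$, and \cref{Dual Gabor Riesz sequences iff extend to identity} shows that $\GS(g;\La)$ and $\GS(h;\La)$ are a dual pair of $(n,d)$-matrix Gabor Riesz sequences for $L^2(G)$.

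I do not expect a substantial obstacle here; the content is essentially bookkeeping. The one point worth checking carefully is that the variable being decomposed sits on the side whose action is row-preserving: for frames this is the \emph{right} $M_d(B)$-action on the argument $f$, while for Riesz sequences it is the \emph{left} $M_n(A)$-multiplication on the argument $a$. Getting these two sides matched correctly is precisely what makes the row-by-row reduction legitimate in each case.
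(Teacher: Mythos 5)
Your proof is correct and follows the same route as the paper: decompose an arbitrary element into its $n$ single-row pieces, apply the single-row reconstruction identity \eqref{Equation: Module reformulation of n-multiwindow d-super frame} (resp.\ \eqref{Equation: Module reformulation of n-multiwindow d-super Riesz}) to each piece, sum, and pass to the localization. The only nitpick is terminological: in part ii) the map $a \mapsto a\bbracket g,h]$ is multiplication of $a$ on the \emph{right} by $\bbracket g,h]$ (which is indeed row-preserving), not a left multiplication; the mathematics is unaffected.
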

\begin{proof}
	If $\GS(g; \La)$ is an $n$-multiwindow $d$-super Gabor frame for $L^2 (G)$ with respect to $\La$ with a dual window $h \in M_{n,d}(E)$, we can, as noted above, reconstruct any $f \in M_{n,d}(E)$ supported on a single row. In other words,
	\begin{equation*}
	f = f [g,h\bracketb
	\end{equation*}
	for all $f \in M_{n,d}(E)$ supported on a single row. Given arbitrary $f' \in M_{n,d}(E)$ we may then just write $f'$ as a sum of $n$ matrices $f'_i$, $i=0 , \ldots ,n-1,$ with only one nonzero row, namely the kth row of $f'_i$ is given by
	\begin{equation*}
	 (f_{i,0}, \ldots , f_{i,d-1})\delta_{ik}
	\end{equation*}
	for $k \in \Z_n$,
	and we can reconstruct each of these rows. Hence we can reconstruct arbitrary elements of $M_{n,d}(E)$. This passes to the localization $L^2 (G \times \Z_n \times \Z_d)$, and thus finishes the proof of (i).
	
	The proof of (ii) is completely analogous, writing elements $a \in M_n (A)$ as a sum of matrices with only one nonzero row and then using that we can reconstruct such matrices. This will also pass to the localization $L^2 (\La \times \Z_n \times \Z_n)$.
\end{proof}
%
%

Given a closed and cocompact subgroup $\La$, we may ask if there are restrictions on $n,d\in \N$ for there to possibly exist $(n,d)$-matrix Gabor frames for $L^2 (G)$ with respect to $\La$. Conversely, if we fix $n$ and $d$, we may ask if there are restrictions on the size of the subgroup $\La$ for there to possibly exist $(n,d)$-matrix Gabor frames for $L^2 (G)$ with respect to $\La$. When $\La$ is a lattice, we have the following proposition.
\begin{prop}
	\label{Proposition: Existence of frames vs lattice size}
	Let $\La \subset G \times \widehat{G}$ be a lattice. If there is $g \in M_{n,d}(E)$ such that $\GS(g;\La)$ is an $(n,d)$-matrix Gabor frame for $L^2 (G)$, then 
	\begin{equation*}
	s(\La) \leq \frac{n}{d}.
	\end{equation*}
\end{prop}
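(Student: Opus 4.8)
The plan is to read this density bound as the concrete shadow of the abstract trace inequality in \cref{thm:req-on-n-d-for-mod-frame}, applied to the Heisenberg module $E$ with its canonical traces, and then to identify $\tr_A(1_A)$ and $\tr_B(1_B)$ with explicit invariants of $\La$. First I would note that since $\La$ is a lattice it is simultaneously discrete and cocompact, so by \cref{Lemma: Group C*-alg unital iff discrete} both $A = C^*(\La,c)$ and $B = C^*(\Lao,\overline c)$ are unital, and $(A,B,E,\tr_B)$ is a left Gabor bimodule with $A$ unital. By \cref{Proposition: Module frame iff Gabor frame} the assumption that $\GS(g;\La)$ is an $(n,d)$-matrix Gabor frame for $L^2(G)$ is equivalent to $\modft_g$ being invertible on $M_{n,d}(E)$, so we land exactly in the hypotheses of \cref{thm:req-on-n-d-for-mod-frame}. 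That theorem then delivers
\[
d\,\tr_B(1_B) \le n\,\tr_A(1_A),
\]
where $\tr_A,\tr_B$ are the canonical traces related by \eqref{Equation: Trace relation}, as checked in \eqref{Equation: Localization inner product}.

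The substance of the proof is then the evaluation of these two traces. Because $\La$ is discrete I would use counting measure on $\La$ (the normalization under which $S_0(\La,c)=\ell^1(\La,c)$), so that the unit $1_A = \pi(0)$ has coefficient function $\delta_0$ and hence $\tr_A(1_A)=1$; this is exactly the tracial-state normalization of the standing Convention. For $B$ the point is that the Haar measure on $\Lao$ is not free: it is pinned down by Plancherel duality against $(G\times\widehat G)/\La\cong\widehat{\Lao}$. Since the compact quotient $(G\times\widehat G)/\La$ has total mass $s(\La)$, its Plancherel-dual measure on $\Lao$ is $s(\La)^{-1}$ times counting measure, so that writing $1_B=\pi(0)$ in its integrated form $\int_{\Lao} b(\lao)\pi(\lao)^*\,d\lao$ forces the coefficient function $b$ to take the value $s(\La)$ at the identity. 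Hence $\tr_B(1_B)=s(\La)$, and substituting the two trace values into $d\,\tr_B(1_B)\le n\,\tr_A(1_A)$ yields $d\,s(\La)\le n$, i.e. $s(\La)\le n/d$.

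I expect the main obstacle to be precisely this bookkeeping of Haar measures: one must keep straight that counting measure on $\La$ gives $\tr_A(1_A)=1$, while the Plancherel normalization on $\Lao$ dictated by the mass $s(\La)$ of the quotient is exactly what injects the factor $s(\La)$ into $\tr_B(1_B)$. (One should double-check that the ratio $\tr_B(1_B)/\tr_A(1_A)$ really reproduces $s(\La)$ rather than the raw covolume, which is where the choice of counting measure on $\La$ is essential.) Everything else is a direct appeal to \cref{thm:req-on-n-d-for-mod-frame} and \cref{Proposition: Module frame iff Gabor frame}; a reassuring sanity check is the classical case $G=\R$, $n=d=1$, $\La=\alpha\Z\times\beta\Z$, where $s(\La)=\alpha\beta$ and the bound recovers the familiar necessary condition $\alpha\beta\le 1$ for a Gabor frame.
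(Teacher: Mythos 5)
Your proposal is correct and follows essentially the same route as the paper: reduce to the module statement via \cref{Proposition: Module frame iff Gabor frame} (the paper invokes \cref{Proposition: Dual atom of regular is regular when discrete} to the same end), apply \cref{thm:req-on-n-d-for-mod-frame}, and evaluate $\tr_A(1_A)=1$ and $\tr_B(1_B)=s(\La)$ using that $1_B = s(\La)\delta_0$ under the Plancherel-dual normalization on $\Lao$. Your extra care about the Haar measure bookkeeping is exactly the point the paper compresses into its parenthetical citation of Rieffel.
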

\begin{proof}
	Since $\La$ is discrete and cocompact, both $A$ and $B$ are unital. We also know by \cref{Proposition: Dual atom of regular is regular when discrete} that the canonical dual of $g$ is in $M_{n,d}(E)$. Hence we are in the setting of \cref{thm:req-on-n-d-for-mod-frame}. Since module $(n,d)$-matrix frames localize to $(n,d)$-matrix Gabor frames for the localization, and we have $\tr_{A} (1_{A} ) =1$, and $\tr_{B} (1_{B}) = s(\La)$ (since the identity on $B$ is $s(\La)\delta_0$, where $\delta_0$ is the indicator function in the group identity, see for example \cite{ri88}), the result is immediate by \cref{thm:req-on-n-d-for-mod-frame}.
\end{proof}
Likewise, given a lattice $\La$, we may ask if there is a relationship between the size of $\La$ and the integers $n$ and $d$ such that there can possibly exist $(n,d)$-matrix Gabor Riesz sequences for $L^2 (G)$ with respect to $\La$. This is the content of the following proposition.
\begin{prop}
	\label{Proposition: Existence of Riesz vs lattice size}
	Let $\La \subset G \times \widehat{G}$ be a lattice. If $g \in M_{n,d} (E)$ is such that $\GS(g;\La)$ is an $(n,d)$-matrix Gabor Riesz sequence for $L^2 (G)$, then
	\begin{equation*}
	s (\La) \geq \frac{n}{d}.
	\end{equation*}
\end{prop}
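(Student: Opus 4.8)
The plan is to run the argument of \cref{Proposition: Existence of frames vs lattice size} in its dual form, replacing the frame inequality \cref{thm:req-on-n-d-for-mod-frame} by its Riesz-sequence counterpart \cref{Theorem: Requirements for module Riesz}. First, since $\La$ is a lattice it is both discrete and cocompact, so $\La^{\circ}$ is discrete as well. Applying \cref{Lemma: Group C*-alg unital iff discrete} to both $\La$ and $\La^{\circ}$ shows that $A$ and $B$ are unital, and that the canonical evaluation traces $\tr_A$ and $\tr_B$ are defined on all of $A$ and $B$. In particular $(A,B,E,\tr_A)$ is a right Gabor bimodule in which $A$ is the unital algebra carrying the faithful finite trace $\tr_A$, and $B$ is unital.

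Next I would translate the Gabor hypothesis into a module statement. By \cref{Definition: nd-matrix Gabor Riesz sequence}, the assertion that $\GS(g;\La)$ is an $(n,d)$-matrix Gabor Riesz sequence means precisely that $C_g C_g^*$ is an isomorphism of $L^2(\La \times \Z_n \times \Z_n)$. Since $\La$ is discrete, \cref{Proposition: Module Riesz sequence iff Gabor Riesz sequence} applies and shows this is equivalent to $\modan_g \modan_g^* : M_n(A) \to M_n(A)$ being an isomorphism, i.e.\ that $g$ generates a module $(n,d)$-matrix Riesz sequence for $E$ with respect to $A$. This places us exactly in the hypotheses of \cref{Theorem: Requirements for module Riesz}, which yields
\[
d\,\tr_B(1_B) \geq n\,\tr_A(1_A).
\]

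Finally I would insert the explicit trace values, just as in the proof of \cref{Proposition: Existence of frames vs lattice size}: in the Gabor setting $\tr_A(1_A) = 1$ (as $1_A = \delta_0$ and $\tr_A$ is evaluation at the group identity), while $1_B = s(\La)\delta_0$ by the lattice computation of \cite{ri88}, so that $\tr_B(1_B) = s(\La)$. Substituting gives $d\,s(\La) \geq n$, which is the desired inequality $s(\La) \geq n/d$. The only points requiring genuine care are the same two as in the frame case, namely confirming that we really land in the right Gabor bimodule hypotheses of \cref{Theorem: Requirements for module Riesz} so that $A$ is the unital algebra whose trace is normalized to a state, and recording the correct values of $\tr_A(1_A)$ and $\tr_B(1_B)$ in the twisted group $C^*$-algebra picture; everything else is a direct dualization of the preceding proposition and involves no new computation.
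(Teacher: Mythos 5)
Your proof is correct and follows essentially the same route as the paper: pass from the Gabor Riesz hypothesis to invertibility of $\modan_g\modan_g^*$ on $M_n(A)$ (the paper phrases this via localization, you via \cref{Proposition: Module Riesz sequence iff Gabor Riesz sequence}, which is the same equivalence), then apply \cref{Theorem: Requirements for module Riesz} with $\tr_A(1_A)=1$ and $\tr_B(1_B)=s(\La)$.
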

\begin{proof}
	As before we know by the conditions on $\La$ that both $A$ and $B$ are unital, and by \cref{Proposition: Dual atom of regular is regular when discrete} the canonical dual of $g$ is in $M_{n,d}(E)$. Thus we are in the setting of \cref{Theorem: Requirements for module Riesz}. Since module $(n,d)$-matrix Riesz sequences localize to $(n,d)$-matrix Gabor Riesz sequences for the localization, and $\tr_A (1_A) = 1$ and $\tr_B (1_B)= s(\La)$ (once again since the identity on $B$ is $s(\La)\delta_0$), the result is immediate by \cref{Theorem: Requirements for module Riesz}.
\end{proof}
\begin{rmk}
	 The two preceding propositions contain statements known as density theorems in Gabor analysis. This is due to the fact that they give conditions on the density of a lattice for there to possibly exist Gabor frames.
\end{rmk}
Now let $\La \subset G \times \widehat{G}$ be cocompact again. In this framework two of the cornerstones of Gabor analysis, namely the Wexler-Raz biorthogonality relations and the duality principle for Gabor frames, are then quite easy to prove for $(n,d)$-matrix Gabor frames for $L^2 (G)$ with respect to $\La$ with atoms in $M_{n,d}(E)$. 
\begin{prop}[Wexler-Raz Biorthogonality Relations]
	\label{Proposition: Wexler-Raz for (n,d)-matrix frames}
	Let $\La \subset G \times \widehat{G}$ be a closed and cocompact subgroup, and let $g, h \in M_{n,d}(E)$. Then the following are equivalent:
	\begin{enumerate}
		\item [i)] $\GS (g;\La)$ and $\GS(h;\La)$ are dual $(n,d)$-matrix Gabor frames for $L^2 (G)$.
		\item [ii)] $ \langle g, \pi (\la^{\circ}) h \rangle_{\ell^2 (\Lao \times \Z_n \times \Z_d)} = s(\La) \cdot \delta_{0, \la^{\circ}}$. 
	\end{enumerate}
\end{prop}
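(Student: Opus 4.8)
The plan is to route the whole equivalence through the single algebraic identity $[g,h\bracketb = 1_{M_d(B)}$ and then to read off the concrete biorthogonality relations by passing to the function description of $B$.

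Since $\La$ is closed and cocompact, $\Lao$ is discrete, so $B = C^*(\Lao,\overline c)$ and hence $M_d(B)$ are unital by \cref{Lemma: Group C*-alg unital iff discrete}. By \cref{Corollary: Dual Gabor frames iff extend to identity}, statement (i) is equivalent to $[g,h\bracketb$ extending to the identity operator on $L^2(G\times\Z_n\times\Z_d)$; moreover the argument there shows that this is in turn equivalent to the algebraic identity $[g,h\bracketb = 1_{M_d(B)}$ in $M_d(B)$, the localization map $M_d(B)\to\mathbb B(H_{M_{n,d}(E)})$ being a faithful $*$-homomorphism by \cref{prop:bimodule_localization}. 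So the proposition reduces to translating this matrix identity into (ii). The next step is to unpack it entrywise: by definition of the $M_d(B)$-valued inner product the $(i,j)$ entry of $[g,h\bracketb$ is $\sum_{k\in\Z_n}\langle g_{k,i},h_{k,j}\brangle\in B$, while the $(i,j)$ entry of $1_{M_d(B)}$ is $\delta_{ij}1_B$. Hence (i) is equivalent to the family of equalities $\sum_{k\in\Z_n}\langle g_{k,i},h_{k,j}\brangle = \delta_{ij}1_B$ in $B$, for all $i,j\in\Z_d$; and since each $g_{k,i},h_{k,j}\in\E=S_0(G)$, these inner products lie in $\B=S_0(\Lao,\overline c)=\ell^1(\Lao,\overline c)$, so they are equalities of honest functions on the discrete group $\Lao$.

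To finish I would pass to the function picture. Using the localization computation that identifies $H_B$ with $\ell^2(\Lao)$, the value of $\langle g_{k,i},h_{k,j}\brangle$ at $\lao\in\Lao$ equals the $L^2(G)$ pairing $\langle g_{k,i},\pi(\lao)h_{k,j}\rangle$ up to a unimodular factor coming from $\pi(\lao)^*=c(\lao,\lao)\pi(-\lao)$ and the fixed Haar normalization on $\Lao$; on the right, $1_B$ is the function $s(\La)\delta_0$ (the unit of $C^*(\Lao,\overline c)$ is $s(\La)\delta_0$, as recorded in the proof of \cref{Proposition: Existence of frames vs lattice size}). Evaluating the entrywise equalities at each $\lao$ and collecting the $\Z_n$-sum together with the two $\Z_d$ indices into the matrix-valued pairing on $\ell^2(\Lao\times\Z_n\times\Z_d)$ yields exactly $\langle g,\pi(\lao)h\rangle = s(\La)\delta_{0,\lao}$, read as a $d\times d$-matrix identity with the $\Z_n$-components and $G$ integrated out and the right-hand side carrying an implicit identity matrix. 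Because every link in this chain is an "if and only if", both directions of the proposition are established simultaneously.

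The main obstacle is the bookkeeping in this last translation: pinning down the Haar normalization on $\Lao$ that produces exactly the constant $s(\La)$, tracking the cocycle factor relating $\pi(\lao)^*$ and $\pi(-\lao)$, and keeping the matrix index conventions straight so that both the diagonal value $\delta_{ij}s(\La)$ at $\lao=0$ and the vanishing of all entries for $\lao\neq 0$ are correctly captured. Conveniently, the stray cocycle factors are unimodular and trivial at $\lao=0$, so they change neither the value at the identity nor whether an entry vanishes; hence they do not obstruct the equivalence, and the only substantive point is fixing the normalizing constant to be precisely $s(\La)$.
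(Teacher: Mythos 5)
Your proof is correct and takes essentially the same route as the paper's: both reduce statements (i) and (ii) to the single identity $[g,h\bracketb = [h,g\bracketb = 1_{M_d(B)}$, which is precisely the paper's one-line argument. You merely supply the details the paper leaves implicit --- the reduction of (i) via \cref{Corollary: Dual Gabor frames iff extend to identity} and injectivity of the localization, the entrywise unpacking, and the identification of $1_B$ with $s(\La)\delta_0$ --- and your reading of (ii) as a $d\times d$ matrix identity is the interpretation under which the equivalence actually holds.
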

\begin{proof}
	As $\La$ is cocompact we know $\La^{\circ}$ is discrete, so $M_d (B)$ is unital. Knowing this, we can see that both the above statements are equivalent to the statement $[ g,h\bracketb = [h,g\bracketb = 1_{M_d (B)}$. 
\end{proof}

\begin{thm}[Duality principle] 
	\label{Theorem: Duality for (n,d)-matrix frames}
	Let $\La \subset \tfp{G}$ be a closed and cocompact subgroup, and let $g \in M_{n,d}(E)$. Then the following are equivalent.
	\begin{enumerate}
		\item [i)] $\GS (g;\La)$ is an $(n,d)$-matrix Gabor frame for $L^2 (G)$.  
		\item [ii)] $\GS(g;\Lao)$ is a $(d,n)$-matrix Gabor Riesz sequence for $L^2 (G)$.
	\end{enumerate}
\end{thm}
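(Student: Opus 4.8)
The plan is to translate statement (i) into an invertibility statement about the module frame operator on $M_{n,d}(E)$, pass through the Module Duality Principle to an invertibility statement on the $M_d(B)$-side, and then localize that statement to recover (ii). First I would record that $\La$ cocompact forces $\Lao$ discrete, so $B = C^*(\Lao,\bar c)$ is unital (by \cref{Lemma: Group C*-alg unital iff discrete}) and $M_d(B)$ is unital with its faithful trace (by \cref{Lemma: Induced trace positive}). By \cref{Proposition: Module frame iff Gabor frame}, statement (i) is equivalent to the module frame operator $\modft_g : M_{n,d}(E) \to M_{n,d}(E)$ being invertible, i.e.\ to $g$ generating a single module frame for the $M_n(A)$-$M_d(B)$-equivalence bimodule $M_{n,d}(E)$.

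Next, since all results of \cref{Subsection: The Single Generator Case} lift to the matrix setting by \cref{Remark: All single generator results lift}, the Module Duality Principle \cref{Proposition: Modular Ron-Shen Duality} applies verbatim to $M_{n,d}(E)$: the operator $\modft_g$ is invertible if and only if $\modan_g^{M_d(B)}(\modan_g^{M_d(B)})^* : M_d(B) \to M_d(B)$ is an isomorphism, where $\modan_g^{M_d(B)} f = [g,f\bracketb$ and $(\modan_g^{M_d(B)})^* b = g\cdot b$. Both conditions are in fact equivalent to $[g,g\bracketb$ being invertible in $M_d(B)$, since $\modft_g f = f[g,g\bracketb$ and $\modan_g^{M_d(B)}(\modan_g^{M_d(B)})^* b = [g,g\bracketb\, b$. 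By \cref{Definition: Modular Riesz sequence} this last condition is precisely the statement that $g$ generates a module Riesz sequence for $M_{n,d}(E)$ with respect to $M_d(B)$.

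Finally I would read the $M_d(B)$-side Riesz condition back into Gabor analysis. The Heisenberg module construction is symmetric under interchanging $\La \leftrightarrow \Lao$, $A \leftrightarrow B$, and the left and right inner products; viewing $E$ as the Heisenberg module for the discrete subgroup $\Lao$ with unital traced algebra $B$, the localization is again $L^2(G)$ and the localized $M_d(B)$-coefficient operator is the Gabor coefficient operator $C_g$ associated with $\GS(g;\Lao)$. Hence the mirror of \cref{Proposition: Module Riesz sequence iff Gabor Riesz sequence}, applied now with $\Lao$ discrete, shows that $g$ generates a module Riesz sequence with respect to $M_d(B)$ if and only if $\GS(g;\Lao)$ is a $(d,n)$-matrix Gabor Riesz sequence for $L^2(G)$, which is statement (ii). Chaining the three equivalences gives (i) $\iff$ (ii).

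The hard part will be making this last step fully precise. One must verify that the mirror versions of the localization lemmas \cref{Lemma: Coefficient operator commutation} and \cref{Lemma: Synthesis operator commutation} hold on the $\Lao$-side — these follow by the identical arguments after substituting $B$ for $A$ and $\Lao$ for $\La$ — so that \cref{Proposition: Module Riesz sequence iff Gabor Riesz sequence} can legitimately be invoked for $B$. One must also be careful with the index bookkeeping: the left $M_n(A)$-module picture produces objects over $\La \times \Z_n \times \Z_n$, whereas the $M_d(B)$-side produces objects over $\Lao \times \Z_d \times \Z_d$, so that $g \in M_{n,d}(E)$ is effectively read as the transpose $g^T \in M_{d,n}(E)$ when forming the $(d,n)$-matrix system $\GS(g;\Lao)$. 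Once this role swap is set up carefully, the rest is routine.
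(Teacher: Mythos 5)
Your proposal is correct and follows essentially the same route as the paper: both arguments reduce statement (i) and statement (ii) to the single condition that $[g,g\bracketb$ is invertible in $M_d(B)$, using \cref{Proposition: Module frame iff Gabor frame} on one side and the $M_d(B)$-mirror of \cref{Proposition: Module Riesz sequence iff Gabor Riesz sequence} on the other. The paper's proof is just a terser version of yours; your explicit detour through the Module Duality Principle and your remarks on the $\La\leftrightarrow\Lao$ role swap and index transposition merely spell out what the paper leaves implicit.
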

\begin{proof}
	Statement i) can be seen to be equivalent to $[g,g\bracketb$ being invertible by \cref{Proposition: Module frame iff Gabor frame}. But statement ii) is also equivalent to $[g,g\bracketb$ being invertible by \cref{Proposition: Module Riesz sequence iff Gabor Riesz sequence} since we consider $\GS(g;\Lao)$, that is, we work over $M_d (B)$. This finishes the proof.
\end{proof}

For completeness we also include the following result related to the duality principle. This is a strengthening of the corresponding result in \cite{jalu18duality}.
\begin{prop}
	Let $\La \subset \tfp{G}$ be closed and cocompact, and let $g,h \in M_{n,d}(E)$ be such that $[g,h\bracketb$ extends to the identity operator on $L^2 (G \times \Z_n \times \Z_d)$. Then $\bbracket g,h ]$ is an idempotent operator from $L^2 (G \times \Z_n \times \Z_d)$ onto $\overline{\linspan} \{ \bigoplus_{i \in \Z_n} \bigoplus_{j \in \Z_d} \pi(\lao) g_{i,j}\}$.
\end{prop}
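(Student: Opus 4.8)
The plan is to rewrite the hypothesis as an algebraic identity in $M_d(B)$, establish idempotency on the dense submodule $M_{n,d}(E)$ and extend it by continuity, and finally identify the range with the closed span of the relevant time-frequency shifts. First I would invoke \cref{Corollary: Dual Gabor frames iff extend to identity}: the assumption that $[g,h\bracketb$ extends to the identity on $L^2(G\times\Z_n\times\Z_d)$ says precisely that $g$ and $h$ generate dual $(n,d)$-matrix Gabor frames. Since $\La$ is cocompact, $\Lao$ is discrete and $M_d(B)$ is unital, and the hypothesis is then equivalent to $[g,h\bracketb=1_{M_d(B)}$, whence also $[h,g\bracketb=[g,h\bracketb^{*}=1_{M_d(B)}$. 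Throughout I regard $\bbracket g,h]\in M_n(A)$ as the operator $w\mapsto \bbracket g,h]\,w$ on $L^2(G\times\Z_n\times\Z_d)$ obtained by localizing the left action; because $g,h\in M_{n,d}(E)$ are Bessel vectors by \cref{prop: Vector in completion is Bessel}, this operator is bounded.

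For idempotency I would work on the dense subspace $M_{n,d}(E)$ and use the associativity identity $\bbracket g,h]\,g=g[h,g\bracketb$ together with the bimodule relation $a(fb)=(af)b$. For $f\in M_{n,d}(E)$,
\[
\bbracket g,h]\bigl(\bbracket g,h]f\bigr)=\bbracket g,h]\bigl(g[h,f\bracketb\bigr)=\bigl(\bbracket g,h]\,g\bigr)[h,f\bracketb=\bigl(g[h,g\bracketb\bigr)[h,f\bracketb=g[h,f\bracketb=\bbracket g,h]f,
\]
using $[h,g\bracketb=1_{M_d(B)}$ in the penultimate step. Thus $\bbracket g,h]^2=\bbracket g,h]$ holds on a dense subspace and extends by continuity to all of $L^2(G\times\Z_n\times\Z_d)$; alternatively one may quote \cref{Proposition: Identity induce idempotent} for idempotency in $M_n(A)$ and \cref{Localization norm preserved} to transport it to the localization.

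It remains to compute the range. Being a bounded idempotent, $\bbracket g,h]$ has closed range equal to its fixed-point set. For any $b\in M_d(B)$ one has $\bbracket g,h](gb)=\bigl(g[h,g\bracketb\bigr)b=gb$, so $g\,M_d(B)$ lies in the range and hence $\overline{g\,M_d(B)}\subseteq\range\bbracket g,h]$; conversely $\bbracket g,h]f=g[h,f\bracketb\in g\,M_d(B)$ on the dense submodule, giving $\range\bbracket g,h]\subseteq\overline{g\,M_d(B)}$, so the range is exactly $\overline{g\,M_d(B)}$ inside the localization. To match this with the set in the statement I would expand, using $f\cdot b=\sum_{\lao\in\Lao}b(\lao)\pi(\lao)^{*}f$ (a sum since $\Lao$ is discrete),
\[
(gb)_{i,j}=\sum_{k\in\Z_d}\sum_{\lao\in\Lao}b_{k,j}(\lao)\,\pi(\lao)^{*}g_{i,k},
\]
which exhibits $g\,M_d(B)$ as the linear span of the time-frequency shifts $\pi(\lao)^{*}g_{i,k}$; since $\pi(\lao)^{*}=c(\lao,\lao)\pi(-\lao)$ and $-\Lao=\Lao$, their closed span is precisely $\overline{\linspan}\{\bigoplus_{i\in\Z_n}\bigoplus_{j\in\Z_d}\pi(\lao)g_{i,j}\}$.

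The algebra is immediate once the hypothesis has been read as $[g,h\bracketb=[h,g\bracketb=1_{M_d(B)}$, so I expect the real work to lie in the analytic passage to the localization: verifying that $\bbracket g,h]$ is bounded on $L^2(G\times\Z_n\times\Z_d)$, that the identities valid on the dense submodule $M_{n,d}(E)$ survive continuous extension, and that the localized range $\overline{g\,M_d(B)}$ is correctly matched with the closed linear span of the shifted component functions $g_{i,j}$ appearing in the statement.
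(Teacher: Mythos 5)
Your proposal is correct and follows essentially the same route as the paper: read the hypothesis as $[g,h\bracketb = [h,g\bracketb = 1_{M_d(B)}$, obtain idempotency via \cref{Proposition: Identity induce idempotent} (your inline computation is exactly its proof), identify the range with $\overline{g\,M_d(B)}$ (the paper cites \cref{Proposition: $E$ to $hB$ duality} for this, whose content you reprove directly), and pass to the localization. Your explicit expansion of the $M_d(B)$-action to match $\overline{g\,M_d(B)}$ with the closed span of time-frequency shifts is a detail the paper merely asserts, so it is a welcome addition rather than a deviation.
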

\begin{proof}
	Since $[g,h\bracketb$ extends to the identity operator, 
	we have $[g,h\bracketb = [h,g\bracketb = 1_{M_d (B)}$. That $\bbracket g,h]$ is an idempotent then follows by \cref{Proposition: Identity induce idempotent}.
	By \cref{Proposition: $E$ to $hB$ duality} we get that $\bbracket g,h ]$ is an idempotent from $M_{n,d} (E)$ onto $\overline{g M_d (B)}$. But this passes to the localization, and the localization of $\overline{g M_d (B)}$ is $$\overline{\linspan} \{ \bigoplus_{i \in \Z_n} \bigoplus_{j \in \Z_d} \pi(\lao) g_{i,j}\}\subset L^2 (G \times \Z_n \times \Z_d).$$
\end{proof}
Lastly in this section, we prove that whenever $\La$ is cocompact, there is a close relationship between the module frame bounds and the Gabor frame bounds in the localization. 
\begin{prop}
	\label{Proposition: Modular frame and Gabor frame same bounds}
	Let $\La \subset G \times \widehat{G}$ be a closed and cocompact subgroup. Then $g \in M_{n,d}(E)$ generates a module $(n,d)$-matrix frame for $E$ as an $A$-module with lower frame bound $C$ and upper frame bound $D$ if and only if $\GS (g;\La)$ is an $(n,d)$-matrix Gabor frame for $L^2 (G)$ with lower frame bound $C$ and upper frame bound $D$.
\end{prop}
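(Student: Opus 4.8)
The plan is to rephrase each of the two frame-bound conditions as an inequality between the relevant frame operator and scalar multiples of the identity, and then transport one into the other along the isometric localization map. Write $H = H_{M_{n,d}(E)}$, which under the identifications of this section is $L^2(G\times\Z_n\times\Z_d)$, and recall from \cref{Proposition: Module frame localizes to Gabor frame} that $S_g$ is exactly the continuous extension $\rho_{M_{n,d}(E)}(\modft_g)$ of the module frame operator. By \cref{prop:bimodule_localization}(ii), applied to the $M_n(A)$--$M_d(B)$-equivalence bimodule $M_{n,d}(E)$, the induced map $\rho_{M_{n,d}(E)}\colon \End_{M_n(A)}(M_{n,d}(E))\to \mathbb{B}(H)$ is an isometric $*$-homomorphism; being isometric it is unital, injective, and hence both order-preserving and order-reflecting. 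On the Gabor side, since $S_g=C_g^*C_g$ one has $\langle S_g f,f\rangle=\Vert C_g f\Vert^2$, so the statement that $\GS(g;\La)$ is an $(n,d)$-matrix Gabor frame with lower bound $C$ and upper bound $D$ is, by definition, precisely the operator inequality $C\,\Id\le S_g\le D\,\Id$ in $\mathbb{B}(H)$.

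For the module side I would establish that the frame-bound inequality $C\,\bbracket f,f]\le \bbracket \modft_g f,f]\le D\,\bbracket f,f]$ for all $f\in M_{n,d}(E)$ is equivalent to $C\,\Id\le \modft_g\le D\,\Id$ in $\End_{M_n(A)}(M_{n,d}(E))$. One implication is formal: if $\modft_g-C\,\Id\ge 0$, write it as $\Xi^*\Xi$ and compute $\bbracket(\modft_g-C\,\Id)f,f]=\bbracket\Xi f,\Xi f]\ge 0$, and symmetrically for the upper bound. For the reverse implication I would apply the faithful trace $\tr_{M_n(A)}$, which is finite on $\bbracket M_{n,d}(E),M_{n,d}(E)]$, to the assumed inequality; this gives $C\,\tr_{M_n(A)}(\bbracket f,f])\le \tr_{M_n(A)}(\bbracket \modft_g f,f])\le D\,\tr_{M_n(A)}(\bbracket f,f])$ for all $f$ in the dense subspace $M_{n,d}(E)\subset H$. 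Since every $g\in M_{n,d}(E)$ is a Bessel vector by \cref{prop: Vector in completion is Bessel}, the bounded operator $S_g=\rho_{M_{n,d}(E)}(\modft_g)$ lets me extend these scalar inequalities by continuity to $C\,\Id\le S_g\le D\,\Id$ on all of $H$, and then the order-reflecting property of the isometric homomorphism $\rho_{M_{n,d}(E)}$ pulls this back to $C\,\Id\le \modft_g\le D\,\Id$.

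It then remains only to knit the two reformulations together: because $\rho_{M_{n,d}(E)}$ is a unital isometric $*$-homomorphism sending $\modft_g$ to $S_g$, the relation $C\,\Id\le\modft_g\le D\,\Id$ in $\End_{M_n(A)}(M_{n,d}(E))$ holds if and only if $C\,\Id\le S_g\le D\,\Id$ holds in $\mathbb{B}(H)$. The rescaling between the localization inner product and the standard $L^2$-inner product on $H$ is irrelevant here, since proportional inner products induce the same adjoint operation and the same positive cone on $\mathbb{B}(H)$. Chaining the equivalences (module bounds $C,D$ $\Leftrightarrow$ $C\,\Id\le\modft_g\le D\,\Id$ $\Leftrightarrow$ $C\,\Id\le S_g\le D\,\Id$ $\Leftrightarrow$ Gabor bounds $C,D$) yields the claim. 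I expect the main obstacle to be the reverse implication in the second paragraph, namely passing from the $M_n(A)$-valued frame inequality to genuine operator positivity of $\modft_g$: this step is false for general adjointable operators, and it is precisely the faithful trace together with the isometric representation on the localization $H$ that rescues it, while everything else is either formal or standard Hilbert-space frame theory.
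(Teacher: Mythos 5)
Your proof is correct, but it is organized differently from the paper's. The paper reduces the statement to a comparison of \emph{optimal} frame bounds: by \cref{Lemma: Optimal frame bounds for modular frame} the optimal module bounds are $\Vert\modft_g\Vert$ and $\Vert\modft_g^{-1}\Vert^{-1}$, by standard Hilbert-space frame theory the optimal Gabor bounds are $\Vert S_g\Vert$ and $\Vert S_g^{-1}\Vert^{-1}$, and the isometry of $\rho_{M_{n,d}(E)}$ (via \cref{Localization norm preserved} and \cref{prop:bimodule_localization}) forces these to coincide. You instead transport the two-sided operator inequality $C\,\Id\le\modft_g\le D\,\Id$ directly along the isometric $*$-homomorphism, using that an injective $*$-homomorphism of $C^*$-algebras preserves and reflects the order; this buys you a statement about \emph{all} admissible pairs $(C,D)$ in one step rather than passing through the extremal ones, and it avoids invoking \cref{Lemma: Optimal frame bounds for modular frame} on either side. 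Both routes ultimately rest on the same two pillars (frame bounds $=$ membership of $\modft_g$ in the order interval $[C\,\Id,D\,\Id]$, and isometry of the localization), so the difference is one of packaging rather than substance. One remark on your closing sentence: the implication from $\bbracket Tf,f]\ge 0$ for all $f$ to $T\ge 0$ in $\End_{M_n(A)}(M_{n,d}(E))$ is \emph{not} false for adjointable operators --- it is a standard fact for Hilbert $C^*$-modules (Lance, Lemma 4.1), and the paper tacitly uses it in \cref{Lemma: Modular frame bounds reciprocal}; your detour through the faithful trace and the localization is a valid substitute, but it is not needed to rescue the argument.
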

\begin{proof}
	By \cref{Lemma: Optimal frame bounds for modular frame} it suffices to prove that the optimal frame bounds are equal for both the module frame and the Gabor frame. We know that the localization of a module $(n,d)$-matrix frame for $E$ as an $A$-module becomes an $(n,d)$-matrix Gabor frame for $L^2 (G)$ with respect to $\La$. Since $\La$ is cocompact, we also know that if $g \in M_{n,d}(E)$ is such that $\GS(g;\La)$ is an $(n,d)$-matrix Gabor frame for $L^2(G)$, then the canonical dual $S_g^{-1} g \in M_{n,d}(E)$ also. By \cref{Proposition: Module frame iff Gabor frame} we have $\rho (\modft_g) = S_g$. From standard Hilbert space frame theory we know that the optimal upper frame bound for $S_g$ is $\Vert S_g \Vert$, and the optimal lower frame bound for $S_g$ is $\Vert S_g^{-1} \Vert^{-1}$, see for example Section 5.1 of \cite{gr01}. We know by \cref{Localization norm preserved} that $\Vert \modft_g \Vert = \Vert \rho (\modft_g)\Vert = \Vert S_g \Vert$ and $\Vert \modft_g^{-1} \Vert = \Vert \rho (\modft_g^{-1})\Vert = \Vert S_g^{-1} \Vert$. The result then follows by \cref{Lemma: Optimal frame bounds for modular frame}.
\end{proof}
\begin{rmk}
	\label{Remark: Norm of frame operator with inner products}
	A straightforward calculation will show that $\Vert \modft_g \Vert = \Vert \bbracket g,g ] \Vert$. Indeed, as $\modft_g x = x [ g,g\bracketb$ and $[ g,g\bracketb$ is positive, we see that
	\begin{equation*}
	\Vert \modft_g \Vert = \sup_{\Vert f \Vert=1} \{ \Vert \bbracket f [ g,g\bracketb, f] \Vert \}, 
	\end{equation*}
	and it follows immediately that $$\Vert \modft_g \Vert \leq \Vert [ g,g \bracketb \Vert = \Vert \bbracket g,g ] \Vert.$$ Inserting $f = \Vert \bbracket g,g ] \Vert^{-1/2} g $ we obtain the equality. Note that this is the same upper bound we obtained in \cref{Proposition: Dual atom in E iff in H_E}.
\end{rmk}
\begin{corl}
	Let $g \in M_{n,d}(E)$ and let $\La \subset \tfp{G}$ be a closed and cocompact subgroup. Then $g$ has the same Bessel bound both as an $(n,d)$-matrix Gabor atom for $L^2(G)$ with respect to $\La$ and as a $(d,n)$-matrix Gabor atom for $L^2(G)$ with respect to $\Lao$.
\end{corl}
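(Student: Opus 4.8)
The plan is to identify each of the two Bessel bounds with the norm of one of the two matrix-valued inner products of $g$, and then to invoke the fact that these two inner products induce the same norm on an equivalence bimodule. Recall that the optimal Bessel bound of a Gabor atom is precisely the squared norm of its coefficient operator. For the $\La$-side, $g \in M_{n,d}(E)$ is a Bessel vector by \cref{prop: Vector in completion is Bessel}, and its optimal Bessel bound as an $(n,d)$-matrix Gabor atom with respect to $\La$ is $\Vert C_g \Vert^2 = \Vert C_g^* C_g \Vert = \Vert S_g \Vert$. By \cref{Proposition: Module frame localizes to Gabor frame} (taking $h=g$), $S_g$ is the localization of the module frame operator $\modft_g$, and since the localization map $\End_{M_n(A)}(M_{n,d}(E)) \to \mathbb{B}(H_{M_{n,d}(E)})$ is isometric by \cref{prop:bimodule_localization}, we get $\Vert S_g \Vert = \Vert \modft_g \Vert$. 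Finally, \cref{Remark: Norm of frame operator with inner products} identifies this with $\Vert \bbracket g,g] \Vert = \Vert [g,g\bracketb \Vert$.

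For the $\Lao$-side I would pass to the right $M_d(B)$-module structure on $M_{n,d}(E)$. Viewing $g$ as a $(d,n)$-matrix Gabor atom with respect to $\Lao$, the relevant coefficient operator $C_g^{\Lao} \colon H_{M_{n,d}(E)} \to H_{M_d(B)}$ is the localization of the module $B$-coefficient operator $\modan_g^B \colon f \mapsto [g,f\bracketb$, up to a transposition of the two finite indices which leaves operator norms unchanged. Its optimal Bessel bound is $\Vert C_g^{\Lao} \Vert^2 = \Vert (C_g^{\Lao})^* C_g^{\Lao} \Vert$, and $(C_g^{\Lao})^* C_g^{\Lao}$ is the localization of the module operator $\modft_g^B = (\modan_g^B)^* \modan_g^B$, which acts by $\modft_g^B f = \bbracket g,g] f$. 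Applying the isometry of \cref{prop:bimodule_localization} to the right $M_d(B)$-structure (the localization Hilbert space is unchanged, so the statement transfers verbatim) gives $\Vert (C_g^{\Lao})^* C_g^{\Lao} \Vert = \Vert \modft_g^B \Vert$, and running the computation of \cref{Remark: Norm of frame operator with inner products} with the two bullet-inner products interchanged yields $\Vert \modft_g^B \Vert = \Vert \bbracket g,g] \Vert$.

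Both Bessel bounds therefore equal $\Vert \bbracket g,g] \Vert$, equivalently $\Vert [g,g\bracketb \Vert$, the two matrix-valued inner product norms agreeing by the equivalence bimodule norm identity of \cite{rawi98} applied to $M_{n,d}(E)$; hence they coincide. The main obstacle is entirely the $\Lao$-side bookkeeping: one must check that the localization of $\modan_g^B$ really is the coefficient operator of the $(d,n)$-interpretation (this is where the interchange of $\Z_n$ and $\Z_d$ and the passage from $\pi(\la)$ to $\pi(\lao)^*$ enter), and that both the isometric-localization statement of \cref{prop:bimodule_localization} and the norm computation of \cref{Remark: Norm of frame operator with inner products} go through with the left and right module structures swapped, so that $\Vert (C_g^{\Lao})^* C_g^{\Lao} \Vert = \Vert \modft_g^B \Vert = \Vert \bbracket g,g] \Vert$. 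Once these are in place, the equality of the two bounds is immediate.
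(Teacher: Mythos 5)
Your proposal is correct and follows essentially the same route as the paper: both arguments identify the optimal Bessel bound on the $\La$-side with $\Vert S_g\Vert = \Vert\modft_g\Vert = \Vert\bbracket g,g]\Vert$ via the isometric localization and the computation of \cref{Remark: Norm of frame operator with inner products}, and then repeat the argument on the $\Lao$-side using the right-module frame operator $S_g^B f = \bbracket g,g] f$ to land on the same quantity. The only difference is cosmetic — you spell out the index-transposition bookkeeping that the paper dismisses with ``the analogous argument works with $\Lao$''.
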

\begin{proof} 
	It suffices to prove that the optimal Bessel bounds agree. Let $D_{\La}$ be the optimal Bessel bound for $\GS (g;\La)$, and let $D_{\Lao}$ be the optimal Bessel bound for $\GS(g;\Lao)$. By \cref{Proposition: Modular frame and Gabor frame same bounds} and \cref{Remark: Norm of frame operator with inner products} it follows that $D_{\La} = \Vert S_g \Vert = \Vert \bbracket g,g ] \Vert$. But the analogous argument works with $\Lao$ instead of $\La$, since the important part for the setup with localization as done in this paper is that $\La$ or $\Lao$ is cocompact. Indeed, this is really a consequence of \cref{Localization norm preserved}. 
	Hence we may just as well apply the frame operator $S_g^B$, which is given by the continuous extension of multiplication by $\bbracket g,g]$ on the left. That is, for all $f \in M_{n,d}(E)$ we have
	\begin{equation*}
		S_g^B f = \bbracket g,g] f.
	\end{equation*}
	An analogous argument to the one in \cref{Remark: Norm of frame operator with inner products} will show that $D_{\Lao} = \Vert \bbracket g,g] \Vert$, 
	and hence $D_{\La}= D_{\Lao}$. 
\end{proof}

\section*{Acknowledgement}
The authors wish to thank Ulrik Enstad for his valuable input and for finding an error in an earlier draft of the manuscript. We would also like to thank the Erwin Schr\"odinger Institute for their hospitality and support since part of this research was conducted while the authors attended the program ''Bivariant K-theory in Geometry and Physics".
\bibliographystyle{plain}
\bibliography{CTFA}

\end{document}